\def\figurename{Figure} 
\renewcommand{\fnum@figure}[1]{\figurename~\thefigure.}
\def\tablename{Table} 
\renewcommand{\fnum@table}[1]{\tablename~\thetable.}
\newtheorem{theorem}{Theorem}[section]
\newtheorem{corollary}[theorem]{Corollary}
\newtheorem{proposition}[theorem]{Proposition}
\theoremstyle{definition}
\newtheorem{definition}[theorem]{Definition}
\newtheorem{example}[theorem]{Example}
\theoremstyle{remark}
\newtheorem{remark}[theorem]{Remark}
\numberwithin{equation}{section}
\def\P{\mathbb P}
\def\R{\mathbb R}
\def\E{\mathbb E}
\def\E{\mathbb E}
\def\N{\mathbb N}
\begin{document}
\title{\textbf{Probabilistic representation of the parabolic stochastic variational inequality with Dirichlet-Neumann boundary and variational generalized backward doubly stochastic differential equations}\footnote{The work of Yong Ren is supported by the National
Natural Science Foundation of
 China (No 11371029),
 the Distinguished Young Scholars of Anhui Province (No 1508085JGD10). The work of Qing Zhou is supported by the National Natural
Science Foundation of China (No 11001029). The work of Auguste Aman is supported by TWAS Research Grants to individuals (No. 09-100 RG/MATHS/AF/\mbox{AC-I}--UNESCO FR: 3240230311)}
}

\author{\textbf{ Yong Ren}$^1$\footnote{e-mail: renyong@126.com},\;\;\ \
\textbf{ Auguste Aman}$^2$\footnote{Corresponding author e-mail: augusteaman5@yahoo.fr}\;\,, \ \ \textbf{ Qing Zhou}$^3$ \footnote{ e-mail: zqleii@gmail.com}\\
\noindent\small 1. Department of Mathematics, Anhui Normal University, Wuhu
241000, China\;\;\;\;\;\;\;\;\;\;\;\;\;\;\;\;\;\;\;\;\;\;\;\;\;\;\;\\
\noindent\small 2. U.F.R Mathematiques et Informatique, Universit\'{e} Félix H. Boigny de
Cocody, C\^{o}te d'Ivoire\;\;\;\;\; \\
\noindent\small 3. School of Science, Beijing University of Posts and Telecommunications, Beijing 100876, China} \;
\date{}
\maketitle

\begin{abstract}
We derive the existence and uniqueness of the generalized backward doubly stochastic differential equation with subdifferential of a lower semicontinuous convex function under a non-Lipschitz condition. This study allows us to give a probabilistic representation (in the stochastic viscosity sense) to the parabolic  variational stochastic partial differential equations with Dirichlet-Neumann conditions.
\end{abstract}

\noindent {\bf 2000 MR Subject Classification} 60H15, 60H20

\vspace{.08in} \noindent \textbf{Keywords} Backward doubly
stochastic differential equation, subdifferential operator,
Yosida approximation, Neumann-Dirichlet boundary conditions,
stochastic viscosity solution

\section{Introduction}
Since the pioneering work of Pardoux and Peng in \cite{PP}, many others work on the connection between backward doubly stochastic differential equations (BDSDEs, for short) and semi linear parabolic and elliptic systems of second-order stochastic partial differential equations (SPDEs, for short) have been established. Among these, the notion of stochastic viscosity solution for semi linear SPDEs has been introduced firstly by Lions and Souganidis in \cite{LS1,LS2}. Their idea is to remove the stochastic integral from a SPDEs using a so-called "stochastic characteristic". Years later, two others definitions of stochastic viscosity solutions of SPDEs have been considered, respectively, in \cite{BM1,BM2,BMa} using the "Doss-Sussman" transformation to  remove the stochastic integrals from a SPDEs. Using this similar approach, Boufoussi et al. \cite{Bal} derive the stochastic viscosity solution of SPDEs with nonlinear Dirichlet-Neumann boundary condition via a so-called generalized BDSDEs.

Let $\Theta$ be an open connected and smooth bounded subset of $\R^d$. More precisely, we suppose that for a function $\phi\in C^2((\R^d),\Theta$ and its boundary $Bd(\Theta)$ are characterized by 
\begin{eqnarray*}
\Theta=\{x\in \R^d:\;\;\phi(x)>0\},
\end{eqnarray*}
\begin{eqnarray*}
Bd(\Theta)=\{x\in\R^d:\;\;\phi(x)=0\}.
\end{eqnarray*}
and, for any $x\in Bd(\Theta), \nabla\phi(x)$ is the unit normal vector pointing towards the interior of $\Theta$.
  
Given continuous mappings: $\sigma:\R^d\to \R^{d\times d},\; b: \R^d \to \R^{d}$, we consider reflected stochastic differential equations (RSDEs, in short)
\begin{eqnarray}\label{equation011}
X_s^{t,x}&=&x+\int^s_t b(X^{t,x}_r)dr +\int_t^s \nabla \phi(X^{t,x}_r)dA_r^{t,x}+\int_t^s\sigma(X^{t,x}_r)dW_r
\end{eqnarray}
and its associated second-order differential operators
 \begin{eqnarray}\label{Dop}
L=\sum_{i,j=1}^{d}(\sigma\sigma*)_{ij}(x)\frac{\partial^2 }{\partial x_i\partial x_j}+\sum_{i=1}^{d}b_i(x)\frac{\partial}{\partial x_i},\; x\in \Theta
\end{eqnarray}
and 
\begin{eqnarray}\label{Dop1}
\Gamma =\sum_{i=}^{d}\frac{\partial \phi}{\partial x_i}(x)\frac{\partial }{\partial x_i}, \; x\in Bd(\Theta)
\end{eqnarray} 

Next, for some function $f, g:\Omega\times[0,T]\times\overline{\Theta}\times\R \to \R,\; \chi: \R^d\to  \R$, we consider the following second-order semilinear parabolic SPDE
\begin{eqnarray}
\left\{
\begin{array}{lll}
\frac{\partial u}{\partial t}(t,x)+Lu(t,x)+f(t,x,u(t,x))+h(t,x,u(t,x))\frac{\overleftarrow{dB}_t}{\partial t}=0, 
(t,x)\in [0,T]\times \Theta,\\\\
\Gamma u(t,x)+g(t,x,u(t,x))=0, \
(t,x)\in [0,T]\times Bd(\Theta),\\\\
u(T,x)=\chi(x),\;\;x\in\overline{\Theta}.
\end{array}\right.,
\label{i2}
\end{eqnarray}
where a $\frac{\overleftarrow{dB}_t}{\partial t}$ denotes the "white noise" defined formally as the derivative of the Brownian motion $B$.

In the case $h\equiv 0$ and the functions $f, g$ are deterministic, SPDE \eqref{i2} becomes a well-known PDE studied by Pardoux and Zhang \cite{PZ}.

Furthermore,  Boufoussi et al. in \cite{Bal} prove that $u$, defined by
\begin{eqnarray*}
	u(t,x)=Y^{t,x}_t,\; \mbox{for all} \; (t,x)\in[0,T]\times\Theta,
\end{eqnarray*}
where $\{Y^{t,x}_s, \; s\in [t,T]\}$ satisfied the following generalized backward doubly stochastic differential equation (GBDSDE, for short)
\begin{eqnarray}\label{equation01}
Y_s^{t,x}&=&\chi(X^{t,x}_T)+\int^T_sf(r,X^{t,x}_r,Y^{t,x}_r,Z^{t,x}_r)dr +\int_s^Tg(r,X^{t,x}_r,Y^{t,x}_r)dA_r^{t,x}\nonumber\\
&&+\int_s^T h(r,X_r^{t,x},Y^{t,x}_r,Z^{t,x}_r)\overleftarrow{dB}_r-\int_s^TZ^{t,x}_rdW_r,
\end{eqnarray}
is the stochastic viscosity solution of \eqref{i2}.

In certain applications, the solution of \eqref{i2} need to be maintained in non-empty closed convex real sets $D_1$ (resp. $D_2$) for all $x\in \Theta$ (resp. $x\in Bd(\Theta)$. This constraint requires the addition of sources $\partial{\mathbb I}_{D_1}(u(t,x))$ and $\partial{\mathbb I}_{D_2}(u(t,x))$ which design respectively the sub-differential of the convex indicators functions ${\mathbb I }_{D_1}$ and ${\mathbb I}_{D_2}$ defined by
\begin{eqnarray*}
{\mathbb I }_{D_i}(y)=
\left\{
\begin{array}{lll}
0& \mbox{if}& y\in D_i\\
+\infty &\mbox{if}& y\notin D_i,\;\; i=1,2
\end{array}\right.
\end{eqnarray*}
These sources inward pushes process $u(t,x)$ in $D_1$, for all $x\in\Theta$ and in $D_2$, for all $x\in Bd(\Theta)$ in a minimal way (i.e. only when $u(t, x)$ arrives respectively on the boundary of $D_1$ and $D_2$) such that $\partial{\mathbb I }_{D_1}(u(t,x))$ and $\partial{\mathbb I }_{D_2}(u(t,x))$ represent perfect feedback flux controls.

In this paper, we study an existence result for this parabolic variational inequality with mixed nonlinear Neumann-Dirichlet boundary condition:

\begin{eqnarray}
\left\{
\begin{array}{lll}
\Big(\frac{\partial u}{\partial t}(t,x)+Lu(t,x)+f(t,x,u(t,x))+h(t,x,u(t,x))\frac{\overleftarrow{dB}_t}{\partial t}\Big)\in\partial \varphi(u(t,x)), 
(t,x)\in [0,T]\times \Theta,\\\\
\Gamma u(t,x)+g(t,x,u(t,x))\in \partial \psi(u(t,x)), \
(t,x)\in [0,T]\times Bd(\Theta),\\\\
u(T,x)=\chi(x),\;\;x\in\overline{\Theta},
\end{array}\right.
\label{i1}
\end{eqnarray}
where $\partial\varphi$ and $\partial\psi$ are sub-differential operators of the convex function $\varphi$ and $\psi$.

The special case (when $h\equiv 0$) of SPDE \eqref{i1} corresponds to now well-know variational PDE with Dirichlet-Neumann boundary condition studied in \cite{MR}.

Our method is purely probabilistic and use a Markovian version of this following variational generalized backward doubly stochastic differential equations (VGBDSDEs, in short)
\begin{eqnarray}\label{equation1}
\left\{
\begin{array}{lll}
dY_t+f(t,Y_t,Z_t)dt+g(t,Y_t)dA_t+h(t,Y_t,Z_t)\overleftarrow{dB}_t\in \partial
\varphi(Y_t)dt+\partial \psi(Y_t)dA_t+Z_tdW_t&\\\\
Y_T=\xi, &
\end{array}\right.
\end{eqnarray}
where $\xi$ is the terminal value and $A$ denotes a continuous one-dimensional increasing adapted process such that $A_0=0$.

This study is done under "{\bf Main assumption}" introduced by X. Mao in \cite{XM} and defined as follows:
 \begin{description}
 \item
 There exist two constants $K>0$ and $0<\alpha <1$
such that
\begin{equation*}
\left\{
\begin{array}{rll}
\mid f(t,y_{1},z_{1})-f(t,y_{2},z_{2})\mid ^{2}\leq \rho (\mid
y_{1}-y_{2}\mid ^{2})+K\mid \mid z_{1}-z_{2}\mid \mid ^{2}\vspace{0.2cm} &
\\\\
\mid \mid h(t,y_{1},z_{1})-h(t,y_{2},z_{2})\mid \mid ^{2}\leq \rho (\mid
y_{1}-y_{2}\mid ^{2})+\alpha \mid \mid z_{1}-z_{2}\mid \mid ^{2},&
\end{array}%
\right.
\end{equation*}
where $\rho (.)$ is continuous, concave and non-decreasing function from $\R^{+}$ to $\R^{+}$ such that $\rho(0)=0$, for all $u>0,\; \rho(u)>0$ and 
\begin{eqnarray*}
\int_{0^{+}}\frac{du}{\rho(u)}=+\infty.
\end{eqnarray*}
\end{description}
Several kinds of BSDEs have been study under this assumption and it relaxed version (see Owo and N'zi \cite{ON}, Wand and Wang \cite{WW}, Wang and Huang \cite{WH}).

Furthermore, it is now well-know that the continuity of the map $(t,x)\mapsto Y^{t,x}$ plays an important role for viscosity solution of SPDE \eqref{i1}. In this paper, we prove this result in Proposition 3.11. It can be view as a stochastic version of continuity result established by Pardoux and  R\u{a}\c{s}canu in \cite{PR1}. Since the solution $u$ of \eqref{i1} is a stochastic fields, we use weak convergence to derive this continuity result.  

The rest of this paper is organized as follows. In Section 2, we study VGBDSDEs \eqref{equation1} under "{\bf main assumptions}". The last section derives and establishes stochastic viscosity solution for a one dimensional variation stochastic Dirichlet-Neumann problems in the special case (Lipschitz coefficients).

\section{Variational Generalized Backward Doubly SDEs}
This section aims to study VGBDSDEs \eqref{equation1} under above assumptions. In subsection 2.1, we give the statement of our study. We derive existence and uniqueness result in subsection 2.2 and subsection 2.3 is devoted to comparison theorem.

\subsection{ Preliminaries and notations}
For a final time $T>0$, we consider $\{W_{t}; 0\leq t\leq
T \}$ and $\{B_{t}; 0\leq t\leq T \}$ two standard Brownian motion defined respectively on complete probability spaces $(\Omega_1,\mathcal{F}_1,\P_1)$ and $(\Omega_2,
\mathcal{F}_2,\P_2)$ with respectively $\mathbb{R}^{d}$ and $\mathbb{R}^{l}$ values. For any process $\{K_{t},\, t\in[0,T]\}$, we define the following family of $\sigma$-algebra $\mathcal{F}_{s,t}^{K}=\sigma\{K_{r}-K_{s},\, s\leq r \leq t \}$. In particular, $\mathcal{F}_{t}^{K}=\mathcal{F}_{0,t}^{K}$. Next, we consider the product space $(\Omega, \mathcal{F},\P)$, where
$$\Omega=\Omega_{1}\times\Omega_{2},\ \mathcal{F}=\mathcal{F}_{1}\otimes\mathcal{F}_{2}, \ \P=\P_{1}\otimes\P_{2}$$
and $\mathcal{F}_{t}=\mathcal{F}_{t}^{W} \otimes \mathcal{F}_{t,T}^{B}$. We should note that since $\textbf{\textit{F}}^{W}=(\mathcal{F}_{t}^W)_{t\in [0,T]}$ and $\textbf{\textit{F}}_{}^{B}=(\mathcal{F}_{t,T}^B)_{t\in [0,T]}$ are respectively increasing and decreasing filtration, the collection $\textbf{\textit{F}}_{}^{}=(\mathcal{F}_{t})_{t\in[0,T]}$ is neither increasing nor decreasing such that it is not a filtration. Further, for random variables $\zeta$ and $\pi$ defined respectively in $\Omega_1$ and $\Omega_2$ are viewed as random variables on $\Omega$ via the following identification:
$$
\zeta(\omega)=\zeta(\omega_1);\;\;\;  \pi(\omega)=\pi(\omega_2), \;\;  \omega=(\omega_1,\omega_2).
$$
Let
$\mathcal{M}^{2}(\textbf{\textit{F}},[0,T];\mathbb{R}^{d\times k})$ denote the set of $d\P\otimes dt$ a.e. equal and $(d\times k)$-dimensional jointly measurable
random processes  $\{\varphi_{t}; 0\leq t\leq T \}$  such that
\begin{enumerate}
\item[(i)] $\displaystyle \|\varphi \|_{\mathcal{M}^{2}}^{2}=\mathbb{E}\left(\int_{0}^{T}\| \varphi_{t}
\|^{2} dt\right)< +\infty$,

\item[(ii)] $\varphi_{t}$ is $\mathcal{F}_{t}$-measurable, for
a.e. $t \in [0,T].$
\end{enumerate}
We denote by  $\mathcal{S}^{2}(\textbf{\textit{F}},[0,T];\mathbb{R}^k)$
the set of continuous $k$-dimensional random processes such that
\begin{enumerate}
\item[(i)] $\|\varphi \|_{\mathcal{S}^{2}}^{2}=\mathbb{E}(\underset{0\leq t\leq T}
{\sup} \mid \varphi_{t}\mid^{2})< +\infty$,

\item[(ii)] $\varphi_{t}$ is $\mathcal{F}_{t}$-measurable, for any
$t \in [0,T]$.
\end{enumerate}
We denote also by $\mathcal{A}^{2}(\textbf{\textit{F}},[0,T];\mathbb{R}^k)$ the set of $d\P\otimes dA_t$ a.e. equal and $k$-dimensional jointly measurable random processes  $\{\varphi_{t}; 0\leq t\leq T \}$ such that
\begin{enumerate}
\item[(i)] $\displaystyle \|\varphi \|_{\mathcal{A}^{2}}^{2}=\mathbb{E}\left(\int_{0}^{T}| \varphi_{t}
|^{2} dA_t\right)< +\infty$,

\item[(ii)] $\varphi_{t}$ is $\mathcal{F}_{t}$-measurable, for
a.e. $t \in [0,T].$
\end{enumerate}
In the sequel, for simplicity, we shall set $\mathcal{S}^{2}(\R^k),\,\mathcal{M}^{2}(\R^{d\times k})$ and $\mathcal{A}^{2}(\R^k)$ instead of \newline$\mathcal{S}^{2}(\bf{F},[0,T];\R^k)$, $\mathcal{M}^{2}({\bf F},[0,T],\R^{d\times k})$ and $\mathcal{A}^{2}({\bf F},[0,T],\R^k)$ respectively.
 
The coefficients $f:\Omega \times [0,T]\times \mathbb{R}^k \times \mathbb{R}^{
d\times k}\rightarrow \mathbb{R}^k$, $h:\Omega \times [0,T]\times \mathbb{R}^k
\times \mathbb{R}^{ d\times k}\rightarrow \mathbb{R}^{l\times k}$, $g:\Omega \times [0,T]\times \mathbb{R}^k\rightarrow \mathbb{R}^k$, the terminal value $\xi: \Omega \rightarrow \mathbb{R}^k $ and  $\varphi, \psi:\mathbb{R}^k\rightarrow \R$ satisfy the following conditions:
\begin{description}
\item [(H1)]$\xi$ is a $\mathcal{F}_T$-measurable random variable such that $\E(e^{\mu A_T}|\xi|^2+\varphi(\xi)+\psi(\xi))<+\infty$, for any $\mu>0$
\item [(H2)] There exist three constants $K> 0,\; 0 <\alpha < 1$ and $\beta\in\R$ such that   
\begin{eqnarray*}
\left\{
\begin{array}{lll}
(i)&  f(.,0,0) \in\mathcal{M}^{2}(\R^k), \ \ h(.,0,0) \in
\mathcal{M}^{2}(\R^{l\times k}),\; g(s,0)\in \mathcal{A}^{2}(\R^k),&\\\\ 
(ii)&\mid f(t,y_{1},z_{1})-f(t,y_{2},z_{2}) \mid^{2} \leq \rho(\mid
y_{1}-y_{2}\mid^{2})+K\mid \mid z_{1}-z_{2}\mid \mid^{2},&
\\\\
(iii)&\mid \mid h(t,y_{1},z_{1})-h(t,y_{2},z_{2}) \mid \mid^{2} \leq \rho(\mid
y_{1}-y_{2}\mid^{2})+\alpha \mid \mid z_{1}-z_{2}\mid \mid^{2},& \\\\
(iv)&\langle y_1-y_2,g(t,y_1)-g(t,y_2)\rangle\leq \beta |y_1-y_2|^{2},&
\end{array}
\right.,
\end{eqnarray*}
for all $(t, y_{i},z_{i})\in [0,T]\times\mathbb{R}
\times \mathbb{R}^{d}$ $i=1,2$, and $\rho$ continuous, concave and non-decreasing function from $\R^{+}$ to $\R^{+}$ satisfies $\rho(0)=0,\; \rho(u)>0$ for all $u>0$ and 
\begin{eqnarray}
\int_{0^{+}}\frac{du}{\rho(u)}=+\infty.\label{A}
\end{eqnarray}

\item[(H3)] There exist constants $K',\mu >0$ and function $\gamma,\,\eta:[0,+\infty[\times\Omega\rightarrow[0,+\infty[$
 \begin{eqnarray*}
\left\{
\begin{array}{lll}
(i)&\mid f(t,y,z)\mid\leq \gamma_t+K'(|y|+\|z\|),&
\\\\
(ii)& | g(t,y)|\leq \eta_t+K'|y|,&
\\\\
(iii)& \displaystyle \E\left(\int^T_0e^{\mu A_T}|\gamma_t|^2dt + \int^T_0 e^{\mu A_T}|\eta_t|^2dA_t\right),
\end{array}
\right.
\end{eqnarray*}

for all $(t, y,z)\in [0,T]\times\mathbb{R}\times \mathbb{R}^{d}$.
\item [(H4)] The functions $\varphi$ and $\psi$ are proper convex and lower semi-continuous  such that 
$\varphi(y)\geq \varphi(0)=0,\ \psi(y)\geq \psi(0)=0 $.
\end{description}

\begin{remark}
\begin{itemize}
\item[(i)] Lipschitz condition on generators $f,\, h$ with respect to the variable $y$ is the special case of $(\bf H2)$ with $\rho(u)=Ku$. However, "{\bf Main assumption}" can be elaborated with other functions outside the Lipschitz case. For example $\rho_1$ and $\rho_2$ defined respectively by
\begin{eqnarray*}
\rho_1(u)=\left\{
\begin{array}{ll}
u\log(u^{-1}),& 0\leq u\leq \delta,\\\\
\delta\log(\delta^{-1})+\kappa_1(\delta)(u-\delta),& u>\delta
\end{array}\right. 
\end{eqnarray*}
and 
\begin{eqnarray*}
\rho_2(u)=\left\{
\begin{array}{ll}
u\log(u^{-1})\log(\log(u)),& 0\leq u\leq \delta,\\\\
\delta\log(\delta^{-1})\log(\log(\delta))+\kappa_2(\delta)(u-\delta),& u>\delta,
\end{array}\right. 
 \end{eqnarray*}
 for any $\delta\in (0,1)$, are perfect candidates.
\item [(ii)] In classical BDSDEs case, i.e $\varphi={\bf 1}_{\R}$ and $g=\psi=0$, it was explained in \cite{PP} (Remark 3.3), why the condition $(\bf H3)$-$(iii)$ with $0<\alpha <1$, is a very natural condition for many particular situations, e.g., if the coefficient $h$ is independent on $y$ and linear with respect to $z$.
\item[(iii)] If $g$ satisfies $(\bf H2 )$-$(iv)$ then, by changing the solutions and the coefficients $f , g$ and $h$, we may and do suppose that $h$ satisfies a stronger condition of the form
\begin{eqnarray}
	 \langle y_1-y_2,g(t,y_1)-g(t,y_2)\rangle\leq \beta |y_1-y_2|^{2}, \; \beta<0.
\end{eqnarray}\label{Neg}
Indeed, $(Y_t,Z_t)$ solves VGBDSDE \eqref{equation1} if and only if for some $\mu>0$ the pair $(\bar{Y}_t, \bar{Z}_t ) = (e^{\mu A_t} Y_t , e^{\mu A_t}Z_t)$ solves an analogous VGBDSDE, with $f , g$ and $h$ replaced respectively by
\begin{eqnarray*}
	\bar{f}(t,y,z)&=& e^{\mu A_t} f(t, e^{-\mu A_t}y,e^{-\mu A_t}z),\\
	\bar{h}(t,y,z)&=& e^{\mu A_t} h(t, e^{-\mu A_t}y,e^{-\mu A_t}z),\\
	\bar{g}(t,y)&=& e^{\mu A_t} g(t, e^{-\mu A_t}y)-\mu y.
\end{eqnarray*}
Then we can always choose $\mu$ such that the function $\bar{g}$ satisfies $(\bf H2 )$-$(iv)$ with a strictly negative $\beta$. 
\end{itemize}
\end{remark}

In order to understand the notion of Yosida approximation of sub-differential operator of functions $\varphi$ (resp. $\psi$), we define
\begin{eqnarray*}
\begin{array}{l}
Dom(\theta)=\left \{u\in \mathbb{R}^k :\theta(u)<+\infty
\right\},\\\\
\partial \theta(u)= \{u^\ast\in
\mathbb{R}^k:\left<u^\ast,v-u\right>+\theta(u)\leq \theta(v),\
\mbox{for all}\ v\in \mathbb{R}^k\},\\\\
{Dom}(\partial \theta)= \{u\in \mathbb{R}^k:\partial \theta(u)
\neq \emptyset\}.
\end{array}
\end{eqnarray*}
$\partial\theta$, called sub-differential operator of $\theta$, is a maximal monotone operator, which means that
\begin{eqnarray*}
\langle u-v,u^*-v^*\rangle\geq 0,\;\; \forall\; (u,u^*),\,(v,v^*)\in\partial\theta.
\end{eqnarray*}
Moreover we have 
\begin{eqnarray*}
	(u,u^*)\in\partial \theta \Leftrightarrow u\in {\rm Dom} (\theta),\; u^\ast \in\partial\theta(u).
\end{eqnarray*}
 Example, for $\theta:\R\to ]-\infty,+\infty]$ and $y\in Dom(\theta)$ we have
\begin{eqnarray*}
\partial\theta(y)=\R\cap [\theta'_{l}(y),\theta'_{r}(y)]	,
\end{eqnarray*}
where $\theta'_{l}(y)$ and $\theta'_{r}(y)$ denote respectively the left and right derivative  at $y$.
\begin{remark}
Assumption $(\bf H4)$ is not really a restriction. It is generally realized by changing problem form. For $u_0\in Dom(\varphi)$ (resp. $u_0\in Dom(\psi)$), if $(u_0, u^{*}_{0})\in\partial\varphi$ (resp. $(u_0, u^{*}_{0})\in\partial\psi$) we can replace $\varphi(u)$ (resp. $\psi(u)$) by $\varphi(u+u_0)-\varphi(u_0)-\langle u^{*}_0, u\rangle$ (resp. $\psi(u+u_0)-\psi(u_0)-\langle u^{*}_0, u\rangle$).
\end{remark}
\begin{definition}[see \cite{Br} and the references therein]
Let consider $(\theta_\varepsilon)_{\varepsilon>0}$, the family of function  defined by \begin{eqnarray*}
\theta_\varepsilon(x)=\min_{y}\left(\frac{1}{2\varepsilon}|x-y|^2+\theta(y)\right)=
\frac{1}{2\varepsilon}\left|x-J_\varepsilon(x)\right|^2+\theta(J_\varepsilon(x)),
\end{eqnarray*}
where $J_{\varepsilon}(x)=(I+\varepsilon\partial \theta)^{-1}(x)$ is called the resolvent of the monotone operator $\partial\theta$. The Yosida approximation of the sub-differential operator $\partial\theta$ is the family of function $(\theta_{\varepsilon})_{\varepsilon>0}$ defined by
\begin{eqnarray*}
\nabla\theta_{\varepsilon}(x)=\frac{x-J_{\varepsilon}(x)}{\varepsilon}.
\end{eqnarray*} 
\end{definition}
The following results come from \cite{Br} or \cite{PR}.
\begin{proposition}\label{pro2.1}
\begin{itemize}
\item[(i)] The function $\theta_\varepsilon$ is convex with a Lipschitz continuous derivatives;
\item[(ii)] for all $ x\in \mathbb{R}^k,
\nabla\theta_\varepsilon(x)=\partial
\theta_\varepsilon(x)=\frac{1}{\varepsilon}\left(x-J_\varepsilon(x)\right)\in
\partial \theta(J_\varepsilon(x))$;
\item[(iii)] for all $ x,y\in \mathbb{R}^k,
|\nabla\theta_\varepsilon(x)-\nabla\theta_\varepsilon(y)|\leq
\frac{1}{\varepsilon}
|x-y|$;
\item[(iv)] for all $ x,y\in \mathbb{R}^k,
\left<\nabla\theta_\varepsilon(x)-\nabla\theta_\varepsilon(y),x-y\right>\geq
0$;
\item[(v)] for all $x,y\in\mathbb{R}^k\;\mbox{and} \;
\varepsilon,\delta>0$,
$\left<\nabla\theta_\varepsilon(x)-\nabla\theta_\delta(y),
x-y\right>\geq-\left(\varepsilon+\delta\right)\left<\nabla\theta_\varepsilon(x),\nabla\theta_\delta(y)\right>$.
\end{itemize}
\end{proposition}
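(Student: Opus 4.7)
The approach is to exploit the fact that $\theta$ is proper, convex, and lower semi-continuous, which guarantees that the inf-convolution defining the Moreau-Yosida envelope $\theta_\varepsilon$ is attained at a unique point; this point is precisely the resolvent $J_\varepsilon(x)$. Strong convexity of $y\mapsto \frac{1}{2\varepsilon}|x-y|^2 + \theta(y)$ gives uniqueness, and coercivity (since $\theta$ is bounded below by an affine function) gives existence of the minimizer. I would begin by establishing this attainment, then derive every remaining item from Fermat's rule and the firm nonexpansiveness of $J_\varepsilon$.

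For item (ii), writing the optimality condition $0\in\partial_y\bigl[\tfrac{1}{2\varepsilon}|x-y|^2+\theta(y)\bigr]$ at $y=J_\varepsilon(x)$ immediately yields $\varepsilon^{-1}(x-J_\varepsilon(x))\in\partial\theta(J_\varepsilon(x))$. For item (i), convexity of $\theta_\varepsilon$ follows from its expression as an infimal convolution of two convex functions, and differentiability plus the gradient formula $\nabla\theta_\varepsilon(x)=\varepsilon^{-1}(x-J_\varepsilon(x))$ follows by inserting a small perturbation $x+tv$ into the envelope and applying the envelope theorem (using uniqueness of the minimizer). For item (iii), the key is the firm nonexpansiveness of $J_\varepsilon$: using (ii) twice and monotonicity of $\partial\theta$ applied to the pairs $(J_\varepsilon(x),\nabla\theta_\varepsilon(x))$ and $(J_\varepsilon(y),\nabla\theta_\varepsilon(y))$ yields
\begin{eqnarray*}
\|\nabla\theta_\varepsilon(x)-\nabla\theta_\varepsilon(y)\|^2 \leq \varepsilon^{-1}\langle \nabla\theta_\varepsilon(x)-\nabla\theta_\varepsilon(y),\,x-y\rangle,
\end{eqnarray*}
and Cauchy-Schwarz then gives the $\varepsilon^{-1}$-Lipschitz bound. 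Item (iv) is contained in the inequality just displayed.

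Item (v) is the most delicate; I would handle it by the decomposition
\begin{eqnarray*}
x-y = \bigl[x-J_\varepsilon(x)\bigr]+\bigl[J_\varepsilon(x)-J_\delta(y)\bigr]-\bigl[y-J_\delta(y)\bigr]=\varepsilon\nabla\theta_\varepsilon(x)+\bigl[J_\varepsilon(x)-J_\delta(y)\bigr]-\delta\nabla\theta_\delta(y).
\end{eqnarray*}
Taking the inner product with $\nabla\theta_\varepsilon(x)-\nabla\theta_\delta(y)$ and invoking monotonicity of $\partial\theta$ (to discard the middle term) produces
\begin{eqnarray*}
\langle \nabla\theta_\varepsilon(x)-\nabla\theta_\delta(y),\,x-y\rangle \geq \varepsilon\|\nabla\theta_\varepsilon(x)\|^2+\delta\|\nabla\theta_\delta(y)\|^2-(\varepsilon+\delta)\langle \nabla\theta_\varepsilon(x),\nabla\theta_\delta(y)\rangle,
\end{eqnarray*}
from which the claimed inequality follows by dropping the two non-negative square terms.

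The main obstacle is not any single computation but establishing cleanly that $\theta_\varepsilon$ is actually $\mathcal{C}^1$ with the stated gradient formula; once differentiability is in hand, items (iii)-(v) are consequences of the monotonicity of $\partial\theta$ and the firm nonexpansiveness of the resolvent, which are themselves standard consequences of the variational characterization of $J_\varepsilon$. Since the proposition is attributed to \cite{Br,PR}, citing those references for the differentiability step and carrying out only the monotonicity-based algebraic manipulations for (iii)-(v) would be the most economical presentation.
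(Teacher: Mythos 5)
Your proof is correct in all five items: the attainment and uniqueness of the minimizer defining $J_\varepsilon$, the Fermat-rule derivation of (ii), the firm-nonexpansiveness inequality $\varepsilon\|\nabla\theta_\varepsilon(x)-\nabla\theta_\varepsilon(y)\|^2\leq\langle\nabla\theta_\varepsilon(x)-\nabla\theta_\varepsilon(y),x-y\rangle$ giving (iii) and (iv) at once, and the three-term decomposition of $x-y$ with monotonicity discarding the middle term for (v) all check out. The paper itself offers no proof of this proposition: it states the results and refers the reader to \cite{Br} and \cite{PR}, so there is no argument in the text to compare yours against. Your write-up is precisely the standard argument contained in those references, and your closing suggestion --- cite the references for the $\mathcal{C}^1$ regularity of $\theta_\varepsilon$ and carry out only the monotonicity algebra for (iii)--(v) --- is in fact more than the authors chose to include.
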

Let us now give the definition of an adapted solution to our VGBDSDEs. 
\begin{definition}\rm \label{def2.1}
The quadruplet of processes $(Y,U,V,Z)$ is called a solution of VGBSDE \eqref{equation1} if
\begin{itemize} 
\item[(i)] $(Y,Z,U,V)\in\mathcal{S}^{2}(\R^k)\times\mathcal{M}^{2}(\R^{d\times k})\times\mathcal{M}^{2}(\R^{k})\times\mathcal{A}^{2}(\R^k)$
\item[(ii)]$(Y_t,U_t)\in \partial \varphi,\;\; d\P\otimes dt$-a.e and $(Y_t,V_t)\in \partial \psi, \;\; d\P\otimes dA_t$;
\item[(iii)]
\begin{eqnarray}\label{Eqdef}
Y_t+\int_t^T U_sds+\int_t^T V_s dA_s &=&\xi+\int_t^Tf(s,Y_s,Z_s)ds+\int_t^Tg(s,Y_s)dA_s\nonumber\\
&&+\int_t^T h(s,Y_s,Z_s)\overleftarrow{dB}_s-\int_t^T Z_s dW_s,\,\;\;\;  0\leq t \leq T.
\end{eqnarray}
\end{itemize}
\end{definition}
To end this subsection, let us give Bihari's inequality which is useful in the proof of our existence and uniqueness result.
\begin{proposition}
Let $u$ and $f$ be non-negative continuous functions defined on $[0, +\infty)$, and let $w$ be a continuous non-decreasing function defined on $[0,+\infty)$ and $w(u) > 0$ on $(0, +\infty)$. If $u$ satisfies the following integral inequality,
\begin{eqnarray*}
u(t)\leq \alpha+\int^t_0f(s)w(u(s))ds
\end{eqnarray*}
where $\alpha$ is a non-negative constant, then
\begin{eqnarray*}
u(t)\leq G^{-1}\left(G(\alpha)+\int^t_0f(s)ds\right),
\end{eqnarray*}
with the function $G$ is defined by
\begin{eqnarray*}
G(x)=\int_{x_0}^{x}\frac{1}{w(y)}dy, \;\;\;\; x>0,\;\; x_0>0.
\end{eqnarray*}
\end{proposition}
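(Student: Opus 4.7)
The plan is to reduce the integral inequality to a separable differential inequality by introducing an auxiliary majorant, and then to exploit the definition of $G$ to invert the resulting bound.

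First, I would define $\Phi(t):=\alpha+\int_0^t f(s)\,w(u(s))\,ds$, so that by hypothesis $u(t)\le \Phi(t)$ for every $t\ge 0$; moreover $\Phi$ is continuously differentiable with $\Phi'(t)=f(t)\,w(u(t))$ and $\Phi(0)=\alpha$. Since $w$ is non-decreasing and $u\le \Phi$, monotonicity upgrades this to $\Phi'(t)\le f(t)\,w(\Phi(t))$, turning the integral bound into a pointwise differential inequality on $\Phi$.

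Next, I would divide by $w(\Phi(t))$, which is strictly positive as soon as $\Phi(t)>0$, to obtain $\Phi'(t)/w(\Phi(t))\le f(t)$, and integrate over $[0,t]$. Using the change of variable $y=\Phi(s)$ together with the definition $G(x)=\int_{x_0}^{x}dy/w(y)$, the left-hand side becomes $G(\Phi(t))-G(\Phi(0))=G(\Phi(t))-G(\alpha)$, so that $G(\Phi(t))\le G(\alpha)+\int_0^t f(s)\,ds$. Since $1/w>0$ on $(0,+\infty)$, the function $G$ is strictly increasing, hence $G^{-1}$ is well-defined and also strictly increasing on the relevant range, and applying it yields $u(t)\le \Phi(t)\le G^{-1}\!\left(G(\alpha)+\int_0^t f(s)\,ds\right)$, which is the desired bound.

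The main obstacle I expect is the degenerate case $\alpha=0$: then $\Phi(0)=0$, and depending on the behaviour of $w$ near $0$ (which, under the paper's \textbf{main assumption} $\int_{0^+}du/\rho(u)=+\infty$, is precisely the situation of interest) the value $G(0^+)$ may be $-\infty$, and we also lose the strict positivity needed to divide by $w(\Phi(\cdot))$ on an initial interval. The standard remedy is to replace $\alpha$ by $\alpha+\varepsilon$ with $\varepsilon>0$ arbitrary, run the argument above on the majorized inequality $u(t)\le (\alpha+\varepsilon)+\int_0^t f(s)\,w(u(s))\,ds$ (for which the corresponding $\Phi_\varepsilon$ starts strictly positive, so division by $w(\Phi_\varepsilon)$ is legitimate throughout), and then pass to the limit $\varepsilon\downarrow 0$ using the continuity of $G^{-1}$. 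This also makes the final inequality well-posed in the degenerate case, consistently with the Osgood-type uniqueness philosophy underlying the rest of the paper.
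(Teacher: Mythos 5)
Your proof is correct: it is the standard argument for the Bihari--LaSalle inequality (majorize $u$ by $\Phi(t)=\alpha+\int_0^t f\,w(u)$, use monotonicity of $w$ to get $\Phi'\le f\,w(\Phi)$, divide, integrate, and invert $G$), and your $\varepsilon$-regularization correctly handles the degenerate case $\alpha=0$ where $G(0^+)$ may be $-\infty$. The paper itself gives no proof of this proposition --- it is quoted as a classical auxiliary tool --- so there is nothing to compare against; the only caveat worth recording is that, as stated, one must implicitly restrict $t$ so that $G(\alpha)+\int_0^t f(s)\,ds$ lies in the range of $G$ (automatic in the paper's applications, where $\int^{+\infty}dy/w(y)=+\infty$).
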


\subsection{Existence and uniqueness result}

The real difficulty to derive the existence and uniqueness result for VGBDSDEs \eqref{equation1} resides in the fact that when we use the Yosida approximations, we obtain this following doubly BSDEs \begin{eqnarray}
Y_t=\xi+\int_t^T f(s,Y_s,Z_s)ds +\int_t^T g(s,Y_s)dA_s+\int_t^T h(s,Y_s,Z_s)\overleftarrow{dB}_s-\int_t^T Z_sdW_s.\label{EqGBSDE}
\end{eqnarray}
introduced in \cite{Bal} by Boufoussi et al. Under at least Lipschitz assumption on the coefficients, they establish existence and uniqueness result. Unfortunately, this assumption doesn't work in our context. Therefore before starting our main existence and uniqueness result, we need to establish a general existence and uniqueness result for GBDSDEs \eqref{EqGBSDE} under "{\bf Main assumptions}". 
\begin{theorem}\label{Tfirst}
Assume assumptions $(\bf H1)$-$(\bf H2)$ hold. Then, GBDSDEs \eqref{EqGBSDE} has a unique solution.
\end{theorem}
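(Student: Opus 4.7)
My strategy mirrors the standard Picard scheme for BDSDEs, with every Gronwall step replaced by an appeal to the Bihari inequality stated above, exploiting the concavity of $\rho$ and the strict inequality $\alpha<1$ in (H3)(iii) that governs the backward It\^o integral. For \emph{uniqueness}, take two solutions $(Y^{i},Z^{i})$, $i=1,2$, set $\bar Y = Y^{1}-Y^{2}$ and $\bar Z = Z^{1}-Z^{2}$, and apply It\^o's formula to $e^{\mu s+\lambda A_{s}}|\bar Y_{s}|^{2}$ on $[t,T]$. Taking expectation, the BDSDE bookkeeping produces $\mathbb{E}\int_{t}^{T}e^{\mu s+\lambda A_{s}}\|\bar Z_{s}\|^{2}\,ds$ from the forward It\^o integral and $\mathbb{E}\int_{t}^{T}e^{\mu s+\lambda A_{s}}\|h(s,Y^{1}_{s},Z^{1}_{s})-h(s,Y^{2}_{s},Z^{2}_{s})\|^{2}\,ds$ from the backward one. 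Combining Young's inequality with (H3)(ii)--(iv), and applying Jensen's inequality (valid since $\rho$ is concave), I choose $\mu,\lambda$ large and a Young weight that uses $\alpha<1$ to absorb the $\|\bar Z\|^{2}$-contribution of $\|h^{1}-h^{2}\|^{2}$ into the $\|\bar Z\|^{2}$ term from $dW$; this yields
$$\mathbb{E}|\bar Y_{t}|^{2}\leq \tilde C \int_{t}^{T}\rho\!\bigl(\mathbb{E}|\bar Y_{s}|^{2}\bigr)\,ds.$$
A time-reversed Bihari argument, combined with $\int_{0^{+}}du/\rho(u)=+\infty$, forces $\mathbb{E}|\bar Y_{t}|^{2}\equiv 0$, whence $\bar Z\equiv 0$ from the same identity.

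For \emph{existence}, I construct Picard iterates. Starting from $(Y^{0},Z^{0})=(0,0)$, define $(Y^{n+1},Z^{n+1})$ inductively as the unique solution of
$$Y^{n+1}_{t}=\xi+\int_{t}^{T} f(s,Y^{n}_{s},Z^{n+1}_{s})\,ds+\int_{t}^{T} g(s,Y^{n}_{s})\,dA_{s}+\int_{t}^{T} h(s,Y^{n}_{s},Z^{n+1}_{s})\,\overleftarrow{dB}_{s}-\int_{t}^{T} Z^{n+1}_{s}\,dW_{s}.$$
With $Y^{n}$ frozen, the generators $(s,z)\mapsto f(s,Y^{n}_{s},z)$ and $(s,z)\mapsto h(s,Y^{n}_{s},z)$ are Lipschitz in $z$ (with $\alpha$-contraction for $h$) and are square-integrable at $z=0$ thanks to the linear growth of $\sqrt{\rho(\cdot)}$ at infinity guaranteed by concavity together with $Y^{n}\in\mathcal{S}^{2}$; hence the classical Lipschitz result of Boufoussi et al.~\cite{Bal} supplies $(Y^{n+1},Z^{n+1})\in\mathcal{S}^{2}(\mathbb{R}^{k})\times\mathcal{M}^{2}(\mathbb{R}^{d\times k})$. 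Applying the same It\^o computation first to $Y^{n}$ itself delivers a uniform bound $\sup_{n}\|(Y^{n},Z^{n})\|_{\mathcal{S}^{2}\times\mathcal{M}^{2}}<\infty$, and applied next to $e^{\mu s+\lambda A_{s}}|Y^{n+1}_{s}-Y^{n}_{s}|^{2}$ produces the fundamental recursion
$$\Phi_{n}(t):=\sup_{s\in[t,T]}\mathbb{E}|Y^{n+1}_{s}-Y^{n}_{s}|^{2}\leq C\int_{t}^{T}\rho\!\bigl(\Phi_{n-1}(s)\bigr)\,ds.$$

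The \emph{main obstacle} is not the a priori estimates but converting this recursion into a Cauchy property, since the map $u\mapsto C\int_{t}^{T}\rho(u(s))\,ds$ is not a contraction on any natural space. Following Mao's trick, set $\bar\Phi(t):=\limsup_{n\to\infty}\Phi_{n}(t)$; the uniform $n$-bound on $\Phi_{n}$ legitimizes Fatou, yielding $\bar\Phi(t)\leq C\int_{t}^{T}\rho(\bar\Phi(s))\,ds$. Bihari's inequality together with $\int_{0^{+}}du/\rho(u)=+\infty$ then forces $\bar\Phi\equiv 0$, so $\{Y^{n}\}$ is Cauchy in $L^{2}(\Omega\times[0,T])$. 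A further It\^o--Burkholder--Davis--Gundy pass on $\sup_{t}|Y^{n+1}_{t}-Y^{n}_{t}|^{2}$ promotes the convergence to $\mathcal{S}^{2}\times\mathcal{M}^{2}$, and $\{Z^{n}\}$ inherits the Cauchy property from the same estimate. Passing to the limit in the BDSDE (using continuity of $f,g,h$ in $y$, their Lipschitz continuity in $z$, and dominated convergence) produces the desired solution, and uniqueness has already been established.
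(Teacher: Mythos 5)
Your overall strategy (Picard iteration combined with Bihari's inequality in the spirit of Mao, with $\alpha<1$ used to absorb the $\|\bar Z\|^{2}$ contribution of the backward integral into the forward one) is the same as the paper's, and your uniqueness argument is essentially identical to theirs. The existence part, however, has a genuine gap at exactly the point you yourself flag as ``the main obstacle''. You derive the recursion $\Phi_{n}(t)\leq C\int_{t}^{T}\rho(\Phi_{n-1}(s))\,ds$ for the \emph{consecutive} differences $\Phi_{n}(t)=\sup_{s\in[t,T]}\mathbb{E}|Y^{n+1}_{s}-Y^{n}_{s}|^{2}$, show via Fatou and Bihari that $\limsup_{n}\Phi_{n}\equiv 0$, and conclude that $\{Y^{n}\}$ is Cauchy. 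That last implication is false: $\mathbb{E}|Y^{n+1}_{s}-Y^{n}_{s}|^{2}\to 0$ says nothing about $\mathbb{E}|Y^{n+m}_{s}-Y^{n}_{s}|^{2}$ for large $m$ (compare the partial sums of the harmonic series), and since $u(\cdot)\mapsto C\int_{t}^{T}\rho(u(s))\,ds$ is not a contraction you cannot upgrade decay of consecutive increments to summability. The paper circumvents this by estimating $\mathbb{E}|Y^{n+m}_{t}-Y^{n}_{t}|^{2}$ for \emph{arbitrary} $m$, dominating it uniformly in $m$ by an explicit decreasing majorant $\phi_{n}(t)$ (with $\phi_{1}(t)=M\rho(M_{1})(T-t)$ and $\phi_{n+1}(t)=M\int_{t}^{T}\rho(\phi_{n}(s))\,ds$) on a sufficiently small terminal interval $[T_{1},T]$, proving $\phi_{n}\downarrow 0$ via Ascoli--Arzel\`a plus Bihari, and then pasting solutions backwards over finitely many such subintervals. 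Your argument must be rerun with $\Phi_{n,m}(t)=\sup_{s\in[t,T]}\mathbb{E}|Y^{n+m}_{s}-Y^{n}_{s}|^{2}$ and a bound uniform in $m$ before the Cauchy property can be claimed.

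A secondary issue: you freeze $g$ at the previous iterate, writing $\int_{t}^{T}g(s,Y^{n}_{s})\,dA_{s}$ in the equation defining $Y^{n+1}$. Under $(\bf H1)$--$(\bf H3)$ alone, $g$ satisfies only the one-sided monotonicity bound $(\bf H3)$-$(iv)$ together with $g(\cdot,0)\in\mathcal{A}^{2}$; there is no growth control of $g$ in $y$ (that is $(\bf H4)$, which is not assumed in this theorem), so $g(\cdot,Y^{n})$ need not belong to $\mathcal{A}^{2}$ and your frozen equation may fail to be well posed. The paper instead keeps $g$ evaluated at the \emph{current} iterate $Y^{n}$, so that the monotonicity is used directly in the energy estimates rather than any growth bound; you should do the same.
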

\begin{remark}
We emphasize that when $g\equiv 0$, equation \eqref{EqGBSDE} becomes the classical backward doubly stochastic equations (BDSDEs, in short) introduce by Pardoux and Peng \cite{PP} under Lipschitz generator and recently relax by many authors. We can see in  \cite{ON}, Owo and N'zi's work and references therein.   
\end{remark}
\begin{proof}
{\bf $(i)$ Existence}\\
To prove existence result,  we follow the well-know idea by
constructing a Picard scheme and show its convergence.
In this fact, let set $Y^0_s =0 $ and define for $n\in \mathbb{N}$ recursively
\begin{eqnarray}
Y^{n}_t &=& \xi + \int_t^T f(s,Y_s^{n-1},Z_s^n)ds+\int_t^Tg(s,Y_s^{n})dA_s+\int_t^T h(s,Y_s^{n-1},Z_s^n)\overleftarrow{dB}_s\nonumber\\
&&- \int_t^T Z^{n}_sdW_s,\; 0 \leq t \leq T.\label{eq5}
\end{eqnarray}
Let us remark that for $Y^{n-1}\in\mathcal{S}^{2}(\R^k)$, generators $f$ and $h$ depend only of $z$ and are Lipschitz. Therefore thanks to Theorem 2.1 in \cite{Bal}, GBDSDEs  \eqref{eq5} admits a unique solution $(Y^{n},Z^{n})\in\mathcal{S}^{2}(\R^k)\times \mathcal{M}^{2}(\R^{d\times k})$. Our purpose is to prove that the sequence of processes $(Y^{n},Z^{n})$ converges in $\mathcal{S}^{2}(\R^k)\times \mathcal{M}^{2}(\R^{d\times k})$ and its limit solves GBDSDEs \eqref{EqGBSDE}. The proof will subdivided into four steps\\
{\it Step 1:} There exist $T_1\in[0,T)$ and $M_1\geq 0$ such that for each $n\geq 1$, 
\begin{eqnarray}\label{S2}
\E\left(|Y^n_t|^2+\int^T_t|Y^n_s|^2dA_s+\int^T_t\|Z^n_s\|^2ds\right)\leq M_1, \;\; \forall\, t\in[T_1,T].
\end{eqnarray}
Applying again the generalized Itô's formula, we obtain
\begin{eqnarray*}
\E(|Y^{n}_t|^2)+\int_t^T\|Z^{n}_s\|^2ds&=&\E(|\xi|^2)+2\E\left[\int_t^T\langle Y^{n}_s,\,f(s,Y^{n-1}_s,Z^{n}_s)\rangle ds\right]\\
&&+2\E\left[\int_t^T\langle Y^n_s,\,g(s,Y^{n}_s)\rangle dA_s\right]\\
&&+2\E\left[\int_t^T\|h(s,Y^{n-1}_s,Z^{n}_s)\|^2ds\right]
\end{eqnarray*}
There exist by Young inequality a constants $\lambda> 0$ such that 
\begin{eqnarray*}
2\langle Y^n_s,\,f(s,Y^{n-1}_s,Z^n_s)\rangle & =& 2\langle Y^n_s,\,f(s,Y^{n-1}_s,Z^n_s)-f(s,0,0)\rangle +2\langle Y^n_s,\,f(s,0,0)\rangle\\
&\leq& \left(\frac{1}{\lambda}+1\right)|Y^n_s|^2+\lambda\rho(|Y^{n-1}_s|^2)+\lambda K\|Z^n_s\|^2+|f(s,0,0)|^2,
\end{eqnarray*}
and 
\begin{eqnarray*}
\|h(s,Y^{n-1}_s,Z^n_s)\|^2& \leq& (1+\lambda)\|h(s,Y^{n-1}_s,Z^n_s)-h(s,0,0)\|^2+(1+\frac{1}{\lambda})\|hs,0,0)|^2\\
&\leq& (1+\lambda)\rho(|Y^{n-1}_s|^2)+\alpha(1+\lambda)\|Z^n_s\|^2+(1+\frac{1}{\lambda})\|h(s,0,0)\|^2.
\end{eqnarray*}
Next, since $\beta$ appear in $(\bf H2)$ can be suppose negative, we have
\begin{eqnarray*}
2\langle Y^n_s,\,g(s,Y^{n}_s) \rangle & =& 2\langle Y^n_s,\,g(s,Y^{n}_s)-g(s,0)\rangle +2\langle Y^n_s,\,g(s,0)\rangle\\
&\leq& 2\beta|Y^n_s|^2+|\beta||Y^n_s|^2+\frac{1}{|\beta|}|g(s,0)|^2\\
&=& -|\beta||Y^n_s|^2+\frac{1}{|\beta|}|g(s,0)|^2.
\end{eqnarray*}
Then, it follows by choosing $0<\lambda<\frac{1-\alpha}{\alpha+K}$ there exist $0<\bar{\alpha}_1=1-(\alpha(1+\lambda)+\lambda K)$ and $\mu=1+\frac{1}{\lambda}$ such that  
\begin{eqnarray}\label{R1}
&&\E\left[|Y^{n}_t|^2+|\beta|\int^T_t|Y^n_s|^2dA_s+\bar{\alpha}_1\int_t^T\|Z^{n}_s\|^2ds\right]\nonumber\\
&\leq & \E\left[\mu \int_{t}^{T}|Y^{n}_s|^2ds+(1+2\lambda)\int_t^T\rho(|Y^{n-1}|^2)ds\right]\nonumber\\
&&+C\E\left[|\xi|^2+\int_t^T(|f(s,0,0)|^2+\|h(s,0,0\|^2)ds+\int_t^T|g(s,0)|^2dA_s\right]. 
\end{eqnarray}
Therefore Gronwall's lemma yields 
\begin{eqnarray}\label{S3}
\E(|Y^{n}_t|^2)\leq \Gamma_t+ M\int_t^T\rho(\E(|Y^{n-1}_s|^2))ds,
\end{eqnarray}
where 
\begin{eqnarray*}
\Gamma_t=e^{\mu T}C\E\left[|\xi|^2+\int_t^T(|f(s,0,0)|^2+\|h(s,0,0\|^2)ds+\int_t^T|g(s,0)|^2dA_s\right]<+\infty
\end{eqnarray*}
and $M=(1+2\lambda)e^{\mu\,T}$. In view of \eqref{S3}, 
the proof of \eqref{S2} require an induction method.

For this, recalling again \eqref{S3}, since $Y^0 \equiv 0, \rho(0)=0$ and $({\bf H2}$-$(i)$, we have
\begin{eqnarray*}
\E(|Y^{1}_t|^2)&\leq &\Gamma_t+ M\int_t^T\rho(\E(|Y^{0}_s|^2)ds\\
&\leq & \Gamma_0\\
&<&+\infty.
\end{eqnarray*}
Since $\rho$ is non-decreasing, we have 
\begin{eqnarray*}
\E(|Y^{2}_t|^2)&\leq &\Gamma_t+ M\int_t^T\rho(\E(|Y^{1}_s|^2)ds\\
&\leq& \Gamma_0+ M(T-t)\rho(2\Gamma_0).
\end{eqnarray*}
Since, thanks to  $(\bf H2)$, $T\rho(2\Gamma_0)$ is finite, we can find $T_1\in[0,T)$ depends only on $\rho$ such that  $M(T-T_1)\rho(2\Gamma_0)\leq \Gamma_0$. Then for $t\in[T_1,T]$, we have
\begin{eqnarray*}
\E(|Y^{2}_t|^2)&\leq &\Gamma_0+ M(T-T_1)\rho(\Gamma_0)\\
&\leq & M_1,
\end{eqnarray*}
with $M_1=2\Gamma_0$.
We suppose that for some $n\in\N^*$, we have 
\begin{eqnarray}\label{Hr}
\E(|Y^{n}_t|^2)\leq M_1,\;\; \mbox{for all}\;\; t\in[T_1,T].
\end{eqnarray}
Next, it follows from inequalities \eqref{S3} and \eqref{Hr}, and the non decrease of $\rho$ that for $t\in[T_1,T]$
\begin{eqnarray*}
\E(|Y^{n+1}_t|^2)&\leq& \Gamma_0+ M\int_t^T\rho(\E(|Y^{n}_s|^2)ds\\
&\leq & \Gamma_0+ M(T-T_1)\rho(M_1)\\
&\leq& M_1,
\end{eqnarray*}
which according to \eqref{R1} prove Step 1.

{\it Step 2:} For $n,\,m\in\N$, we claim that there exist $\bar{M}>0$ such that
\begin{eqnarray}\label{S1}
\E\left[|Y^{n+m}_t-Y^n_t|^2+\int_t^T|Z^{n+m}_s-Z^n_s|^2ds\right]\leq \bar{M}\int_t^T\rho(\E(|Y^{n+m-1}_s-Y^{n-1}_s|^2)ds.
\end{eqnarray}
Indeed, for any $\bar{\mu}$, and in view of Itô's formula, we get
\begin{eqnarray*}
&&\E(e^{\bar{\mu} A_t}|Y^{n+m}_t-Y^n_t|^2)+\mu \int_t^Te^{\bar{\mu} A_s}|Y^{n+m}_s-Y^n_s|^2ds+\int_t^Te^{\bar{\mu} A_s}\|Z^{n+m}_s-Z^n_s\|^2ds\\
&=&2\E\left[\int_t^Te^{\bar{\mu} A_s}\langle Y^{n+m}_s-Y^n_s,\,f(s,Y^{n+m-1}_s,Z^{n+m}_s)-f(s,Y^{n-1}_s,Z^{n}_s)\rangle ds\right]\\
&&+2\E\left[\int_t^Te^{\bar{\mu} A_s}\langle Y^{n+m}_s-Y^n_s,\,g(s,Y^{n+m}_s)-g(s,Y^{n}_s)\rangle dA_s\right]\\
&&+2\E\left[\int_t^Te^{\bar{\mu} A_s}\|h(s,Y^{n+m-1}_s,Z^{n+m}_s)-h(s,Y^{n-1}_s,Z^{n}_s)\|^2ds\right]
\end{eqnarray*}
It follows from $(\bf H3)$ and use the same estimates as in step 1, that for all $\bar{\mu}>0$, 
\begin{eqnarray}\label{Z3}
&&\E(e^{\bar{\mu} A_t}|Y^{n+m}_t-Y^n_t|^2)+\bar{\alpha}\int_t^Te^{\bar{\mu} A_s}\|Z^{n+m}_s-Z^n_s\|^2ds\nonumber\\
&\leq &\E\left[\frac{1}{\lambda}\int_t^Te^{\mu A_s}|Y^{n+m}_s-Y^n_s|^2ds\right.\nonumber\\
&&+\left.(1+\lambda)\int_t^T\rho(e^{\mu A_s}|Y^{n+m-1}_s-Y^{n-1}_s|^2)ds\right],
\end{eqnarray}
where $\bar{\alpha}=1-\lambda K-\alpha$. Choosing $\lambda$ such that $\bar{\alpha}>0$, it follows from Gronwall's and Jensen's inequalities that
\begin{eqnarray*}
\E(e^{\bar{\mu} A_t}|Y^{n+m}_t-Y^n_t|^2)&\leq &\bar{M}\int_t^T\rho(\E(e^{\bar{\mu}A_s}|Y^{n+m-1}_s-Y^{n-1}_s|^2))ds,
\end{eqnarray*}
where $\bar{M}=e^{\frac{T}{\lambda}}(1+\lambda)$. Finally, passing to the limit when $\bar{\mu}$ tends to $0$ we obtain
\begin{eqnarray}\label{Step2}
\E(|Y^{n+m}_t-Y^n_t|^2)&\leq &\bar{M}\int_t^T\rho(\E(|Y^{n+m-1}_s-Y^{n-1}_s|^2))ds,
\end{eqnarray}
 \\
{\it Step 3:} The sequence of process $(Y^{n},Z^n)$ converge in $\mathcal{S}^{2}(\R^k)\times \mathcal{M}^2(\R^{d\times k})$\\ 
For $M$ and $M_1$ obtained in Step 1, let consider $(\phi_n)_{n\geq 1}$ the sequence of processes defined recursively by
\begin{eqnarray*}
\phi_1(t)=M\rho(M_1)(T-t),\;\;\; \;\;\;\phi_{n+1}(t)=M\int_{t}^{T}\rho(\phi_n(s))ds, \;\;\; t\in[0,T]
\end{eqnarray*}
and $(\phi_{n,m})_{n,m\geq 1}$ defined by
\begin{eqnarray*}
\phi_{n,m}(t)=\E(|Y^{n+m}_t-Y^n_t|^2),\;\;\; t\in[T_1,T].
\end{eqnarray*}
For any $m\geq 1$ and all $n\geq 1$, let us prove by induction method on $n$ that
\begin{eqnarray}\label{Z12}
\phi_{n,m}(t)\leq \phi_n(t)\leq \phi_{n-1}(t)\leq\cdot\cdot\cdot\leq \phi_1(t), \;\;\; t\in [T_1,T].
\end{eqnarray}
First, in view of Step 2 and Step 1 
\begin{eqnarray*}
\phi_{1,m}(t)=\E(|Y^{1+m}_t-Y^1_t|^2)&\leq &\bar{M}\int_t^T\rho(\E(|Y^m(s)|^2))ds\nonumber\\
&\leq & \bar{M} (T-t)\rho(M_1)\nonumber\\
&\leq & \bar{M}(T-t)\rho(M_1) 
\end{eqnarray*}
Assume that $\mu$ and $\lambda$  appear respectively  in Step 1 and 2 is defined such that $u=\frac{1}{\lambda}$, we have $\bar{M}\leq M$. Therefore it follows from the previous inequality that
\begin{eqnarray}\label{Z1}
\phi_{1,m}(t)&\leq & M(T-t)\rho(M_1)=\phi_1(t).
\end{eqnarray}
Now recall Step 2, it follows from \eqref{Step2} that
\begin{eqnarray*}
\phi_{2,m}(t)=\E(|Y^{2+m}_t-Y^2_t|^2)&\leq &\bar{M}\int_t^T\rho(\E(|Y^{1+m}(s)-Y^1(t)|^2))ds\\
&=&\bar{M}\int_t^T\rho(\phi_{1,m})ds\\
&\leq& M \int_t^T\rho(\phi_1(s))ds=\phi_2(t).
\end{eqnarray*}
On other hand, since $\phi_1(t)=M(T-t)\rho(M_1)\leq M_1$,
\begin{eqnarray*}
\phi_2(t)=M\int_t^T\rho(\phi_1(s))ds\leq M(T-t)\rho(M_1)=\phi_1(t).
\end{eqnarray*}
In the other word, we get
\begin{eqnarray*}
\phi_{2,m}(t)\leq \phi_{2}(t)\leq \phi_{1}(t), \;\; t\in [T_1,T].
\end{eqnarray*}
Next, let us assume that \eqref{Z12} holds for some $n\geq 2$. Using again Step 2, we get for all $t\in [T_1,T]$,

\begin{eqnarray*}
\phi_{n+1,m}(t)&\leq & M\int_t^T\rho(\phi_{n,m}(s))ds\\ &\leq & M\int_t^T\rho(\phi_{n}(s))ds=\phi_{n+1}(t)\\
&\leq & \int_t^T\rho(\phi_{n-1}(s))ds=\phi_n(t),
\end{eqnarray*}
which proved \eqref{Z12} for $n+1$. Therefore using induction principle, \eqref{Z12} holds. Moreover, for $t,t'\in [T_1,T]$, we have 
\begin{eqnarray}
\sup_{n\geq 0}|\phi_n(t)-\phi_n(t')|\leq M\rho(M_1)|t-t'|,
\end{eqnarray}
that proves that the decreasing sequence $(\phi_n)_{n\geq 0}$ is uniformly equicontinuous. Therefore, in view of Ascoli-Arzela theorem, it converge to a a continuous function $\phi$ as $n$ goes to $+\infty$ such that
\begin{eqnarray*}
\phi(t)=M\int_t^T\rho(\phi(s)ds.
\end{eqnarray*} 
Hence with the help of  Bihari's inequality we have $\phi\equiv 0$ on $[T_1,T]$. Finally according to \eqref{Z2} and \eqref{Z3} $(Y^n, Z^n)$ is a Cauchy sequence in $\mathcal{M}^2([T_1,T],\R^k)\times\mathcal{M}^2([T_1,T],\R^{d\times k})$. Furthermore by adapted calculus, 
\begin{eqnarray}
\E\left(\sup_{T_1\leq t\leq T}|Y^{n+m}(t)-Y^{n}(t)|^2\right)\leq C\E\left[\int_t^T|Y^{n+m}_s-Y^n_s|^2ds+\phi_n(T_1)\right.\nonumber\\
\left.+\int_t^T\|Z^{n+m}_s-Z^n_s\|^2ds\right],
\end{eqnarray} 
which implies that $(Y^n)$ is also a Cauchy sequence in a classical Banach space $\mathcal{S}^2([T_1,T],\R^k)$. Therefore there exist a process $(Y,Z)\in \mathcal{S}^2([T_1,T],\R^k)\times\mathcal{M}^2([T_1,T],\R^{d\times k})$ such that for $t\in[T_1,T]$, 
\begin{eqnarray}\label{S4}
\E\left[\sup_{0\leq t\leq T}|Y^{n}_t-Y_t|^{2}+\int_{t}^{T}\|Z^{n}_s-Z_s\|^2ds\right]\rightarrow 0\;\; \mbox{as}\;\; n\rightarrow +\infty.
\end{eqnarray}
{\it Step 4:} $(Y,Z)$ solves GBDSDE \eqref{EqGBSDE}\\
Letting $n\rightarrow +\infty$ in \eqref{eq5} provides that $(Y,Z)$ solves GBDSDE \eqref{EqGBSDE} on $[T_1,T]$. If $T_1=0$, then we have proved existence result. But if $T_1\neq 0$, we must continuous by study now the existence result for this equation
\begin{eqnarray}\label{S5}
Y_t=Y_{T_1}+\int_t^{T_1}f(s,Y_s,Z_s)ds+\int_t^{T_1}g(s,Y_s)dA_s+\int_t^{T_1}h(s,Y_s,Z_s)\overleftarrow{dB}_s-\int_t^{T_1}Z_sdW_s, \, 0\leq t\leq T_1.
\end{eqnarray} 
With the same analysis, there exist $T_2\in[0,T_1]$ and a process that also denoted $(Y,Z)$ defined on $[T_2,T_1]$ solution of \eqref{S5}. If $T_2=0$ the proof of existence is complete. Otherwise, we repeat the previous analysis. Therefore we built a decreasing sequence $(T_p)_{p\geq 1}$ such that on each $[T_{p},T_{p+1}]$, GBDSDE associated to the data $(Y_{p+1}, f,g,h)$ admit a solution. We also prove as like in \cite{ON} that there exists a finite $p$ such that $T_p=0$. Finally by the path continuity of  the solution on each interval, we obtain the solution on $[0,T]$ and end the proof of existence.\\
{\bf (ii) Uniqueness}\\
Let $(Y,Z)$ and $(\bar{Y},\bar{Z})$ be two solutions of GBDSDEs \eqref{EqGBSDE}. By use the now classical computation as above (Step 1 and Step 3) we have
\begin{eqnarray*}
\E\left[e^{\mu A_t}|Y_t-\bar{Y}_t|^2  +\int_{t}^Te^{\mu A_s}\|Z_s-\bar{Z}_s\|^{2}ds\right]\leq C\int_t^T\rho(\E(e^{\mu A_s}|Y_s-\bar{Y}_s|^2))ds,
\end{eqnarray*}
which yields by Bihari's inequality $Y=\bar{Y}$ and also $Z=\bar{Z}$. 
\end{proof}

\begin{remark}
Let consider the following assumption
\begin{description}
\item [(H3')]
\begin{equation*}
\left\{
\begin{array}{lll}
(i)&  f(.,0,0) \in\mathcal{M}^{2}(\R), \ \ h(.,0,0) \in
\mathcal{M}^{2}(\R^{l}),\; g(s,0)\in \mathcal{A}^{2}(\R),\\\\ 
(ii)&\mid f(t,y_{1},z_{1})-f(t,y_{2},z_{2}) \mid^{2} \leq \rho(t,\mid
y_{1}-y_{2}\mid^{2})+C\mid \mid z_{1}-z_{2}\mid \mid^{2} \vspace{0.2cm} &
\\\\
(iii)&\mid \mid h(t,y_{1},z_{1})-h(t,y_{2},z_{2}) \mid \mid^{2} \leq \rho(t,\mid
y_{1}-y_{2}\mid^{2})+\alpha \mid \mid z_{1}-z_{2}\mid \mid^{2} &\\\\
(iv)&\langle y_1-y_2,(g(t,y_1)-g(t,y_2)\rangle \leq \beta |y_1-y_2|^{2}&
\end{array}
\right.,
\end{equation*}
for all $(t, y_{i},z_{i})\in [0,T]\times\mathbb{R}%
\times \mathbb{R}^{d}$ $i=1,2$, where $C > 0,\; 0 < \alpha < 1$ and $\beta\in\R$ are three constants and $\rho:[0,T]\times \R^{+}\rightarrow\R^{+}$ satisfies:
\begin{itemize} 
\item[(a)] for fixed $t\in[0,T],\, \rho (t,.)$ is continuous, concave and non-decreasing such that $\rho(0)=0$
\item[(b)] for all $\displaystyle u\geq 0,\; \int_0^T\rho(t,u)dt<+\infty$,
\item[(c)] For any $M>0$, the following ODE
 \begin{eqnarray*}
 \left\{
 \begin{array}{ll}
 u'(t)=-M\rho(t,u)&\\\\
 u(T)=0&
 \end{array}
 \right.
\end{eqnarray*}
\end{itemize}
has a unique solution $u(t)= 0,\, t\in[0,T]$.
\end{description} 
Theorem \ref{Tfirst} remain valid replacing $(\bf H3)$ by $(\bf H3')$. Obviously, this change requires adaptation in proof. Indeed, since the function $\rho$ depends also to $t$, we cannot apply which need to be  replaced by $(\bf H3')$-$(c)$. We refer the reader to the works of Owo and N'zi \cite{ON} for more details.
\end{remark}

We give now the main result of this section.

\begin{theorem}\label{thm3.1} 
Assume the assumptions $(\bf H1)$-$(\bf H4)$ hold. Then, VGBDSDEs
\eqref{equation1} has a unique solution.
\end{theorem}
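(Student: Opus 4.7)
The plan is to build solutions via the Yosida penalization of both subdifferentials and then pass to the limit, leveraging Theorem \ref{Tfirst} to solve each penalized GBDSDE. For $\varepsilon>0$, I would consider the approximating generalized backward doubly SDE
\begin{eqnarray*}
Y^\varepsilon_t &=& \xi+\int_t^T\!\bigl[f(s,Y^\varepsilon_s,Z^\varepsilon_s)-\nabla\varphi_\varepsilon(Y^\varepsilon_s)\bigr]ds
+\int_t^T\!\bigl[g(s,Y^\varepsilon_s)-\nabla\psi_\varepsilon(Y^\varepsilon_s)\bigr]dA_s\\
&&+\int_t^T h(s,Y^\varepsilon_s,Z^\varepsilon_s)\overleftarrow{dB}_s-\int_t^T Z^\varepsilon_s dW_s.
\end{eqnarray*}
Because $\nabla\varphi_\varepsilon$ and $\nabla\psi_\varepsilon$ are globally Lipschitz (Proposition \ref{pro2.1}(iii)) and monotone, the modified coefficients still satisfy $(\mathbf{H1})$--$(\mathbf{H3})$ (absorbing the Lipschitz pieces into the $K,\beta$ constants and leaving the $\rho$ term untouched). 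Hence Theorem \ref{Tfirst} gives a unique $(Y^\varepsilon,Z^\varepsilon)\in\mathcal{S}^2(\R^k)\times\mathcal{M}^2(\R^{d\times k})$.

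Next I would derive three families of \emph{uniform-in-$\varepsilon$} a priori bounds. Applying It\^o's formula to $|Y^\varepsilon_t|^2$, using $\langle y,\nabla\varphi_\varepsilon(y)\rangle\geq\varphi_\varepsilon(y)\geq 0$ and $\langle y,\nabla\psi_\varepsilon(y)\rangle\geq 0$, together with the same Young/$\rho$ splitting that appears in Step~1 of the proof of Theorem \ref{Tfirst} and Bihari's inequality, I would first control $\E\sup_t|Y^\varepsilon_t|^2+\E\int_0^T\|Z^\varepsilon_s\|^2ds$ by a constant independent of $\varepsilon$. Applying It\^o to $\varphi_\varepsilon(Y^\varepsilon_t)$ and $\psi_\varepsilon(Y^\varepsilon_t)$ (interpreted in the generalized BDSDE sense, as in \cite{Bal}) and using the convex inequality $\varphi_\varepsilon(\xi)\leq\varphi(\xi)$ together with $(\mathbf{H2})$, I would then bound $\E\int_0^T|\nabla\varphi_\varepsilon(Y^\varepsilon_s)|^2 ds$ and $\E\int_0^T|\nabla\psi_\varepsilon(Y^\varepsilon_s)|^2dA_s$ uniformly in $\varepsilon$, together with $\E\sup_t\bigl[\varphi_\varepsilon(Y^\varepsilon_t)+\psi_\varepsilon(Y^\varepsilon_t)\bigr]$.

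To obtain strong convergence I would apply It\^o to $|Y^\varepsilon_t-Y^\delta_t|^2 e^{\mu A_t}$ and exploit the key estimate Proposition \ref{pro2.1}(v):
\begin{eqnarray*}
\langle \nabla\varphi_\varepsilon(Y^\varepsilon_s)-\nabla\varphi_\delta(Y^\delta_s),\,Y^\varepsilon_s-Y^\delta_s\rangle
\geq -(\varepsilon+\delta)\langle\nabla\varphi_\varepsilon(Y^\varepsilon_s),\nabla\varphi_\delta(Y^\delta_s)\rangle,
\end{eqnarray*}
and the analogous inequality for $\nabla\psi$. Combined with $(\mathbf{H3})$ and the uniform $L^2$ bounds on the penalized gradients, this produces an inequality of the form
\begin{eqnarray*}
\E e^{\mu A_t}|Y^\varepsilon_t-Y^\delta_t|^2+\bar\alpha\,\E\int_t^T\!\|Z^\varepsilon_s-Z^\delta_s\|^2ds
\leq C(\varepsilon+\delta)+\bar M\int_t^T\!\rho\bigl(\E e^{\mu A_s}|Y^\varepsilon_s-Y^\delta_s|^2\bigr)ds,
\end{eqnarray*}
and Bihari's lemma forces the left-hand side to $0$ as $\varepsilon,\delta\to 0$; combined with BDG, this yields Cauchyness in $\mathcal{S}^2\times\mathcal{M}^2$. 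Denote the limits by $(Y,Z)$. By the uniform $L^2$ bounds, $\nabla\varphi_\varepsilon(Y^\varepsilon)$ converges weakly in $\mathcal{M}^2(\R^k)$ to some $U$ and $\nabla\psi_\varepsilon(Y^\varepsilon)$ weakly in $\mathcal{A}^2(\R^k)$ to some $V$. Passing to the limit in the penalized equation, $(Y,U,V,Z)$ satisfies \eqref{Eqdef}.

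The final and most delicate step is the inclusion $(Y_t,U_t)\in\partial\varphi$ and $(Y_t,V_t)\in\partial\psi$. I would argue this via maximal monotonicity: for any $(u,u^{\ast})\in\partial\varphi$ and any non-negative test process $\zeta$, use the monotonicity inequality
\begin{eqnarray*}
\E\int_0^T\zeta_s\langle \nabla\varphi_\varepsilon(Y^\varepsilon_s)-u^\ast,\,J_\varepsilon(Y^\varepsilon_s)-u\rangle ds\geq 0,
\end{eqnarray*}
together with $\|Y^\varepsilon-J_\varepsilon(Y^\varepsilon)\|=\varepsilon\|\nabla\varphi_\varepsilon(Y^\varepsilon)\|\to 0$ in $\mathcal{M}^2$ (from the uniform bounds), to pass to the limit and conclude $\langle U_s-u^\ast, Y_s-u\rangle\geq 0$ a.e., hence $U_s\in\partial\varphi(Y_s)$ by the characterization of maximal monotone operators; the same argument (with the measure $dA_s$) handles $V$. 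Uniqueness follows the template of Step~3/uniqueness in Theorem \ref{Tfirst}: subtracting two solutions, the differences of the subdifferential terms have non-negative inner product with $Y-\bar Y$, so they drop out, and Bihari's inequality closes the argument. The main obstacle is the simultaneous presence of the non-Lipschitz modulus $\rho$ and the two penalizations with different time-scales $ds$ and $dA_s$: the penalization bound for $\nabla\psi_\varepsilon$ must be taken in $\mathcal{A}^2$ rather than $\mathcal{M}^2$, and the use of Bihari requires carefully absorbing the $dA$ terms into an $e^{\mu A_t}$ weight via the dissipativity constant $\beta$ in $(\mathbf{H3})(iv)$.
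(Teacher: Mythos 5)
Your proposal follows essentially the same route as the paper: Yosida penalization of both subdifferentials solved via Theorem \ref{Tfirst}, uniform a priori bounds on $(Y^\varepsilon,Z^\varepsilon)$ and on the penalized gradients, a Cauchy estimate built on Proposition \ref{pro2.1}(v) closed by Bihari's inequality, identification of the limits $U,V$ by weak convergence, and the inclusion in the subdifferential obtained from monotonicity together with lower semicontinuity. The only cosmetic differences are that the paper derives the second a priori estimate from the discrete subdifferential inequality summed over a partition (rather than It\^o applied to $\varphi_\varepsilon(Y^\varepsilon)$) and verifies the inclusion via the subdifferential inequality and lower semicontinuity of $\varphi,\psi$ rather than by invoking maximal monotonicity, but these are equivalent in substance.
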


\begin{proof}
{(\bf i)} {\bf Existence}\\
Let $\nabla \varphi_\varepsilon$ (resp. $\nabla \psi_\varepsilon$) be Yosida approximation of the sub-differential operator $\partial \varphi$ (resp. $\partial \varphi$ and following GBDSDE 
\begin{eqnarray}\label{eq1}
Y_t^\varepsilon
+\int_t^T\nabla \varphi_\varepsilon(Y_s^\varepsilon)ds
+\int_t^T\nabla \psi_\varepsilon(Y_s^\varepsilon)dA_s
&=&\xi+\int_t^Tf(s,Y_s^\varepsilon,Z_s^\varepsilon)ds
+\int_t^Tg(s,Y_s^\varepsilon)\,{\rm d}A_s\nonumber\\
&&+\int_t^Th(s,Y_s^\varepsilon,Z_s^\varepsilon)\overleftarrow{dB}_s-\int_t^TZ_s^\varepsilon dW_s,
\end{eqnarray}
where $\xi,\, f,\, h ,\, g,\, \varphi$ and $\psi$ satisfy $(\bf H1)$-$(\bf H4)$. 
Therefore in virtue of Theorem \ref{Tfirst}, GBDSDE \eqref{eq1} admits a unique solution $(Y^{\varepsilon},Z^{\varepsilon})$. Moreover, since $\nabla\psi_{\varepsilon}$ and $\nabla\varphi_{\varepsilon}$ are monotone, it follows from comparison that $(Y^{\varepsilon},Z^{\varepsilon})_{\varepsilon}$is monotone family processes 

Our goal is to provide that the family of processes $(Y^{\varepsilon},Z^{\varepsilon})_{\varepsilon>0}$ converges and its limit is   solution of  BDSGVI \eqref{equation1}. In the sequel, $C>0$ is a constant which can change its value from
line to line.\\
{\it Step 1: A first priori estimate} 
\begin{eqnarray}\label{AStep1}
\E\left[\sup_{0\leq t\leq T}|Y_t^\varepsilon|^2+\int_0^T(|Y^{\varepsilon}_t|^2dA_t+\|Z_t^\varepsilon\|^2dt)\right]\leq C.
\end{eqnarray}
By using a generalized Itô's formula (cf. \cite{PP} Lemma 1.3) and taking expectation, we get,
\begin{eqnarray*}
&&\E\left[|Y_t^\varepsilon|^2 +\int_t^T\|Z_s^\varepsilon\|^2ds +\int_t^T\langle Y_s^\varepsilon,\nabla
\varphi_\varepsilon(Y_s^\varepsilon)\rangle ds+\int_t^T\langle Y_s^\varepsilon,\nabla
\psi_\varepsilon(Y_s^\varepsilon) \rangle dA_s\right]\\
&=&\E\left[|\xi|^2+2\int_t^T\langle Y_s^\varepsilon,\,f(s,Y_s^\varepsilon,Z_s^\varepsilon)\rangle ds+2\int_t^T\langle Y_s^\varepsilon,\, g(s,Y_s^\varepsilon)\rangle dA_s +2\int_t^T\|h(s,Y_s^\varepsilon,Z_s^\varepsilon)\|^2ds\right].
\end{eqnarray*}
From $(iv)$ of Proposition \ref{pro2.1}, we obtain
\begin{eqnarray*}
\E\left[|Y_t^\varepsilon|^2 +\int_t^T\|Z_s^\varepsilon\|^2ds\right]
&\leq&\E\left[|\xi|^2+2\int_t^T\langle Y_s^\varepsilon,\, f(s,Y_s^\varepsilon,Z_s^\varepsilon) \rangle ds+2\int_t^T\langle Y_s^\varepsilon,\, g(s,Y_s^\varepsilon) \rangle dA_s \right.\\
&&\left.+2\int_t^T\|h(s,Y_s^\varepsilon,Z_s^\varepsilon)\|^2ds\right]
\end{eqnarray*}
On the other hand, Schwartz's inequality and assumptions $(\bf H3)$ imply that for all $r, r'>0$
\begin{eqnarray*}
2\langle y,\, f(s,y,z)\rangle &=&\langle y,\, f(s,y,z)-f(s,0,0)\rangle+\langle y,\, f(s,0,0)\rangle\\
&\leq &r\rho(|y|^2)+(\frac{K}{r}+1)|y|^2+rK|z|^2+|f(s,0,0)|^{2},
\end{eqnarray*}
\begin{eqnarray*}
2\langle y,g(s,y)\rangle &=&2\langle y, \, g(s,y)-g(s,0)\rangle+2\langle y,\,g(s,0)\rangle\\
&\leq & (2\beta+|\beta|)|y|^{2}+\frac{1}{|\beta|}|g(s,0)|^{2}
\end{eqnarray*}
and 
\begin{eqnarray*}
\|h(s,y,z)\|^{2}&\leq & (1+\frac{1}{r'})\|h(s,y,z)-h(s,0,0)\|^{2}+(1+r')\|h(s,0,0)\|^{2}\\
&\leq&(1+\frac{1}{r'})\rho(|y|^{2})+\alpha(1+\frac{1}{r'})\|z\|^{2}+(1+r')\|h(s,0,0)\|^{2}.
\end{eqnarray*}
Chosing $r=\frac{1-\alpha}{2K}$ and $r'=\frac{3\alpha}{1-\alpha}$ and since $\beta<0$, we get
\begin{eqnarray*}
&&\E\left[|Y_t^\varepsilon|^2+|\beta|\int_{t}^{T}|Y^{\varepsilon}_s|^2 dA_s+\frac{1-\alpha}{6}\int_t^T\|Z_s^\varepsilon\|^2 ds\right]\nonumber\\
&\leq&
C\E\left[|\xi|^{2}+\int_t^T(|Y_s^\varepsilon|^2+\rho(Y_s^\varepsilon|^2))ds\right.\\
&&+\left.\int_t^T(|f(s,0,0)|^{2}+\|h(s,0,0)\|^{2})ds+\int_t^T|g(s,0)|^{2}dA_s\right]. 
\end{eqnarray*}
Finally, Bihari-LaSalle inequality yields
\begin{eqnarray*}
&&\sup_{0\leq t\leq T}\E(|Y_t^\varepsilon|^2)+C\E\left(\int_t^T|Y_t^\varepsilon|^2 dA_t+\int_t^T\|Z_t^\varepsilon\|^2 dt\right)\\
&\leq& C\E\left[|\xi|^{2}+\int_t^T(|f(s,0,0)|^{2}+\|h(s,0,0)\|^{2})ds+\int_t^T|g(s,0)|^{2}dA_s\right]. 
\end{eqnarray*}
Furthermore, applying again generalized Itô's formula together with Burkholder-Davis-
Gundy's inequality, we obtain
\begin{eqnarray}\label{equation3}
\E\left(\sup_{0\leq t\leq T}|Y_t^\varepsilon|^2\right)&\leq &C\E\left[|\xi|^{2}+\int_0^T(|f(s,0,0)|^{2}+\|h(s,0,0)\|^{2})ds+\int_0^T|g(s,0)|^{2}dA_s\right]\nonumber\\
&&+C\E\left(\int_0^T|Y^{\varepsilon}|^{2}\|Z_t^\varepsilon\|^2 dt\right)^{1/2}
+C\E\left(\int_0^T|Y^{\varepsilon}|^{2}\|h(s,Y_s^\varepsilon,Z_s^\varepsilon)\|^2 dt\right)^{1/2}.
\end{eqnarray}
We estimate the last term as follows
\begin{eqnarray*}
&&C\E\left(\int_0^T|Y^{\varepsilon}_s|^{2}\|h(s,Y_s^\varepsilon,Z_s^\varepsilon)\|^2 ds\right)^{1/2}\\
&\leq &\frac{1}{4}\E\left(\sup_{0\leq t\leq T}|Y_s^\varepsilon|^2\right)+C\E\left(\int_0^T\|h(s,Y_s^\varepsilon,Z_s^\varepsilon)\|^2 ds\right)\\
&\leq & \frac{1}{4}\E\left(\sup_{0\leq t\leq T}|Y_s^\varepsilon|^2\right)+C\E\left[|\xi|^{2}+\int_0^T(|f(s,0,0)|^{2}+\|h(s,0,0)\|^{2})ds+\int_0^T|g(s,0)|^{2}dA_s\right]\\
&&+C\int_{0}^{T}\rho(\E(\sup_{0\leq u\leq s}|Y_u^\varepsilon|^2))ds.
\end{eqnarray*}
The second term on the right side of \eqref{equation3} is treated analogously to obtain
\begin{eqnarray*}
&&C\E\left(\int_0^T|Y^{\varepsilon}|^{2}\|Z_t^\varepsilon\|^2 dt\right)^{1/2}\\
&\leq&  \frac{1}{4}\E\left(\sup_{0\leq t\leq T}|Y_s^\varepsilon|^2\right)\\
&&+C\E\left[|\xi|^{2}+\int_0^T(|f(s,0,0)|^{2}+\|h(s,0,0)\|^{2})ds+\int_0^T|g(s,0)|^{2}dA_s\right].
\end{eqnarray*}
Therefore we deduce 
\begin{eqnarray*}
\E\left(\sup_{0\leq t\leq T}|Y_t^\varepsilon|^2\right)&\leq& C\E\left[|\xi|^{2}+\int_0^T(|f(s,0,0)|^{2}+\|h(s,0,0)\|^{2})ds+\int_0^T|g(s,0)|^{2}dA_s\right]\\
&&+\int_{0}^{T}\rho(\E(\sup_{0\leq u\leq s}|Y_u^\varepsilon|^2))ds,
\end{eqnarray*}
which by using again Bihari inequality provides
\begin{eqnarray*}
\E\left(\sup_{0\leq t\leq T}|Y_t^\varepsilon|^2\right)\leq C\E\left[|\xi|^{2}+\int_0^T(|f(s,0,0)|^{2}+\|h(s,0,0)\|^{2})ds+\int_0^T|g(s,0)|^{2}ds\right].
\end{eqnarray*}
Finally according to $(\bf H1)$ and $(\bf H2)$ we get \eqref{AStep1}.\\
{\it Step 2: A second priori estimate}\\
We have for all $t\in [0,T]$,  
\begin{eqnarray}\label{estisec}
\begin{array}{lll}
(a)&\displaystyle \E\left(\int_0^T|\nabla \varphi_\varepsilon(Y_t^\varepsilon)|^2dt+\int_0^T|\nabla\psi_\varepsilon(Y_t^\varepsilon)|^2dA_t\right)\leq C\Lambda,\\\\
(b)&\displaystyle \E\left(\varphi(J_\varepsilon(Y_t^\varepsilon))+
\psi(\bar{J}_\varepsilon(Y_t^\varepsilon))+\int_0^T\varphi(J_\varepsilon(Y_t^\varepsilon))dt+
\int_0^T\psi(\bar{J}_\varepsilon(Y_t^\varepsilon))dA_t\right)\leq C\Lambda,\\\\
(c)&\displaystyle \E\left(|Y_t^\varepsilon-J_\varepsilon(Y_t^\varepsilon)|^2+
|Y_t^\varepsilon-\bar{J}_\varepsilon(Y_t^\varepsilon)|^2\right)\leq
\varepsilon C\Lambda, \\\\
\end{array}
\end{eqnarray}
where 
\begin{eqnarray*}
	\Lambda=\E\left(|\xi|^2+\varphi(\xi)+\psi(\xi)+\int_0^T|\gamma_t|^2dt+\int_0^T|\eta_t|^2dA_t\right).
\end{eqnarray*}
Like in Pardoux and  R\u{a}\c{s}canu in \cite{PR} (see Proposition 2.2), For $t=t_0<t_1<\cdot\cdot\cdot\cdot<t_n=T$, where $t_i-t_{i-1}=\frac{1}{n}$, let write the subdifferential inequality
\begin{eqnarray*}
\varphi_{\varepsilon}(Y^{\varepsilon}_{t_{i+1}})\geq  \varphi_{\varepsilon}(Y^{\varepsilon}_{t_{i}})+\langle\nabla\varphi_{\varepsilon}(Y^{\varepsilon}_{t_{i}}),\,Y^{\varepsilon}_{t_{i+1}}-Y^{\varepsilon}_{t_{i}}\rangle
\end{eqnarray*}
and
\begin{eqnarray*}
\psi_{\varepsilon}(Y^{\varepsilon}_{t_{i+1}})\geq \psi_{\varepsilon}(Y^{\varepsilon}_{t_{i}})+\langle\nabla\psi_{\varepsilon}(Y^{\varepsilon}_{t_{i}}),\,Y^{\varepsilon}_{t_{i+1}}-Y^{\varepsilon}_{t_{i}}\rangle.
\end{eqnarray*}
Using the approximation equation \eqref{eq1}, summing up over $i$, and passing to the limit as $n$ goes to $+\infty$, we deduce
\begin{eqnarray}\label{seconesti}
&&\varphi_\varepsilon(Y_t^\varepsilon)
+\psi_\varepsilon(Y_t^\varepsilon)+\int_t^T|\nabla \varphi_\varepsilon(Y_s^\varepsilon)|^2 ds+
\int_t^T|\nabla \psi_\varepsilon(Y_s^\varepsilon)|^2dA_s\nonumber\\
&&+\int_t^T\langle\nabla \ \varphi_\varepsilon(Y_s^\varepsilon),\,\nabla
\psi_\varepsilon(Y_s^\varepsilon)\rangle(ds+dA_s)\nonumber\\
&\leq&\varphi_\varepsilon(\xi)+\psi_\varepsilon(\xi)+
\int_t^T\langle \nabla \varphi_\varepsilon(Y_s^\varepsilon)+\nabla
\psi_\varepsilon(Y_s^\varepsilon),\,f(s,Y_s^\varepsilon,Z_s^\varepsilon) \rangle ds
\nonumber\\
&&+ \int_t^T\langle \nabla\varphi_\varepsilon(Y_s^\varepsilon)+\nabla
\psi_\varepsilon(Y_s^\varepsilon)),\,g(s,Y_s^\varepsilon)\rangle dA_s+ \int_t^T\langle \nabla
\varphi_\varepsilon(Y_s^\varepsilon)+\nabla
\psi_\varepsilon(Y_s^\varepsilon)),\,h(s,Y_s^\varepsilon,Z_s^\varepsilon)\overleftarrow{dB}_s\rangle\nonumber\\
&&-\int_t^T\langle \nabla\varphi_\varepsilon(Y_s^\varepsilon)+\nabla\psi_\varepsilon(Y_s^\varepsilon),\,Z_s^\varepsilon dW_s\rangle.
\end{eqnarray}
From $(\bf H3)$, for $\theta =\varphi\;\mbox{or}\; \psi$ we get
\begin{eqnarray*}
\langle \nabla\theta_\varepsilon(y),\,f(s,y,z)\rangle \leq \frac{1}{4}|\nabla
\theta_\varepsilon(y)|^2+K(\gamma^2_s+|y|^2+\|z\|^2)
\end{eqnarray*}
and
\begin{eqnarray*}
\langle\nabla \theta_\varepsilon(y),\,g(s,y)\rangle\leq \frac{1}{4}|\nabla
\theta_\varepsilon(y)|^2+K(\eta^2_s+|y|^2).
\end{eqnarray*}
Hence taking expectation in \eqref{seconesti}, we obtain
\begin{eqnarray}\label{estia}
&&\E\left[\int_t^T|\nabla \varphi_\varepsilon(Y_s^\varepsilon)|^2ds+|\nabla \psi_\varepsilon(Y_s^\varepsilon)|^2dA_s)\right]\nonumber\\
&\leq&
C\E\left[\varphi(\xi)+\psi(\xi)+\sup_{0\leq t\leq T}|Y^{\varepsilon}_t|^2+\int_0^T\|Z^{\varepsilon}_t\|^2dt\right.\nonumber\\
&&\left.+\int_0^T|Y^{\varepsilon}_t|^2dA_t+\int_0^T\gamma^2_sds+\int_0^T\eta_s^2dA_s\right],
\end{eqnarray}
where we have used the fact that $\varphi_{\varepsilon}(Y^{\varepsilon}_t)+\psi_{\varepsilon}(Y^{\varepsilon}_t)>0$ and 
$\varphi_{\varepsilon}(\xi)+\psi_{\varepsilon}(\xi)\leq \varepsilon(\varphi(\xi)+\psi(\xi))$ successively. Therefore $(a)$ holds by combining \eqref{estia} with the result of Step 1.\\
$(b)$ From \eqref{seconesti}, we have 
\begin{eqnarray*}
\E\left[\varphi_\varepsilon(Y_s^\varepsilon)+\psi_\varepsilon(Y_s^\varepsilon)\right]&\leq& \E\left[\varepsilon(\varphi(\xi)+\psi(\xi))+\left(\frac{2}{\delta_1}+\frac{2}{\delta_2}\right)\int_t^T(|\nabla \varphi_\varepsilon(Y_s^\varepsilon)|^2ds+|\nabla \psi_\varepsilon(Y_s^\varepsilon)dA_s|^2)\right.\\
&&\left.+\delta_1\int_t^T|f(s,Y^{\varepsilon}_s,Z^{\varepsilon}_s)|^2ds+\delta_2\int_t^T|g(s,Y^{\varepsilon}_s)|^2dA_s\right].
\end{eqnarray*}
Since $\varphi(J^{\varepsilon}(Y^{\varepsilon}_t))\leq \varphi_{\varepsilon}(Y^{\varepsilon}_t)$ and 
$\psi(\bar{J}^{\varepsilon}(Y^{\varepsilon}_t))\leq \psi_{\varepsilon}(Y^{\varepsilon}_t)$, by the same arguments as before we obtain
\begin{eqnarray*}
\E[\varphi(J^{\varepsilon}(Y^{\varepsilon}_t))+\psi(\bar{J}^{\varepsilon}(Y^{\varepsilon}_t))]&\leq &C\E\left[\varphi(\xi)+\psi(\xi)+|\xi|^{2}+\int_0^T|f(s,0,0)|^2 ds+\int_0^T|g(s,0)|^2dA_s\right.\\
&&\left.+\int_0^T\|h(s,0,0)\|^2 ds+\int_0^T\gamma^2_sds+\int_0^T\eta^2 _sdA_s\right].
\end{eqnarray*}
The assumption $(c)$ is proved similarly since $\frac{1}{2\varepsilon}|J^{\varepsilon}(y)-y|^{2}\leq \varphi_{\varepsilon}(y)$ and $\frac{1}{2\varepsilon}|\bar{J}^{\varepsilon}(y)-y|^{2}\leq \psi_{\varepsilon}(y)$.\\
{\it Step 3: Convergence of the family of processes $(Y^{\varepsilon},Z^{\varepsilon})$}.\\
For arbitrary $\varepsilon,\, \delta >0$, we have
\begin{eqnarray}\label{equation2}
\E\left[\sup_{0\leq t\leq T}|Y_t^\varepsilon-Y_t^\delta|^2+\int^{T}_{0}\|Z_t^\varepsilon-Z_t^\delta\|^2dt\right]\leq C(\varepsilon+\delta)\Lambda.
\end{eqnarray}
Indeed, by generalized Itô's formula,
\begin{eqnarray*}
&&|Y_t^\varepsilon-Y_t^\delta|^2+2\int_{t}^{T}\langle Y^{\varepsilon}_s-Y^{\delta}_s,\,\nabla\varphi_{\varepsilon}(Y^{\varepsilon}_s)-\nabla\varphi_{\delta}(Y^{\delta}_s)\rangle ds\\&&+2\int_{t}^{T}\langle Y^{\varepsilon}_s-Y^{\delta}_s,\, \nabla\psi_{\varepsilon}(Y^{\varepsilon}_s)-\nabla\psi_{\delta}(Y^{\delta}_s)\rangle dA_s\\
&=& 2\int_{t}^{T}\langle Y^{\varepsilon}_s-Y^{\delta}_s,\, f(s,Y^{\varepsilon}_s,Z^{\varepsilon}_s)-f(s,Y^{\delta}_s,Z^{\delta}_s)\rangle ds+2\int_{t}^{T}\langle Y^{\varepsilon}_s-Y^{\delta}_s,\, g(s,Y^{\varepsilon}_s)-g(s,Y^{\delta}_s)\rangle  dA_s\\
&&-\int_{t}^{T}\|Z^{\varepsilon}_s-Z^{\delta}_s\|^2ds+\int_{t}^{T}\|h(s,Y^{\varepsilon}_s,Z^{\varepsilon}_s)-h(s,Y^{\delta}_s,Z^{\delta}_s\|^2ds-2\int_{t}^{T} \langle Y^{\varepsilon}_s-Y^{\delta}_s,\,(Z^{\varepsilon}_s-Z^{\delta}_s)dW_s\rangle\\
&&+2\int_{t}^{T}\langle Y^{\varepsilon}_s-Y^{\delta}_s,\, (h(s,Y^{\varepsilon}_s,Z^{\varepsilon}_s)-h(s,Y^{\delta}_s,Z^{\delta}_s))\overleftarrow{dB}_s\rangle.
\end{eqnarray*}
On the other hand by assumption $(\bf H3)$, we have
\begin{eqnarray*}
\begin{array}{lll}
2\langle Y^{\varepsilon}_s-Y^{\delta}_s),\,f(s,Y^{\varepsilon}_s,Z^{\varepsilon}_s)-f(s,Y^{\delta}_s,Z^{\delta}_s)\rangle \leq 
\frac{1}{r}|Y^{\varepsilon}_s-Y^{\delta}_s|^2+r\rho(|Y^{\varepsilon}_s-Y^{\delta}_s|^2)+rC\|Z^{\varepsilon}_s-Z^{\delta}_s\|^2,&\\\\\
2\langle Y^{\varepsilon}_s-Y^{\delta}_s,\,g(s,Y^{\varepsilon}_s)-g(s,Y^{\delta}_s)\rangle\leq 
\beta|Y^{\varepsilon}_s-Y^{\delta}_s|^2&\\
\mbox{and} &\\
\|h(s,Y^{\varepsilon}_s,Z^{\varepsilon}_s)-h(s,Y^{\delta}_s,Z^{\delta}_s\|\leq \rho(|Y^{\varepsilon}_s-Y^{\delta}_s|^2)+\alpha\|Z^{\varepsilon}_s-Z^{\delta}_s\|^2.&
\end{array}
\end{eqnarray*}
Since $\beta<0$, if we take $r=\frac{1-\alpha}{2C}$, we obtain
\begin{eqnarray*}
&&|Y_t^\varepsilon-Y_t^\delta|^2+\int_{t}^{T}\|Z^{\varepsilon}_s-Z^{\delta}_s\|^2ds\\
&\leq& \int_{t}^{T}\rho(|Y^{\varepsilon}_s-Y^{\delta}_s|^2)ds-2\int_{t}^{T}\langle Y^{\varepsilon}_s-Y^{\delta}_s,\,\nabla\varphi_{\varepsilon}(Y^{\varepsilon}_s)-\nabla\varphi_{\delta}(Y^{\delta}_s)\rangle ds\\
&&-2\int_{t}^{T}\langle Y^{\varepsilon}_s-Y^{\delta}_s,\,\nabla\psi_{\varepsilon}(Y^{\varepsilon}_s)-\nabla\psi_{\delta}(Y^{\delta}_s)\rangle dA_s
-2\int_{t}^{T}\langle Y^{\varepsilon}_s-Y^{\delta}_s,\,(Z^{\varepsilon}_s-Z^{\delta}_s)dW_s\rangle\\
&&+2\int_{t}^{T} \langle Y^{\varepsilon}_s-Y^{\delta}_s),\,(h(s,Y^{\varepsilon}_s,Z^{\varepsilon}_s)-h(s,Y^{\delta}_s,Z^{\delta}_s))\overleftarrow{dB}_s\rangle.
\end{eqnarray*}
By $(v)$ of Proposition \ref{pro2.1} it follows that
\begin{eqnarray*}
&&|Y_t^\varepsilon-Y_t^\delta|^2+\int_{t}^{T}\|Z^{\varepsilon}_s-Z^{\delta}_s\|^2ds\\
&\leq& \int_{t}^{T}\rho(|Y^{\varepsilon}_s-Y^{\delta}_s|^2)ds+2(\varepsilon+\delta)\int_{t}^{T}|\nabla \varphi_{\varepsilon}(Y^{\varepsilon}_s)||\nabla\varphi_{\delta}(Y^{\delta}_s)|ds\\
&&+2(\varepsilon+\delta)\int_{t}^{T}|\nabla \psi_{\varepsilon}(Y^{\varepsilon}_s)||\nabla\psi_{\delta}(Y^{\delta}_s)|dA_s-2\int_{t}^{T} \langle Y^{\varepsilon}_s-Y^{\delta}_s,\, (Z^{\varepsilon}_s-Z^{\delta}_s)dW_s\rangle\\
&&+2\int_{t}^{T}\langle Y^{\varepsilon}_s-Y^{\delta}_s,\,(h(s,Y^{\varepsilon}_s,Z^{\varepsilon}_s)-h(s,Y^{\delta}_s,Z^{\delta}_s))\overleftarrow{dB}_s\rangle.
\end{eqnarray*}
Now, from Step 2 $(a)$, we have
\begin{eqnarray}
2(\varepsilon+\delta)\left[\int_{t}^{T}|\nabla \varphi_{\varepsilon}(Y^{\varepsilon}_s)||\nabla\varphi_{\delta}(Y^{\delta}_s)|ds+\int_t^T|\nabla \psi_{\varepsilon}(Y^{\varepsilon}_s)||\nabla\psi_{\delta}(Y^{\delta}_s)|dA_s)\right]\leq (\varepsilon+\delta)\Lambda,
\end{eqnarray}
which implies that
\begin{eqnarray*}
\E(|Y_t^\varepsilon-Y_t^\delta|^2)+\E\left(\int_{0}^{T}\|Z^{\varepsilon}_s-Z^{\delta}_s\|^2ds\right)
&\leq & \int_{t}^{T}\rho(\E(|Y^{\varepsilon}_s-Y^{\delta}_s|^2))ds+(\varepsilon+\delta)\Lambda.
\end{eqnarray*}
Bihari-Lasalle inequality yields
\begin{eqnarray}
\sup_{0\leq t\leq T}\E(|Y_t^\varepsilon-Y_t^\delta|^2)+\E\left(\int_{0}^{T}\|Z^{\varepsilon}_s-Z^{\delta}_s\|^2ds\right)\leq C(\varepsilon+\delta)\Lambda.
\end{eqnarray}
The result follows from Burkholder-Davis-Gundy's inequality. Therefore the family of process $(Y^{\varepsilon},Z^{\varepsilon})$ is Cauchy family   in a Banach space $\mathcal{S}^{2}(\R^k)\times\mathcal{M}^{2}(\R^{d\times k})$. Then there exist $(Y,Z)\in \mathcal{S}^{2}(\R^k)\times\mathcal{M}^{2}(\R^{d\times k})$ such that $\displaystyle \lim_{\varepsilon\rightarrow 0}(Y^{\varepsilon}_t,Z^{\varepsilon}_t)=(Y_t,Z_t)$. Furthermore, from Step 2 we have
\begin{eqnarray}\label{conv}
\lim_{\varepsilon\rightarrow 0}J^{\varepsilon}(Y^{\varepsilon})=Y,\; \mbox{in}\; \mathcal{M}^{2}(\R^{k})\; \mbox{and}\;
\lim_{\varepsilon\rightarrow 0}\E(|J^{\varepsilon}(Y^{\varepsilon}_t)-Y_t|^2)=0, \,\; t\in[0,T].
\end{eqnarray}
Furthermore, for all let us set
\begin{eqnarray*}
\bar{U}^{\varepsilon}_t=\int_0^tU^{\varepsilon}_sds\;\;\;\; \mbox{and}\;\;\;\; 
\bar{V}^{\varepsilon}_t =\int_0^t V^{\varepsilon}_sdA_s,
\end{eqnarray*}
where $U^{\varepsilon}_t=\nabla\varphi_{\varepsilon}(Y^{\varepsilon}_t)$ and $V^{\varepsilon}_t=\nabla\varphi_{\varepsilon}(Y^{\varepsilon}_t)$.

It follows from approximating equation \eqref{eq1} and Step 3 that
\begin{eqnarray*}
\E\left[\sup_{0\leq t\leq T}(|\bar{U}^{\varepsilon}_t-\bar{U}^{\delta}_t|^2+|\bar{V}^{\varepsilon}_t-\bar{V}^{\delta}_t|^2)\right]\leq \E\left[\sup_{0\leq t\leq T}|Y_t^\varepsilon-Y_t^\delta|^2+\int^{T}_{0}\|Z_t^\varepsilon-Z_t^\delta\|^2dt\right].
\end{eqnarray*} 
Since the right side of above converges to $0$ as $\varepsilon,\, \delta\rightarrow 0$, $(\bar{U}^{\varepsilon},\bar{V}^{\varepsilon})$ is a Cauchy family and there exists a measurable process $(\bar{U}_t,\bar{V}_t)_{0\leq t\leq T}$ such that
\begin{eqnarray*}
\E\left[\sup_{0\leq t\leq T}(|\bar{U}^{\varepsilon}_t-\bar{U}_t|^2+\int_0^T|\bar{V}^{\varepsilon}_t-\bar{V}_t|^2)\right]\rightarrow 0\; \mbox{as}\; \varepsilon\rightarrow 0.
\end{eqnarray*}
{\it Step 4: A quadruplet of process $(Y,Z,U,V)$ solves the MGBDSDE \eqref{equation1}}.\\
From Step 2 there exists a constant independent of $\varepsilon$ such that
\begin{eqnarray*}
&&\sup_{\varepsilon}\E\left[\int_0^T|U^{\varepsilon}_t|^2dt+\int_0^T|V^{\varepsilon}_t|^2dA_t\right]\\
&\leq& \sup_{\varepsilon}\E\left[\int_0^T|\nabla\varphi_{\varepsilon}(Y^{\varepsilon}_t)|^2dt+|\nabla\psi_{\varepsilon}(Y^{\varepsilon}_t)|^2dA_t\right]\\
&\leq & C,
\end{eqnarray*}
which means that $\bar{U}^{\varepsilon}$ (resp. $\bar{V}^{\varepsilon}$) is  bounded in the Sobolev space $L^2(\Omega, H^{1}([0,T],dt))$  (resp. $L^2(\Omega, H^{1}([0,T],dA_t))$). Therefore converge weakly to $\bar{U}$ and $\bar{V}$ respectively.
In particular, $\bar{U}$ (resp. $\bar{V}$) is absolutely continuous which respect $dt$ (resp. $dA_t$), i.e. there exist two progressively measurable processes $U$ and $V$ such that
\begin{eqnarray*}
\bar{U}_t=\int_0^tU_sds\;\;\; \mbox{and}\;\; \; \bar{V}_t=\int_0^tV_sdA_s.
\end{eqnarray*}
Moreover by Fatou's Lemma, $U$ and $V$ belong respectively in $\mathcal{M}^{2}(\R^k)$ and $\mathcal{A}^2(\R^k)$. Assertion $(i)$ in Definition \ref{def2.1} is then proved. Let now prove $(ii)$ i.e. $(Y_t,U_t)\in\partial\varphi,\, d\P\otimes dt$-a.e and    
 $(Y_t,V_t)\in\partial\psi,\, d\P\otimes dA_t$-a.e on $[0,T]$. For this instance, for $a<b\leq T,\, u\in\mathcal{M}^{2}(\R^{k})$ and $v\in\mathcal{A}^{2}(\R^k)$, since $\bar{U}^{\varepsilon},\, \bar{V}^{\varepsilon}$ and $Y^{\varepsilon}$ converge uniformly to $\bar{U},\, \bar{V}$ and $Y$, we deduce from Lemma 5.8 in Gegout-Petit and Pardoux \cite{Geg} that
\begin{eqnarray}\label{ii1}
\begin{array}{rrr}
\displaystyle \int_a^b \langle U^{\varepsilon}_t,\, u_t-Y_t^{\varepsilon}\rangle dt\rightarrow\int_a^b\langle U_t,\, u_t-Y_t\rangle dt&\\
\mbox{and}&\\
\displaystyle \int_a^b \langle V^{\varepsilon}_t,\, v_t-Y_t^{\varepsilon}\rangle dA_t\rightarrow\int_a^b \langle V_t, v_t-Y_t\rangle  dA_t& 
\end{array}
\end{eqnarray}
as $\varepsilon$ goes to $0$
in probability, which together with \eqref{conv} imply
\begin{eqnarray}\label{ii2}
\begin{array}{rrr}
\displaystyle \int_a^b \langle U^{\varepsilon}_t,\,J^{\varepsilon}(Y_t^{\varepsilon})-Y_t^{\varepsilon}\rangle dt\rightarrow 0 &\\
\mbox{and}&\\
\displaystyle \int_a^b \langle V^{\varepsilon}_t,\, \bar{J}^{\varepsilon}(Y_t^{\varepsilon})-Y_t^{\varepsilon}\rangle dA_t\rightarrow 0& 
\end{array}
\end{eqnarray}
as $\varepsilon$ goes to $0$.
On the other hand, in virtue of $(ii)$ of Proposition \ref{pro2.1}, we have $U^{\varepsilon}_t\in\partial\varphi(J^{\varepsilon}(Y_t^{\varepsilon}))$ and $V^{\varepsilon}_t\in\partial\psi(\bar{J}^{\varepsilon}(Y_t^{\varepsilon}))$ which imply
\begin{eqnarray*} 
\begin{array}{rrr}
\langle U^{\varepsilon}_t,\,u_t-J^{\varepsilon}(Y_t^{\varepsilon})\rangle+\varphi(J^{\varepsilon}(Y_t^{\varepsilon}))\leq  \varphi(u_t),\;\;  d\P\otimes dt\mbox{-a.e}&\\
\mbox{and}&\\
\langle V^{\varepsilon}_t,\,v_t-J^{\varepsilon}(Y_t^{\varepsilon})\rangle+\psi(\bar{J}^{\varepsilon}(Y_t^{\varepsilon}))\leq  \psi(v_t)\;\;  d\P\otimes dA_t\mbox{-a.e}.& 
\end{array}
\end{eqnarray*}
Therefore we obtain
\begin{eqnarray}\label{ii3} 
\begin{array}{rrr}
\displaystyle \int_a^b \langle U^{\varepsilon}_t,\, u_t-J^{\varepsilon}(Y_t^{\varepsilon})\rangle  dt+\int_a^b\varphi(J^{\varepsilon}(Y_t^{\varepsilon}))dt\leq  \int_a^b\varphi(u_t)dt&\\
\mbox{and}&\\
\displaystyle \int_a^b \langle V^{\varepsilon}_t,\,v_t-J^{\varepsilon}(Y_t^{\varepsilon})\rangle dA_t+\int_a^b\psi(\bar{J}^{\varepsilon}(Y_t^{\varepsilon})) dA_t\leq  \int_a^b\psi(v_t)dA_t.&
\end{array}
\end{eqnarray}
Taking the $\liminf$ in \eqref{ii3}, we have
\begin{eqnarray*}
\begin{array}{rrr}
\displaystyle \int_a^b U_t(u_t-Y_t)dt+\int_a^b\varphi(Y_t)dt\leq  \int_a^b\varphi(u_t)dt&\\
\mbox{and}&\\
\displaystyle \int_a^b V_t(v_t-Y_t) dA_t+\int_a^b\psi(Y_t) dA_t\leq  \int_a^b\psi(v_t)dA_t& 
\end{array}
\end{eqnarray*}
in probability. We have used \eqref{ii2}, \eqref{ii3} and the fact that $\varphi$ and $\psi$ are lower semi continuous. As $a, b, u$ and $v$ be taken arbitrary we get
\begin{eqnarray*}
\begin{array}{rrr}
\langle U_t,\,u_t-Y_t)+\varphi(Y_t)\rangle \leq \varphi(u_t), & d\P\otimes dt\mbox{-a.e}\\
\mbox{and}&\\
\langle V_t,\,v_t-Y_t)\rangle+\psi(Y_t\rangle\leq  \psi(v_t),&  d\P\otimes dt\mbox{-a.e},
\end{array}
\end{eqnarray*}
which implies $(ii)$.\\
Finally taking limit in \eqref{eq1} yields $(iii)$ and ended the step 4 and also the proof of existence.\\
{(\bf ii)} {\bf Uniqueness}\\
Let $(Y,Z,U,V)$ and $(\tilde{Y},\tilde{Z},\tilde{U},\tilde{V})$ be two solutions of MGBDSDE \eqref{A}. In light of the computation used in Step 3, we get for all $t\in [0,T]$,
\begin{eqnarray}\label{equniq}
\E\left[\sup_{t\leq s\leq T}|Y_s-\tilde{Y}_s|^2+\int_{t}^{T}\|Z_s-\tilde{Z}_s\|ds\right]\leq C\int_{t}^{T}\rho(\E(\sup_{s\leq u\leq T}Y_u-\tilde{Y}_u|^{2}))ds.
\end{eqnarray} 
Next,  Bihari inequalities yields
\begin{eqnarray*}
\E\left[\sup_{t\leq s\leq T}|Y_s-\tilde{Y}_s|^2\right]=0,\;\; t\in [0,T]. 
\end{eqnarray*}
Particularly setting $t=0$, we obtain
\begin{eqnarray*}
\E\left[\sup_{0\leq s\leq T}|Y_s-\tilde{Y}_s|^2\right]=0. 
\end{eqnarray*}
In view of \eqref{equniq}, and since $\rho(0)=0$, we obtain
\begin{eqnarray*}
\E\left[\int_{t}^{T}\|Z_s-\tilde{Z}_s\|ds\right]=0, \;\; t\in [0,T].
\end{eqnarray*}
Particularly setting $t=0$, we obtain
\begin{eqnarray*}
\E\left[\int_{0}^{T}\|Z_s-\tilde{Z}_s\|ds\right]=0. 
\end{eqnarray*}
Finally we have $Y=\tilde{Y}$ and $Z=\tilde{Z}$.

\end{proof}

\subsection{Comparison principle for variational GBDSDEs under non-Lipschitz condition}
In this subsection, we only consider one-dimensional variational generalized BDSDEs, i.e., $k=1$. We consider the following variational GBDSDEs: $0\leq t \leq T$ for $i=1,\;2$
\begin{eqnarray}\label{C1}
Y^i_t+\int_t^T U^i_sds+\int_t^T V^i_s dA_s &=&\xi^i+\int_t^Tf^i(s,Y^i_s,Z^i_s)ds+\int_t^Tg^i(s,Y^i_s)dA_s\nonumber\\
&&+\int_t^T h(s,Y^i_s,Z^i_s)\overleftarrow{dB}_s-\int_t^T Z^i_s dW_s,
\end{eqnarray}
where $(Y^{i}_t,U^{i}_t)\in \partial \varphi$ and $(Y^i_t,V^i_t)\in \partial \psi$. 
\begin{remark}
Since for $i=1,2$, $(Y^{i}_t,U^{i}_t)\in \partial \varphi$ and $(Y^i_t,V^i_t)\in \partial \psi$, we have 
\begin{eqnarray}\label{C11}
(Y^2-Y^1)(U^2-U^1)\geq 0\;\;\:\mbox{and}\;\;\; (Y^2-Y^1)(V^2-V^1)\geq 0.
\end{eqnarray}
\end{remark}
 
If for $f^i,\; g^i,\;i=1,\,2$ and $h$ satisfy the conditions of Theorem \ref{thm3.1}, then there exist a unique quadruplet of measurable processes $(Y^i,U^i,V^i,Z^i)_{i=1,\, 2}$ solution of \eqref{C1}. Assume 
\begin{description}
\item [(H5)]
\begin{itemize}
\item [$(i)$] $\xi^1\leq \xi^2$, a.s., 
\item [$(ii)$]$f^1(t,Y^1_t,Z^1_t)\leq f^2(t,Y^1_t,Z^1_t)$ a.s. and $g^1(t,Y^1_t)\leq g^2(t,Y^1_t),\; d\P\otimes dA_t$-a.e. $t\in[0,T]$,
or 
\item [] $f^1(t,Y^2_t,Z^2_t)\leq f^2(t,Y^2_t,Z^2_t)$ a.s. and $g^1(t,Y^2_t)\leq g^2(t,Y^2_t),\; d\P\otimes dA_t$-a.e. 
\end{itemize} 
\end{description}
We have the following comparison result.
\begin{theorem}
Assume the conditions of Theorem \ref{thm3.1}. Let for $i=1,2,\,(Y^i,U^i,V^i,Z^i)$ be solutions of GBDSDE \eqref{C1}. If $({\bf H5})$ holds, then $Y^1_t\leq Y^2_t$ a.s. $\forall\; t\in[0,T]$. 
\end{theorem}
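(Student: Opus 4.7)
The plan is to apply a generalized It\^o formula to $(\hat Y_t^+)^2$ where $\hat Y := Y^1 - Y^2$, and then to exploit the subdifferential monotonicity \eqref{C11} together with the splitting suggested by $(\mathbf{H6})$ to absorb the various driver differences. Subtracting the two equations \eqref{C1} gives a BDSDE for $\hat Y$ with terminal value $\hat Y_T = \xi^1-\xi^2$ (so that $\hat Y_T^+ = 0$ by $(\mathbf H6)(i)$), driver differences $\Delta f_s := f^1(s,Y^1_s,Z^1_s) - f^2(s,Y^2_s,Z^2_s)$ and analogous $\Delta g_s,\,\Delta h_s$, the extra finite-variation drifts $-(U^1_s - U^2_s)\,ds$ and $-(V^1_s - V^2_s)\,dA_s$, the forward martingale $\hat Z\,dW$ and the backward martingale $\Delta h\,\overleftarrow{dB}$.

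Applying Pardoux--Peng's Lemma~1.3 to $\phi(\hat Y)$ with $\phi(x) = (x^+)^2$, justified by approximation by an increasing sequence of convex $C^2$ functions $\phi_n\uparrow\phi$, and then taking expectation, I obtain
\begin{eqnarray*}
\E[(\hat Y_t^+)^2] + \E\!\int_t^T \mathbf 1_{\{\hat Y_s>0\}}\|\hat Z_s\|^2 ds
&=& 2\E\!\int_t^T \hat Y_s^+ \Delta f_s\, ds + 2\E\!\int_t^T \hat Y_s^+ \Delta g_s\, dA_s \\
&& -\,2\E\!\int_t^T \hat Y_s^+(U^1_s-U^2_s)\,ds - 2\E\!\int_t^T \hat Y_s^+(V^1_s-V^2_s)\,dA_s\\
&& +\,\E\!\int_t^T \mathbf 1_{\{\hat Y_s>0\}}\|\Delta h_s\|^2 ds.
\end{eqnarray*}
By the subdifferential monotonicity \eqref{C11}, $\hat Y_s^+(U^1_s-U^2_s)\geq 0$ and $\hat Y_s^+(V^1_s-V^2_s)\geq 0$, so the sub-gradient contributions may be discarded. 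For the $\Delta f$ term I use the splitting $\Delta f_s = [f^1(s,Y^1_s,Z^1_s)-f^2(s,Y^1_s,Z^1_s)] + [f^2(s,Y^1_s,Z^1_s) - f^2(s,Y^2_s,Z^2_s)]$ (and analogously for $\Delta g$), noting that by $(\mathbf H6)(ii)$ the first bracket is $\leq 0$ on $\{\hat Y_s>0\}$ while the second is controlled by $(\mathbf H3)(ii)$--$(iv)$ via Young's inequality; choosing the Young parameter $\varepsilon$ small enough, the resulting coefficient $\varepsilon K + \alpha$ (with $\alpha<1$) of $\mathbf 1_{\{\hat Y>0\}}\|\hat Z\|^2$ on the right is strictly less than $1$, so that term is absorbed into the left.

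Applying Jensen's inequality to the concave $\rho$ (using $\rho(0)=0$ to pull the indicator $\mathbf 1_{\{\hat Y_s>0\}}$ inside $\rho$) yields an inequality of the form
\[
\E[(\hat Y_t^+)^2] \leq C\int_t^T \rho\bigl(\E[(\hat Y_s^+)^2]\bigr)\,ds + C\int_t^T \E[(\hat Y_s^+)^2]\,(ds + dA_s),
\]
from which Gronwall's lemma (on the linear part, with the mixed $ds+dA_s$ integrator) followed by Bihari's inequality, together with the condition $\int_{0^+}du/\rho(u)=+\infty$, force $\E[(\hat Y_t^+)^2]=0$ for every $t\in[0,T]$. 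Hence $Y^1_t\leq Y^2_t$ a.s.\ for all $t$. I expect the principal technical obstacle to be the rigorous justification of the generalized It\^o formula for the convex but not $C^2$ function $x\mapsto (x^+)^2$ in the doubly stochastic setting, where the forward $\int Z\,dW$ and backward $\int h\,\overleftarrow{dB}$ integrals contribute to the quadratic variation with opposite signs; the remedy is the smoothing argument above, where the nonnegativity of $\phi_n''$ is crucial for the $\|\Delta h\|^2$ term to appear with the correct sign after passage to the limit.
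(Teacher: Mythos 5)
Your argument is essentially the paper's own proof: the paper applies It\^o's formula to $e^{\mu t}|(Y^2-Y^1)^-|^2$ (the same quantity as your $(\hat Y^+_t)^2$), discards the subgradient contributions via \eqref{C11}, splits the driver differences using $(\mathbf{H6})(ii)$ (the paper works with the second alternative $f^1(t,Y^2_t,Z^2_t)\le f^2(t,Y^2_t,Z^2_t)$, you with the first --- the two cases are symmetric), controls the remaining increments by $(\mathbf{H3})$ with a Young parameter chosen so that the $\|\hat Z\|^2$ coefficient stays below $1$ thanks to $\alpha<1$, and concludes by Jensen's and Bihari's inequalities. One small caveat: after taking expectations the boundary term is $\E\int_t^T(\hat Y_s^+)^2\,dA_s$, not $\int_t^T\E[(\hat Y_s^+)^2]\,dA_s$, so a deterministic Gronwall lemma ``with the mixed $ds+dA_s$ integrator'' does not literally apply to the function $t\mapsto\E[(\hat Y_t^+)^2]$; the paper sidesteps this by using $(\mathbf{H3})(iv)$ with $\beta<0$ (its running convention), which makes the $g$-contribution nonpositive so that the $dA_s$ term can simply be dropped (alternatively one weights by $e^{\mu A_s}$).
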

The proof of this result follows the proof of Theorem 3.1 appear in \cite{Shial}.
\begin{proof}
Let us assume that $f^1(t,Y^2_t,Z^2_t)\leq f^2(t,Y^2_t,Z^2_t)$ and $g^1(t,Y^2_t)\leq g^2(t,Y^2_t)$, a.s., a.e. $t\in[0,T]$. Denoting $\overline{Y}_t=Y^2_t-Y^1_t,\; \overline{Z}_t=Z^2_t-Z^1_t,\; \overline{U}_t=U^2_t-U^1_t,\; \overline{V}_t=V^2_t-V^1_t$, it follows from  Itô formula to $e^{\mu t}|\overline{Y}^-_t|^2$ that
\begin{eqnarray}\label{C2}
&&|e^{\mu t}\overline{Y}^-_t|^2+\mu\int_t^Te^{\mu s}|\overline{Y}^-_s|^2ds-\int^T_te^{\mu s}\overline{Y}^-_s\overline{U}_sds-\int^T_te^{\mu s}\overline{Y}^-_s\overline{V}_sdA_s\nonumber\\
&=&|e^{\mu T}(\xi^2-\xi^1)^{-}|^2\nonumber\\
&&-\int^T_te^{\mu s}\overline{Y}^-_s(f^2(s,Y^2_s,Z^2_s)-f^1(s,Y^1_s,Z^1_s))ds-\int^T_te^{\mu s}\overline{Y}^-_s(g^2(s,Y^2_s)-g^1(s,Y^1_s))dA_s\nonumber\\
&&-\int^T_te^{\mu s}\overline{Y}^-_s(h(s,Y^2_s,Z^2_s)-h(s,Y^1_s,Z^1_s))\overleftarrow{dB}_s+ \int^T_te^{\mu s}\overline{Y}^-_s\overline{Z}_sdW_s\nonumber\\
&&+\int^T_t{\bf 1}_{\{\overline{Y}_s<0\}}e^{\mu s}|h(s,Y^2_s,Z^2_s)-h(s,Y^1_s,Z^1_s)|^2ds- \int^T_t{\bf 1}_{\{\overline{Y}<0\}}e^{\mu s}|\overline{Z}_s|^2ds\nonumber\\
&\leq &|e^{\mu T}(\xi^2-\xi^1)^{-}|^2\nonumber\\
&&+\int^T_te^{\mu s}\overline{Y}^-_s(f^1(s,Y^1_s,Z^1_s)-f^1(s,Y^2_s,Z^2_s))ds-\int^T_te^{\mu s}\overline{Y}^-_s(g^1(s,Y^2_s)-g^1(s,Y^1_s))dA_s\nonumber\\
&&+\int^T_te^{\mu s}\overline{Y}^-_s(h(s,Y^1_s,Z^1_s)-h(s,Y^2_s,Z^2_s))\overleftarrow{dB}_s+ \int^T_te^{\mu s}\overline{Y}^-_s\overline{Z}_sdW_s-\int^T_te^{\mu s+}\overline{Y}^-_sdL_s\nonumber\\
&&+\int^T_t{\bf 1}_{\{\overline{Y}_s<0\}}e^{\mu s}|h(s,Y^2_s,Z^2_s)-h(s,Y^1_s,Z^1_s)|^2ds- \int^T_t{\bf 1}_{\{\overline{Y}<0\}}e^{\mu s}|\overline{Z}_s|^2ds.
\end{eqnarray}
The last inequality is obtained thanks to 
\begin{eqnarray*}
f^2(s,Y^2_s,Z^2_s)-f^1(s,Y^2_s,Z^2_s)\geq 0
\\
g^2(s,Y^2_s)-f^1(s,Y^2_s)\geq 0,	
\end{eqnarray*}
which follows from $({\bf H5})$. Recall again $({\bf H4})$, we have $e^{\mu s}(\xi^2-\xi^1)\geq 0$ so that  
 \begin{eqnarray}
 	\E(|e^{\mu s}(\xi^2-\xi^1)^-|^2)=0.\label{C3}
 \end{eqnarray}
 Since $(Y^{i},Z^{i}),\; i=1,2$ are in $\mathcal{S}^{2}(\R)\times\mathbb{M}^{2}(\R^{d})$, we have 
 \begin{eqnarray}\label{C4}
 \E\left(\int_t^Te^{\mu s}\overline{Y}^-_s\overline{Z}_sdW_s\right)=0.
 \end{eqnarray}
 and 
 \begin{eqnarray}\label{C5}
 \E(\left(\int^T_te^{\mu s}\overline{Y}^-_s(h(s,Y^2_s,Z^2_s)-h(s,Y^1_s,Z^1_s))\overleftarrow{dB}_s\right)=0.
 \end{eqnarray}
 By Assumptions $({\bf H2})$  and the basic inequality $2ab\leq \delta a^2+\frac{1}{\delta}b^2$ we get
 \begin{eqnarray}\label{C6}
 \int^T_te^{\mu s}\overline{Y}^-_s(f^1(s,Y^1_s,Z^1_s)-f^1(s,Y^2_s,Z^2_s))ds &\leq & \delta\int_t^Te^{\mu s}|\overline{Y}^-_s|^2ds+\frac{1}{\delta}\int_t^Te^{\mu s}\rho(|\overline{Y}_s|^2){\bf 1}_{\{\overline{Y_s}<0\}}ds\nonumber\\
 &&+\frac{K}{\delta}\int_t^Te^{\mu s}|\overline{Z}_s|^2{\bf 1}_{\{\overline{Y_s}<0\}}ds,
 \end{eqnarray}
 
 \begin{eqnarray}\label{C7}
 -\int^T_te^{\mu s}\overline{Y}^-_s(g^1(s,Y^2_s)-g^1(s,Y^1_s))dA_s&\leq & \beta\int_t^Te^{\mu s}|\overline{Y}_s^-|^2{\bf 1}_{\{\overline{Y_s}<0\}}dA_s
 \end{eqnarray}
 and 
 \begin{eqnarray}\label{C8}
 &&\int^T_t{\bf 1}_{\{\overline{Y}_s<0\}}e^{\mu s}|h(s,Y^2_s,Z^2_s)-h(s,Y^1_s,Z^1_s)|^2ds\nonumber\\ &\leq & \int_t^Te^{\mu s}\rho(|\overline{Y}_s|^2){\bf 1}_{\{\overline{Y_s}<0\}}ds+\alpha\int_t^Te^{\mu s}|\overline{Z}_s|^2{\bf 1}_{\{\overline{Y_s}<0\}}ds.
 \end{eqnarray}
Putting \eqref{C3}-\eqref{C8} in \eqref{C2} we get
\begin{eqnarray*}
&&\E(e^{\mu t}|\overline{Y}_t^-|^2)+(\mu-\beta-\delta)\E\left(\int_t^Te^{\mu s}|\overline{Y}^-_s|^2ds\right)+\left(1-\alpha-\frac{K}{\delta}\right)\E\left(\int_t^Te^{\mu s}|\overline{Z}_s|^2{\bf 1}_{\{\overline{Y_s}<0\}}ds\right)\nonumber\\
&\leq &
\left(\frac{1}{\delta}+1\right)\E\left(\int^T_te^{\mu s}\rho(|\overline{Y}_s^-|^2)ds\right).
\end{eqnarray*}
Finally choosing $\mu>0$ and $\delta>0$ such that $\mu-\beta-\delta>0$ and $1-\alpha-\frac{K}{\delta}>0$, we have 
\begin{eqnarray*}
\E(e^{\mu t}|\overline{Y}_t^-|^2)&\leq &
C\E\left(\int^T_te^{\mu s}\rho(|\overline{Y}_s^-|^2)ds\right),
\end{eqnarray*}
which by using Fubini's theorem and Jensen's inequality leads to
\begin{eqnarray*}
\E(|\overline{Y}_t^-|^2)&\leq &
C\int^T_t\rho(\E(|\overline{Y}_s^-|^2))ds.
\end{eqnarray*}
Then we can use Bihari's inequality to obtain $\E(|\overline{Y}^-_t|^2)=0,\;\; \forall\, t\in[0,T]$, and so $Y^1_t\leq Y^2_t$, a. s., for all $t\in [0,T]$.
\end{proof}

\section{Stochastic viscosity solutions of variational SPDEs with a nonlinear Neumann-Dirichlet boundary condition}
In this section, we defined the notion of stochastic viscosity solution for variational SPDE \eqref{i1}. Next, via variational GBDSDEs studied in the previous section, we give  a probabilistic representation of this variational SPDE in a such stochastic viscosity sense.

\subsection{ Preliminaries and definitions}

Let ${\bf F}^{B}=\{\mathcal{F}_{t,T}^{B}\}_{0\leq t\leq T}$ be the backward filtration defined in Section 2. We also denote by
${\mathcal{M}}^{B}_{0,T}$ all ${\bf F}^{B}$-stopping
times $\tau$ such $0\leq \tau\leq T$, a.s.  For generic Euclidean spaces $E$ and
$E_{1}$, we introduce the following spaces:
\begin{enumerate}
\item $\mathcal{C}^{k,n}([0,T]\times
E; E_{1})$ denotes the space of all $E_{1}$-valued functions
defined on $[0,T]\times E$ which are $k$-times continuously
differentiable in $t$ and $n$-times continuously differentiable in
$x$; and $\mathcal{C}^{k,n}_{b}([0,T]\times E; E_{1})$ denotes the
subspace of $\mathcal{C}^{k,n}([0,T]\times E; E_{1})$ in which all  function have uniformly bounded partial derivatives.
\item $\mathcal{C}^{k,n}({\bf F}^{B},[0,T]\times E; E_{1})$
(resp.$\mathcal{C}^{k,n}_{b}({\bf F}^{B},[0,T]\times E; E_{1})$) is
the space of all random fields $\gamma\in
\mathcal{C}^{k,n}([0,T]\times E; E_{1})$ (resp.
$\mathcal{C}^{k,n}([0,T]\times E; E_{1})$), such
that for fixed $x\in E$, \newline the mapping
$\displaystyle{(t,\omega)\rightarrow \gamma(t,\omega,x)}$ is
${\bf F}^{B}$-progressively measurable.
\item For a real number $ p\geq 1$, let
$L^{p}(\mathcal{F}_T;E)$ be a set of all $E$-valued,
$\mathcal{F}_T$-measurable random variable $\xi$ such that $
\E|\xi|^{p}<+\infty$.
\end{enumerate}
For $(t,x,y)\in[0,T]\times\R^{d}\times\R^k$, we denote
$D_{x}=(\frac{\partial}{\partial
x_{1}},....,\frac{\partial}{\partial x_{d}}),\,
D_{xx}=(\partial^{2}_{x_{i}x_{j}})_{i,j=1}^{d}$,
$D_{y}=\frac{\partial}{\partial y}, \,\
D_{t}=\frac{\partial}{\partial t}$.  Let $\Theta$ and $\Gamma$ be  defined respectively by
\begin{eqnarray*}
\Theta=\{x\in \R^d:\;\;\phi(x)>0\},\;\;\; Bd(\Theta)=\{x\in\R^d:\;\;\phi(x)=0\},\; \mbox{for any}\; \phi\in\mathcal{C}^{2}_b(\R^{d})
\end{eqnarray*}
and 
\begin{eqnarray*}
\Gamma u(t,x)=\sum_{j=1}^{d}\frac{\partial\phi}{\partial x_j}(x)\frac{\partial u}{\partial x_j}(t,x)=\langle \nabla\phi(x),\nabla u(t,x)\rangle,\, \mbox{for all}\; x\in Bd(\Theta)
\end{eqnarray*}.

For example let $\phi:\R^d\to \R$ defined as follows: for $x=(x_1,\cdot,\cdot\cdot,x_d),\;\; \phi(x)=x_1$. Hence
\begin{eqnarray*}
\Theta=\{x\in \R^d:\;\;x_1>0\},\;\;\; Bd(\Theta)=\{x\in\R^d:x_1=0\}\simeq \R^{d-1}.
\end{eqnarray*} 
Moreover, 
\begin{eqnarray*}
\Gamma u(t,x)=\frac{\partial u}{\partial x_1}(t,x)=\langle e_1,\nabla u(t,x)\rangle,
\end{eqnarray*}
where $e_1=(1,0,\cdot\cdot\cdot,0)$.

We work in this section with the following assumptions:
\begin{description}
\item [(H6)] $f, g$ are functions defined from $\Omega\times[0,T]\times\overline{\Theta}\times\R$ to $\R$ such that for all $t\in[0,T],\,  (x,y)\mapsto  f(t,x,y)$ is $K$-Lipschitz and  $g$ is $K$-Lipschitz continuous in $t,x,y$.
\item [(H7)] The functions $\sigma : \R^d \to \R^{d\times d}$ and $b : \R^d \to \R^{d}$ are bounded and $K$-Lipschitz continuous.
\item [(H8)] The function $\chi: R^d\to  \R$ is continuous, such that for some constants $K, \ p > 0$,
\begin{eqnarray*}
|\chi(x)| + |\varphi(\chi(x))| + |\psi(\chi(x))|\leq K(1 + |x|^p).
\end{eqnarray*}
\item[(H9)] $h\in{\mathcal{C}}_{b}^{0,2,3}([0,T]\times\overline{\Theta}\times\R;\R^{l})$.
\end{description}
\begin{remark}
The Lipschitz condition on $g$ which respect time variable is necessary in order to obtain the continuity of the function $(t,x)\mapsto Y^{t,x}$ where $(t,x)$ is initial data of the forward SDE \eqref{eqmarkov1} (i.e $X^{t,x}_s=x$ for all $s\in[0,t]$).
\end{remark}

\subsection{Notion of stochastic viscosity solution of variational SPDEs with a nonlinear Neumann-Dirichlet boundary condition}

The aim of this subsection is to set up the notion of stochastic viscosity solution of variationalstochastic partial differential equations (VSPDEs, in short) with nonlinear
Neumann-Dirichlet boundary condition:
\begin{eqnarray}
\left\{
\begin{array}{l}
(i)\;\left(\frac{\partial u}{\partial t}(t,x)+
Lu(t,x)+f(t,x,u(t,x))+h(t,x,u(t,x))\frac{\partial\overleftarrow{B_t}}{\partial t}\right)\in\partial\varphi(u(t,x)),\,
(t,x)\in[0,T]\times\Theta,\\\\
(ii)\;\left(\frac{\partial u}{\partial
n}(t,x)+g(t,x,u(t,x))\right)\in\partial\psi(u(t,x)),\,(t,x)\in[0,T]\times Bd(\Theta), \\\\
(iii)\; u(T,x)=\chi(x),\,x\in\overline{\Theta}.
\end{array}\right.\label{SPVI}
\end{eqnarray}
Our method is purely probabilistic and follows the idea in \cite{BMa}. To this fact, let us recall their statement.
\begin{definition}
Let $\tau\in {\mathcal{M}}^{B}_{0,T}$, and
$\xi\in\mathcal{F}_{\tau,T}$. We say that a sequence of random
variables $(\tau_k,\xi_k)$ is a $(\tau,\xi)$-approximating sequence
if for all $k$, $(\tau_k,\xi_k)\in{\mathcal{M}}^{B}_{\infty}\times
L^{2}(\mathcal{F}_{\tau,T},\Theta)$ such that
\begin{itemize}
\item [(i)] $\xi_k\rightarrow\xi$  in probability;
\item [(ii)] either $\tau_k\uparrow\tau$ a.s., and $\tau_k<\tau$ on the set $\{\tau>0\}$; or $\tau_k\downarrow\tau$ a.s., and $\tau_k>\tau$ on the set $\{\tau<T\}$.
\end{itemize}
\end{definition}
\begin{definition}
Let $(\tau,\xi)\in \mathcal{M}_{0,T}^B\times
L^2\left(\mathcal{F}^{B}_{\tau,T}; \Theta\right)$ and $u\in
\mathcal{C}\left(\mathbf{F}^B, [0,T]\times
\overline{\Theta}\right)$. A triplet of $(a, p,X)$ is called a
stochastic $h$-superjet of $u$ at $(\tau,\xi)$ if $(a,p,X)$ is an $\R\times\R^d\times\mathcal{S}(d)$-valued, $\mathcal{F}_{\tau,T}^B$-measurable random vector,  such that setting 
$b=h(\tau,\xi,u(\tau,\xi)),\;c=(h\partial_uh)(\tau,\xi,u(\tau,\xi))$ and \newline $q=\partial_xh(\tau,\xi,u(\tau,\xi))+\partial_uh(\tau,\xi,u(\tau,\xi))p$, and for all $(\tau, \xi)$-approximating sequence $(\tau_k,\xi_k)$, we have
\begin{eqnarray}
u(\tau_k,\xi_k)&\leq& u(\tau,\xi)+a(\tau_k-\tau)+b(B_{\tau_k}-B_{\tau})+\frac{c}{2}(B_{\tau_k}-B_{\tau})^2+\langle p,\xi_k-\xi\rangle\nonumber\\
&&+\langle q,\xi_k-\xi\rangle(B_{\tau_k}-B_{\tau})+\frac{1}{2}\langle X(\xi_k-\xi),\xi_k-\xi\rangle\nonumber\\
&&+o(|\tau_k-\tau|)+o(|\xi_k-\xi|^2).\label{jet1}
\end{eqnarray}
We denote by $\mathcal{J}^{1,2,+}_{h} u(\tau,\xi)$ the set of all
stochastic $h$-superjet of $u$ at $(\tau, \xi)$. Similarly, the
triplet of $(a, p,X)$ is a stochastic $h$-subjet of $u$ at $(\tau,
\xi)$ if the inequality in \eqref{jet1} is
reversed and $\mathcal{J}^{1,2,-}_{h} u(\tau,\xi)$ denotes
the set of all stochastic $h$-subjet of $u$ at $(\tau,\xi)$.
\end{definition}
We define the stochastic viscosity solution of MSPDE \eqref{SPVI}. To simplify let us set
\begin{eqnarray*}
V_{f}(\tau,\xi,a,p,X)=-a-\frac{1}{2}{\rm Trace}(\sigma\sigma^*(\xi)X)-\langle
p,b(\xi)\rangle-f\left(\tau,\xi,u(\tau,\xi)\right).
\end{eqnarray*}
\begin{definition}\label{defvisco}
Let $u \in \mathcal{C}\left(\mathbf{F}^B, [0,T]\times
\overline{\Theta}\right)$ satisfying $u\left(T,x\right)=\chi\left(x\right)$, for all $x\in
\overline{\Theta}$. Moreover, $\forall\; (\tau,\xi)\in\mathcal{M}_{0,T}^B\times L^2\left(\mathcal{F}^{B}_{\tau,T};\overline{\Theta}\right)$
\begin{eqnarray}\label{ViscM1}
u(\tau,\xi)\in {\rm Dom}(\varphi),\;\; \mbox{on}\; \; \left\{\xi\in
\Theta\right\}
\end{eqnarray}
and
\begin{eqnarray}\label{ViscM2}
u(\tau,\xi)\in  {\rm Dom}(\psi),\;\; \mbox{on}\; \; \left\{\xi\in
Bd(\Theta)\right\}.
\end{eqnarray}
 The function $u$ is called a stochastic viscosity subsolution of MSPDE \eqref{SPVI} if at any $(\tau,\xi)\in\mathcal{M}_{0,T}^B\times L^2\left(\mathcal{F}^{B}_{\tau,T};\overline{\Theta}\right)$, for any $(a, p,X)\in\mathcal{J}^{1,2,+}_{h} u(\tau,\xi)$, we have $\P$-a.s.
\begin{itemize}
\item[(a)] on $\left\{0<\tau<T\right\}\cap\left\{\xi\in
\Theta\right\}$
\begin{eqnarray}\label{E:def1}
V_{f}(\tau,\xi,a,p,X)+\varphi'_l(u(\tau,\xi))-\frac{1}{2}(h\partial_uh)(\tau,\xi,u(\tau,\xi))\leq 0;
\end{eqnarray}
\item[(b)] on $\left\{0<\tau<T\right\}\cap\left\{\xi\in
Bd(\Theta)\right\}$
\begin{eqnarray}
&&\min\left(V_{f}(\tau,\xi,a,p,X)+\varphi'_l(u(\tau,\xi))-\frac{1}{2}(h\partial_uh)(\tau,\xi,u(\tau,\xi))\right.,\nonumber\\
&&\left.\langle\nabla
\phi(\xi),p\rangle+\psi'_l(u(\tau,\xi))-g(\tau,\xi,u(\tau,\xi))\right)\leq 0. 
\label{E:viscosity01}
\end{eqnarray}
\end{itemize}
The function $u$ is called a stochastic viscosity supersolution of MSPDE \eqref{SPVI} if at any $(\tau,\xi)\in\mathcal{M}_{0,T}^B\times L^2\left(\mathcal{F}^{B}_{\tau,T};\overline{\Theta}\right)$, for any $(a, p,X)\in\mathcal{J}^{1,2,-}_{h} u(\tau,\xi)$, it hold $\P$-a.s.
\begin{itemize}
\item[(a)] on $\left\{0<\tau<T\right\}\cap\left\{\xi\in
\Theta\right\}$
\begin{equation}\label{E:def01}
V_f(\tau,\xi,a,p,X)+\varphi'_r(u(\tau,\xi))-\frac{1}{2}(h\partial_uh)(\tau,\xi,u(\tau,\xi))\geq 0;
\end{equation}
\item[(b)] on $\left\{0<\tau<T\right\}\cap\left\{\xi\in
Bd(\Theta)\right\}$
\begin{eqnarray}
&&\max\left(V_f(\tau,\xi,a,p,X)+\varphi'_r(u(\tau,\xi))-\frac{1}{2}(h\partial_uh)(\tau,\xi,u(\tau,\xi)),\right.\nonumber\\
&&\left.\langle\nabla \phi(\xi),p\rangle+\psi'_l(u(\tau,\xi))-g(\tau,\xi,u(\tau,\xi))\right)\geq 0  \label{E:viscosity001}
\end{eqnarray}
\end{itemize}
Finally, a random field $u \in \mathcal{C}\left(\mathbf{F}^B,
[0,T]\times \overline{\Theta}\right)$ is called a stochastic
viscosity solution of variational SPDE \eqref{SPVI} if it is both a stochastic viscosity subsolution and a
stochastic viscosity supersolution.
\end{definition}
\begin{remark}
We observe that if $f$ and $g$ are deterministic and $h\equiv 0$,
Definition \ref{defvisco} coincides with the definition of
(deterministic) viscosity solution of MPDE given by Maticiuc and R\u{a}\c{s}canu in \cite{MR}.
\end{remark}
We now state the notion of random viscosity solution which will be a bridge
link to the stochastic viscosity solution and its deterministic counterpart.
\begin{definition}
A random field $u\in C({\bf F}^B, [0,T]\times\overline{\Theta})$ is called an $\omega$-wise viscosity solution if
for $\P$-almost all $\omega\in \Omega,\;  u(\omega,\cdot,\cdot)$ is a (deterministic) viscosity solution of MSPDE associated to data $(f,g,0,\chi,\varphi,\psi)$.
\end{definition}

\subsection{Doss-Sussmann transformation}
In this subsection, using the Doss-Sussman transformation, our goal is to establish the link between the notion of stochastic viscosity solution for variational SPDE \eqref{SPVI} and the notion of viscosity solution for PDE with random coefficient. For this fact, let introduce the process $\eta\in C({\bf F}^B, [0,T]\times\R^n\times\R)$, as the unique solution of the stochastic differential equation written in Stratonovich form
\begin{eqnarray}
\eta(t,x,y)&=&y+\int_t^T\langle h(s,x,\eta(s,x,y)),
\circ \overleftarrow{dB}_{s}\rangle.\label{p1}
\end{eqnarray}
Note that due to the direction of Itô
integral, \eqref{p1} should be viewed as going from $T$ to $t$ (i.e
$y$ should be understood as the initial value). Under the assumption
$(\bf H9)$, the mapping $y\mapsto\eta(t,x,y)$ is a diffeomorphism for all $(t,x),\; \P$-a.s. (see Protter \cite{Pr}). Hence if we denote its $y$-inverse
  by $\varepsilon$, we get 
\begin{eqnarray*}
\varepsilon(t,x,y)=y-\int_t^TD_y\varepsilon(s,x,y)\langle h(s,x,\eta(s,x,y)),\, 
\circ \overleftarrow{dB}_{s}\rangle.
\end{eqnarray*}
Let us recall the following important proposition in
\cite{BM} (see Lemma 4.8).
\begin{proposition}\label{PropDoss}
Assume $(\bf H6)$-$(\bf H9)$ hold. Let
$(\tau,\xi)\in \mathcal{M}_{0,T}^B\times
L^2\left(\mathcal{F}^{B}_{\tau,T}; \overline{\Theta}\right)$, $u\in
\mathcal{C}\left(\mathbf{F}^B, [0,T]\times\overline{\Theta}\right)$
and $(a_u,X_u,p_u)\in\mathcal{J}^{1,2,+}_{h} u(\tau,\xi)$. Define
$v(\cdot,\cdot) = \varepsilon(\cdot,\cdot, u(\cdot,\cdot))$. Then,
for any $(\tau,\xi)$-approximating sequence $(\tau_k,\xi_k)$, and
for $\P$-a.e., it holds that
\begin{eqnarray*}
v(\tau_k,\xi_k)&\leq& v(\tau,\xi)+a_v(\tau_k-\tau)+b_v(B_{\tau_k}-B_{\tau})+\langle p_v,\xi_k-\xi\rangle\nonumber\\
&&+\langle q_v,\xi_k-\xi\rangle(B_{\tau_k}-B_{\tau})+\frac{1}{2}\langle X_v(\xi_k-\xi),\xi_k-\xi\rangle\nonumber\\
&&+o(|\tau_k-\tau|)+o(|\xi_k-\xi|^2).\label{jet}
\end{eqnarray*}
where
\begin{eqnarray*}
\left\{
\begin{array}{l}
a_v=D_y\varepsilon(\tau,\xi,u(\tau,\xi))a_u\\\\
p_v=D_y\varepsilon(\tau,\xi,u(\tau,\xi))p_u+D_x\varepsilon(\tau,\xi,u(\tau,\xi))\\\\
X_v=D_y\varepsilon(\tau,\xi,u(\tau,\xi))X_u+2D_{xy}\varepsilon(\tau,\xi,u(\tau,\xi))p^*_u+D_{xx}\varepsilon(\tau,\xi,u(\tau,\xi))
+D_{yy}\varepsilon(\tau,\xi,u(\tau,\xi))p_up_u^*
\end{array}\right.
\end{eqnarray*}
Namely, $(a_v,X_v,p_v)\in\mathcal{J}^{1,2,+}_{0} v(\tau,\xi)$

Conversely, let $(\tau,\xi)\in \mathcal{M}_{0,T}^B\times
L^2\left(\mathcal{F}^{B}_{\tau,T}; \Theta\right)$, $v\in
\mathcal{C}\left(\mathbf{F}^B, [0,T]\times\overline{\Theta}\right)$\newline
and $(a_v,X_v,p_v)\in\mathcal{J}^{1,2,+}_{0} v(\tau,\xi)$. Define
$u(\cdot,\cdot) = \eta(\cdot,\cdot, v(\cdot,\cdot))$. Then, the
triplet $(a_u,X_u,p_u)$ given by
\begin{eqnarray*}
\left\{
\begin{array}{l}
a_u=D_y\eta(\tau,\xi,v(\tau,\xi))a_v\\\\
p_u=D_y\eta(\tau,\xi,v(\tau,\xi))p_v+D_x\eta(\tau,\xi,v(\tau,\xi))\\\\
X_u=D_y\eta(\tau,\xi,v(\tau,\xi))X_v+2D_{xy}\eta(\tau,\xi,v(\tau,\xi))p^*_v+D_{xx}\eta(\tau,\xi,v(\tau,\xi))
+D_{yy}\eta(\tau,\xi,v(\tau,\xi))p_vp_v^*
\end{array}\right.
\end{eqnarray*}
satisfies $(a_u,X_u,p_u)\in\mathcal{J}^{1,2,+}_{h} u(\tau,\xi)$.
\end{proposition}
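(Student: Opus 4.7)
The plan is to unwind the Doss--Sussmann change of variable by Taylor-expanding the composition $v(\cdot,\cdot)=\varepsilon(\cdot,\cdot,u(\cdot,\cdot))$ and substituting the stochastic $h$-superjet expansion of $u$. The whole point of the definition of $\mathcal{J}^{1,2,+}_{h}$ is that the Brownian-increment terms in $u$'s inequality (those involving $b$, $c$, $q$) are designed precisely to match the Stratonovich differential of $\varepsilon$ along $(\tau,\xi,u(\tau,\xi))$, so that after composition every $B_{\tau_k}-B_\tau$ term cancels and only the deterministic $(a_v,p_v,X_v)$-expansion of a classical superjet survives.

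First I would record, by differentiating the identity $\varepsilon(t,x,\eta(t,x,y))=y$ and using \eqref{p1}, the Stratonovich SDE for $\varepsilon$: informally, $d\varepsilon(t,x,y)=-D_y\varepsilon(t,x,y)\langle h(t,x,\eta(t,x,y)),\circ\overleftarrow{dB}_t\rangle$. Evaluating this along a $(\tau,\xi)$-approximating sequence at the frozen third variable $y=u(\tau,\xi)$ and converting Stratonovich into It\^o gives, up to $o(|\tau_k-\tau|)$,
\begin{eqnarray*}
\varepsilon(\tau_k,\xi,u(\tau,\xi))-\varepsilon(\tau,\xi,u(\tau,\xi))
&=& -D_y\varepsilon\cdot h\,(B_{\tau_k}-B_\tau)\\
&&-\tfrac{1}{2}D_y\varepsilon\cdot (h\partial_u h)\,(B_{\tau_k}-B_\tau)^2+\cdots,
\end{eqnarray*}
with all partial derivatives evaluated at $(\tau,\xi,u(\tau,\xi))$, together with spatial Taylor terms in $\xi_k-\xi$ yielding $D_x\varepsilon$ and $D_{xx}\varepsilon$.

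Second I would expand the composition in the third argument:
\begin{eqnarray*}
v(\tau_k,\xi_k)&=&\varepsilon(\tau_k,\xi_k,u(\tau,\xi))+D_y\varepsilon\cdot(u(\tau_k,\xi_k)-u(\tau,\xi))\\
&&+\tfrac{1}{2}D_{yy}\varepsilon\cdot(u(\tau_k,\xi_k)-u(\tau,\xi))^2\\
&&+D_{xy}\varepsilon\cdot(\xi_k-\xi)(u(\tau_k,\xi_k)-u(\tau,\xi))+o,
\end{eqnarray*}
and substitute the stochastic $h$-superjet inequality for $u(\tau_k,\xi_k)-u(\tau,\xi)$. In the two mixed pieces I replace $u(\tau_k,\xi_k)-u(\tau,\xi)$ by its leading deterministic approximation $\langle p_u,\xi_k-\xi\rangle$ (other contributions are absorbed into the remainder). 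The Brownian terms coming through $D_y\varepsilon$ are $D_y\varepsilon\cdot b_u(B_{\tau_k}-B_\tau)$, $\tfrac12 D_y\varepsilon\cdot c_u(B_{\tau_k}-B_\tau)^2$, and $D_y\varepsilon\langle q_u,\xi_k-\xi\rangle(B_{\tau_k}-B_\tau)$. Using $b_u=h$, $c_u=h\partial_u h$ and $q_u=\partial_x h+\partial_u h\,p_u$, together with the identity $D_x\varepsilon+D_y\varepsilon\,\partial_x h\circ\eta=0$ derived by differentiating $\varepsilon(\cdot,\cdot,\eta(\cdot,\cdot,\cdot))=\mathrm{Id}$ in $x$, every Brownian contribution in $v$'s expansion cancels. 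Reading off the remaining coefficients against $(\tau_k-\tau)$, $\xi_k-\xi$ and $(\xi_k-\xi)(\xi_k-\xi)^*$ produces exactly the claimed formulas for $a_v$, $p_v$ and $X_v$, and the converse direction is entirely symmetric with $\eta$ in place of $\varepsilon$: the Stratonovich SDE \eqref{p1} for $\eta$ manufactures the three Brownian pieces required by the definition of $\mathcal{J}^{1,2,+}_{h}$, while the rest of the expansion yields the formulas for $a_u$, $p_u$, $X_u$.

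The main obstacle will be the bookkeeping in this cancellation: one must check that (i) the Stratonovich-to-It\^o conversions and the replacement of $u(\tau_k,\xi_k)-u(\tau,\xi)$ by $\langle p_u,\xi_k-\xi\rangle$ inside the mixed quadratic term generate only $o(|\tau_k-\tau|)+o(|\xi_k-\xi|^2)$ errors in probability along the approximating sequence, which relies on $\mathcal{C}^{0,2,3}_b$-regularity of $h$ and standard BDG bounds for the flow $\eta$; and (ii) the algebraic identities linking $D_x\varepsilon, D_y\varepsilon, D_{xx}\varepsilon, D_{yy}\varepsilon, D_{xy}\varepsilon$ with the corresponding derivatives of $\eta$ (obtained by implicit differentiation of $\varepsilon(\cdot,\cdot,\eta(\cdot,\cdot,\cdot))=\mathrm{Id}$) indeed deliver exactly the combinations $b_u, c_u, q_u$ that appear in \eqref{jet1}. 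Once these two verifications are carried out, matching coefficients in the expanded composition gives the stated formulas and places $(a_v,X_v,p_v)$ in $\mathcal{J}^{1,2,+}_{0}v(\tau,\xi)$.
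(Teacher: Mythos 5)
First, a point of reference: the paper does not prove this proposition at all --- it is quoted verbatim from \cite{BM} (Lemma 4.8), and ultimately goes back to Buckdahn--Ma's pathwise stochastic Taylor expansion machinery. So there is no in-paper proof to compare against; what can be assessed is whether your sketch would actually close. Your algebraic skeleton (Taylor-expand $\varepsilon(\tau_k,\xi_k,u(\tau_k,\xi_k))$ around $(\tau,\xi,u(\tau,\xi))$, feed in the $h$-superjet expansion of $u$, and let the Stratonovich dynamics of $\varepsilon$ kill every Brownian increment) is indeed the heart of the matter, and your identification of the identities $D_x\varepsilon+D_y\varepsilon\,\partial_xh\circ\eta=0$ etc.\ as the cancellation mechanism is correct.

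There is, however, a genuine gap in the substitution step. The superjet relation \eqref{jet1} is a \emph{one-sided} inequality, so it may only be inserted into terms that are monotone in the increment $\Delta u_k:=u(\tau_k,\xi_k)-u(\tau,\xi)$. For the linear term $D_y\varepsilon\cdot\Delta u_k$ this requires $D_y\varepsilon>0$, which you never verify (it holds because $D_y\eta$ solves a linear SDE and is a positive exponential, whence $D_y\varepsilon=1/(D_y\eta\circ\varepsilon)>0$); but for the quadratic term $\tfrac12 D_{yy}\varepsilon\,(\Delta u_k)^2$ and the mixed term $D_{xy}\varepsilon\,(\xi_k-\xi)\Delta u_k$, whose coefficients have no definite sign, a one-sided bound on $\Delta u_k$ gives no control at all: an upper bound cannot be squared. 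Worse, your proposed remedy --- replacing $\Delta u_k$ by $\langle p_u,\xi_k-\xi\rangle$ in these terms --- discards the leading contribution $b_u(B_{\tau_k}-B_{\tau})$, and the discarded pieces $\tfrac12 D_{yy}\varepsilon\,b_u^2(B_{\tau_k}-B_{\tau})^2$ and $D_{xy}\varepsilon\,b_u(\xi_k-\xi)(B_{\tau_k}-B_{\tau})$ are of order $|\tau_k-\tau|$ and $|\xi_k-\xi|\,|B_{\tau_k}-B_{\tau}|$ respectively, hence \emph{not} absorbable into $o(|\tau_k-\tau|)+o(|\xi_k-\xi|^2)$; they are precisely the counterparts needed to cancel $\tfrac12 D_y\varepsilon\,c_u(B_{\tau_k}-B_{\tau})^2$, $D_y\varepsilon\langle q_u,\xi_k-\xi\rangle(B_{\tau_k}-B_{\tau})$ and the It\^o corrections of the time expansion. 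The way Buckdahn--Ma close this is not by direct substitution but by first characterizing $\mathcal{J}^{1,2,+}_{h}u(\tau,\xi)$ through a genuinely smooth random test field touching $u$ from above at $(\tau,\xi)$ (their pathwise Taylor expansion); one then composes that smooth field with $\varepsilon$ using the monotonicity $D_y\varepsilon>0$ to preserve the touching-from-above property, and the formulas for $a_v,p_v,X_v$ drop out of an honest chain-rule/It\^o--Wentzell computation on the smooth object. Without that (or some equivalent two-sided control of $\Delta u_k$), the argument as written does not establish the inequality.
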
 
One of the key ideas of Buckdahn and Ma is to use the Doss-Sussman transformation to convert a SPDE to a PDE with random coefficients, so that the stochastic viscosity solution can be studied $\omega$-wisely. However, if we apply Doss-Sussman transformation to MSPDE \eqref{SPVI} the resulting equation is not necessarily the multivalued PDE studied by Maticiuc and R\u{a}\c{s}canu in \cite{MR}, because of the presence of the subdifferential term. For this reason we will need
the following version of Doss-Sussman transformation.

\begin{corollary}\label{corollary4.8}
Assume that the assumptions $(\bf H6)$-$(\bf H9)$ hold. Let
$(\tau,\xi)\in \mathcal{M}_{0,T}^B\times
L^2\left(\mathcal{F}^{B}_{\tau,T}; \overline{\Theta}\right)$, $u\in
\mathcal{C}\left(\mathbf{F}^B, [0,T]\times\overline{\Theta}\right)$
and  define $v(\cdot,\cdot) = \varepsilon(\cdot,\cdot,
u(\cdot,\cdot))$.
\begin{description}
\item [(1)] $u$ is a subsolution of VSPDE \eqref{SPVI}  if and only if $v(\cdot,\cdot)$ satisfies
\begin{itemize}
\item[\rm(a)] on the event $\left\{0<\tau<T\right\}\cap\left\{\xi\in
\Theta\right\}$
\begin{eqnarray}\label{E:def2}
V_{\widetilde{f}}(\tau,\xi,a_v,p_v,X_v)+\frac{\varphi'_l(\eta(\tau,\xi,v(\tau,\xi))}{D_y\eta(\tau,\xi,v(\tau,\xi))}\leq 0;
\end{eqnarray}
\item[\rm(b)] on $\left\{0<\tau<T\right\}\cap\left\{\xi\in
Bd(\Theta)\right\}$
\begin{eqnarray}
&&\min\left(V_{\widetilde{f}}(\tau,\xi,a_v,p_v,X_v)+\frac{\varphi'_l
(\eta(\tau,\xi,v(\tau,\xi))}{D_y\eta(\tau,\xi,v(\tau,\xi))}\right.,\nonumber\\
&&\left.\langle\nabla
\phi(\xi),p_v\rangle-\widetilde{g}(\tau,\xi,u(\tau,\xi))+\frac{\psi'_l
(\eta(\tau,\xi,v(\tau,\xi))}{D_y\eta(\tau,\xi,v(\tau,\xi))}\right)\leq
0. \label{E:viscosity02}
\end{eqnarray}
\end{itemize}
\item [(2)] $u$ is a supersolution of VSPDE \eqref{SPVI} if and only if $v(\cdot,\cdot)$
 satisfies
\begin{itemize}
\item[\rm(a)] on $\left\{0<\tau<T\right\}\cap\left\{\xi\in
\Theta\right\}$
\begin{eqnarray}\label{E:def02}
V_{\widetilde{f}}(\tau,\xi,a_v,p_v,X_v)+\frac{\varphi'_r(\eta(\tau,\xi,v(\tau,\xi))}{D_y\eta(\tau,\xi,v(\tau,\xi))}\geq 0;
\end{eqnarray}
\item[\rm(b)] on $\left\{0<\tau<T\right\}\cap\left\{\xi\in
Bd(\Theta)\right\}$

\begin{eqnarray}
&&\max\left(V_{\widetilde{f}}(\tau,\xi,a_v,p_v,X_v)+\frac{\varphi'_r
(\eta(\tau,\xi,v(\tau,\xi))}{D_y\eta(\tau,\xi,v(\tau,\xi))},\right.\nonumber\\
&&\left.\langle\nabla
\phi(\xi),p_v\rangle-\widetilde{g}(\tau,\xi,u(\tau,\xi))
+\frac{\psi'_r(\eta(\tau,\xi,v(\tau,\xi))}{D_y\eta(\tau,\xi,v(\tau,\xi))}\right)\geq
0. \label{E:viscosity002}
\end{eqnarray}
\end{itemize}
\end{description}
\end{corollary}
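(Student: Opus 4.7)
\medskip

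\noindent\textbf{Proof proposal.} The two assertions (1) and (2) are dual, so I focus on (1); the supersolution case is obtained by reversing inequalities, using $\mathcal{J}^{1,2,-}_h$ in place of $\mathcal{J}^{1,2,+}_h$, and replacing the left-derivatives $\varphi'_l, \psi'_l$ by the right-derivatives $\varphi'_r, \psi'_r$. The whole argument is essentially a substitution, and its engine is Proposition \ref{PropDoss}, which sets up a bijection $(a_u,p_u,X_u)\in\mathcal{J}^{1,2,+}_h u(\tau,\xi)\longleftrightarrow (a_v,p_v,X_v)\in\mathcal{J}^{1,2,+}_0 v(\tau,\xi)$ via the explicit chain-rule formulas recorded there. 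Note that $\eta(T,x,y)=y$ yields $D_y\eta(T,\cdot,\cdot)=1$, and since $y\mapsto \eta(t,x,y)$ is a diffeomorphism, $D_y\eta>0$ everywhere; this allows us to divide inequalities by $D_y\eta$ without reversing them.

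\medskip

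\noindent The first step is to handle the interior inequality. Starting from \eqref{E:def1} written for $(a_u,p_u,X_u)$, I substitute the relations $a_u=D_y\eta\,a_v$, $p_u=D_y\eta\,p_v+D_x\eta$, and the expression for $X_u$ in terms of $X_v$ and the second derivatives of $\eta$, evaluated at $(\tau,\xi,v(\tau,\xi))$, and use $u(\tau,\xi)=\eta(\tau,\xi,v(\tau,\xi))$. After dividing by $D_y\eta>0$, the linear-in-jet part rearranges into $V_{\widetilde f}(\tau,\xi,a_v,p_v,X_v)$, where $\widetilde f$ is defined by absorbing (i) the correction terms from $X_u$ involving $D_{xy}\eta, D_{xx}\eta, D_{yy}\eta$, (ii) the $\langle D_x\eta, b\rangle$ term coming from $p_u$, (iii) the original coefficient $f/D_y\eta$, and (iv) the Stratonovich compensator $\tfrac12(h\partial_u h)/D_y\eta$. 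The crucial point is that, because $\eta$ satisfies \eqref{p1} in Stratonovich form, this compensator is exactly what cancels the noise contribution in the transformed equation, making $\widetilde f$ depend on $\omega$ only through $\eta$ --- this is the classical Doss-Sussmann effect. The subdifferential term $\varphi'_l(u(\tau,\xi))=\varphi'_l(\eta(\tau,\xi,v(\tau,\xi)))$ simply picks up the factor $1/D_y\eta$ coming from the division, which is exactly the form displayed in \eqref{E:def2}.

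\medskip

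\noindent The second step is the boundary inequality. The interior summand in the $\min$ of \eqref{E:viscosity01} is transformed exactly as in the previous paragraph, producing the first argument of the $\min$ in \eqref{E:viscosity02}. For the Neumann summand, I substitute $p_u=D_y\eta\,p_v+D_x\eta$ in $\langle\nabla\phi(\xi),p_u\rangle+\psi'_l(u(\tau,\xi))-g(\tau,\xi,u(\tau,\xi))$ and divide by $D_y\eta>0$, which yields
\[
\langle\nabla\phi(\xi),p_v\rangle+\frac{\psi'_l(\eta(\tau,\xi,v(\tau,\xi)))}{D_y\eta(\tau,\xi,v(\tau,\xi))}-\widetilde g(\tau,\xi,u(\tau,\xi)),
\]
provided we set $\widetilde g(t,x,y):=\bigl(g(t,x,y)-\langle\nabla\phi(x),D_x\eta(t,x,\varepsilon(t,x,y))\rangle\bigr)/D_y\eta(t,x,\varepsilon(t,x,y))$. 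Because $D_y\eta>0$, the $\min$ structure is preserved, and \eqref{E:viscosity02} follows.

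\medskip

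\noindent Finally, the bijectivity part of Proposition \ref{PropDoss} guarantees that every $(a_v,p_v,X_v)\in\mathcal{J}^{1,2,+}_0 v(\tau,\xi)$ arises from some $(a_u,p_u,X_u)\in\mathcal{J}^{1,2,+}_h u(\tau,\xi)$, so the implications established above are actually equivalences, giving the ``if and only if'' in (1). The supersolution statement (2) is obtained by the identical computation with the inequalities reversed and the right-derivatives replacing the left-derivatives. The only delicate point in the argument is the algebraic Stratonovich cancellation alluded to in Step 1, which is standard and has already been used in \cite{BMa}; the new element here is merely checking that the subdifferential terms $\varphi'_l,\varphi'_r,\psi'_l,\psi'_r$ pass through the change of variables in the expected way, namely by picking up a single factor $1/D_y\eta(\tau,\xi,v(\tau,\xi))$, which is immediate from the chain rule in $y$ and the positivity of $D_y\eta$.
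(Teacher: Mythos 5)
Your proposal is correct and follows essentially the same route as the paper: both arguments invoke Proposition \ref{PropDoss} to transport a superjet $(a_u,p_u,X_u)\in\mathcal{J}^{1,2,+}_h u(\tau,\xi)$ to $(a_v,p_v,X_v)\in\mathcal{J}^{1,2,+}_0 v(\tau,\xi)$, substitute the chain-rule formulas into $V_f$ and the Neumann term, absorb the extra derivatives of $\eta$ and the compensator $\tfrac12 h\partial_u h$ into $\widetilde f$ and $\widetilde g$, and divide by $D_y\eta>0$ so that the subdifferential terms pick up the factor $1/D_y\eta$. Your explicit justification that $D_y\eta>0$ (hence the inequalities and the $\min$/$\max$ structure are preserved) is a useful detail the paper leaves implicit, but it does not change the argument.
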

\begin{proof}
Suppose $u$ a stochastic subsolution of VSPDE \eqref{SPVI}. Hence for $(\tau,\xi)\in \mathcal{M}_{0,T}^B\times
L^2\left(\mathcal{F}^{B}_{\tau,T};\overline{\Theta}\right)$ and $(a_u,
p_u,X_u)\in\mathcal{J}^{1,2,+}_{h}u(\tau,\xi)$, $u(\tau,\xi)\in  {\rm Dom}(\varphi)$ on $\left\{\xi\in\Theta\right\}$ and $u(\tau,\xi)\in {\rm Dom}(\psi)$ on $\left\{\xi\in Bd(\Theta)\right\}$. Moreover,
\begin{itemize}
\item[] on $\left\{0<\tau<T\right\}\cap\left\{\xi\in\Theta\right\}$
\begin{eqnarray}
V_{f}(\tau,\xi,a_u,p_u,X_u)+\varphi'_l(u(\tau,\xi))-\frac{1}{2}(h\partial_uh)(\tau,\xi,u(\tau,\xi))\leq 0;\label{z1}
\end{eqnarray}
and
\item[] on $\left\{0<\tau<T\right\}\cap\left\{\xi\in Bd(\Theta)\right\}$

\begin{eqnarray}\label{z2}
&&\min\left(V_{f}(\tau,\xi,a_u,p_u,X_u)+\varphi'_l(u(\tau,\xi))-\frac{1}{2}(h\partial_u
h)(\tau,\xi,u(\tau,\xi)),\right.\nonumber\\
&&\left.\langle\nabla
\phi(\xi),p_u\rangle+\psi'_l(u(\tau,\xi))-g(\tau,\xi,u(\tau,\xi))\right)\leq 0.
\end{eqnarray}
\end{itemize}
Using Proposition \ref{PropDoss}, there exist $(a_v,
p_v,X_v)\in\mathcal{J}^{1,2,+}_{0}v(\tau,\xi)$ such that 
\begin{eqnarray*}
&&V_{f}(\tau,\xi,a_u,p_u,X_u)\\
&=&D_y\eta(\tau,\xi,v(\tau,\xi))(-a_v- \frac{1}{2}Trace(\sigma\sigma^*(\xi)X_v)-\langle p_v,b(\xi)\rangle)\\
&&-\frac{1}{2}Trace(\sigma\sigma^*(\xi)D_{xx}\eta(\tau,\xi,v(\tau,\xi))\\
&&-\frac{1}{2}D_{yy}\eta(\tau,\xi,v(\tau,\xi))|\sigma^*(\xi)p_v|^2-\langle\sigma^{*}(\xi)D_{xy}\eta(\tau,\xi,v(\tau,\xi)),\sigma^*(\xi)p_v\rangle\\
&&-\langle D_{x}\eta(\tau,\xi,v(\tau,\xi)),b(\xi)\rangle-f(\tau, \xi, \eta(\tau,\xi,v(\tau,\xi)))
\end{eqnarray*}
and 
\begin{eqnarray*}
&&\langle\nabla \phi(\xi),p_u\rangle -g(\tau,\xi,\eta(\tau,\xi,v(\tau,\xi)))\\
&=&D_{y}\eta(\tau,\xi,v(\tau,\xi))\langle\nabla \phi(\xi),p_v\rangle
+\langle\nabla \phi(\xi),D_{x}\eta(\tau,\xi,v(\tau,\xi))\rangle-g(\tau,\xi,\eta(\tau,\xi,v(\tau,\xi))).
\end{eqnarray*}
Setting 
\begin{eqnarray*}
\widetilde{f}(t,x,y)&=&\frac{1}{D_y\eta(t,x,y)}\Bigg[f(t,x,\eta(t,x,y))-\frac{1}{2}(h\partial_u h)(t,x,\eta(t,x,y))+L_x\eta(t,x,y)\Bigg].
\end{eqnarray*}
and
\begin{eqnarray*}
\widetilde{g}(t,x,y)=\frac{1}{D_{y}\eta(t,x,y)}(g(t,x,y)-\langle\nabla\phi(\xi),D_{x}\eta(t,x,y)\rangle),
\end{eqnarray*}
it follows from \eqref{z1} and \eqref{z2} that
\begin{description}
\item on $\left\{0<\tau<T\right\}\cap\left\{\xi\in\Theta\right\}$,
\begin{eqnarray*}
V_{\widetilde{f}}(\tau,\xi,a_v,p_v,X_v)+\frac{\varphi'_l(u(\tau,\xi)}{D_y\eta(\tau,\xi,v(\tau,\xi))}\leq 0,
\end{eqnarray*}
\item on $\left\{0<\tau<T\right\}\cap\left\{\xi\in Bd(\Theta)\right\}$
\begin{eqnarray*}
&&\min\left(V_{\widetilde{f}}(\tau,\xi,a_v,p_v,X_v)+\frac{\varphi'_l(\eta(\tau,\xi,v(\tau,\xi)))}
{D_y\eta(\tau,\xi,v(\tau,\xi))}\right.
,\\
&&\left.\langle \nabla\phi(\xi),\,p_v\rangle-\widetilde{g}(\tau,\xi,v(\tau,\xi))+\frac{\psi'_l(\eta(\tau,\xi,v(\tau,\xi)))}{D_y\eta(\tau,\xi,v(\tau,\xi))}\right)\leq 0.
\end{eqnarray*}
\end{description}
The converse part of $(1)$ can be proved similarly. In the same manner one can show the second assertion $(2)$.
\end{proof}
\begin{example}
Let us consider a special case of $h$ by $h(s,x,y)= \beta y$, one get $\eta(t,x,y)=y\exp\left(\beta (B_T-B_t)\right)$ and $\varepsilon(t,x,y)=y\exp\left(-\beta(B_T-B_t)\right)$. In this context, 
\begin{eqnarray*}
v(t,x)=\varepsilon(t,x,u(t,x))=\exp\left(-\beta(B_T-B_t)\right)u(t,x),\; \forall\; (t,x)\in [0,T]\times\overline{\Theta}.
\end{eqnarray*}
On the other hand, for $(\tau,\xi)\in \mathcal{M}_{0,T}^B\times
L^2\left(\mathcal{F}^{B}_{\tau,T}\right))$, if $(a,p,A)$ belongs in $\mathcal{J}^{1,2,+}_{h}u(\tau,\xi)$,  we derive that $(\bar{a},\bar{p},\bar{A})$ defined by  
\begin{eqnarray*}
\bar{a} &=&\exp(-\beta(B_T-B_{\tau}))a,\\\\
\bar{p}&=&\exp(-\beta(B_T-B_{\tau}))p +\exp(-\beta(B_T-B_{\tau}))\nabla u(\tau,\xi),\\\\
\bar{A}&=& \exp(\beta (B_T-B_{\tau}))A+\exp(-\beta(B_T-B_{\tau}))D^2u(t,x),
\end{eqnarray*}
belongs to $\mathcal{J}^{1,2,+}_{0}v(\tau,\xi)$. Moreover, 
\begin{eqnarray*}
\widetilde{f}(t,x,y)&=&\exp(-\beta(B_T-B_t))\left[f(t,x,y\exp(\beta(B_T-B_t)))-\frac{1}{2}y\beta^2\exp(\beta(B_T-B_t))\right],
\end{eqnarray*}

\begin{eqnarray*}
\widetilde{g}(t,x,y)&=&\exp(-\beta(B_T-B_t))g(t,x,y\exp(\beta(B_T-B_t))).
\end{eqnarray*}
and inequalities \eqref{E:def2} and \eqref{E:viscosity02} become respectively
\begin{itemize}
\item[\rm(a)] on the event $\left\{0<\tau<T\right\}\cap\left\{\xi\in
\Theta\right\}$
\begin{eqnarray}\label{Z1}
V_{\widetilde{f}}(\tau,\xi,\bar{a},\bar{p},\bar{A})+\exp(-\beta(B_T-B_{\tau}))\varphi'_l(v(\tau,\xi)\exp(\beta(B_T-B_{\tau})))&\leq & 0;
\end{eqnarray}
\item[\rm(b)] on $\left\{0<\tau<T\right\}\cap\left\{\xi\in
Bd(\Theta)\right\}$
\begin{eqnarray}
&&\min\left(V_{\widetilde{f}}(\tau,\xi,\bar{a},\bar{p},\bar{A})+\exp(-\beta(B_T-B_{\tau}))\varphi'_l(v(\tau,\xi)\exp(\beta(B_T-B_{\tau})))\right.,\label{Z2}\\
&&\left.\langle\nabla
\phi(\xi),\bar{p}\rangle-\widetilde{g}(\tau,\xi,v(\tau,\xi))+\exp(-\beta(B_T-B_{\tau}))\psi'_l(v(\tau,\xi)\exp(\beta(B_T-B_{\tau})))\right)\leq
0,\nonumber 
\end{eqnarray}
\end{itemize}
In addition if $\varphi(x)={\bf I}_{[\alpha,\lambda]}$ and $\psi(x)={\bf I}_{[c,d]}(x)$ where
\begin{eqnarray*}
{\bf I }_{[a_1,a_2]}(x)=
\left\{
\begin{array}{ll}
0,&\mbox{if}\; x\in [a_1,a_2],\\\\
+\infty,&\mbox{if}\; x\notin [a_1,a_2]
\end{array}
\right.
\end{eqnarray*}
we derive that $\theta'_l(y)=\theta'_r(y)=0$ if $y\in ]a_1,a_2[$, $\theta'_l(a_1)=-\infty$, $\theta'_r(a_1)=0$, $\theta'_l(a_2)= 0$, $\theta'_r(a_2)=+\infty$. Therefore, inequality \eqref{Z1} and \eqref{Z2} is equivalent to 
\begin{itemize}
\item[\rm(a)] on the event $\left\{0<\tau<T\right\}\cap\left\{\xi\in
\Theta\right\}$
\begin{eqnarray*}\label{Z1}
&&V_{\widetilde{f}}(\tau,\xi,\bar{a},\bar{p},\bar{A},v)\leq  0\;\; \mbox{sur}\; ]\alpha\exp(-\beta(B_T-B_{\tau}),\lambda\exp(-\beta(B_T-B_{\tau})[\nonumber\\
&&V_{\widetilde{f}}(\tau,\xi,\bar{a},\bar{p},\bar{A},\alpha\exp(-\beta(B_T-B_{\tau}))\leq  0\\
&&V_{\widetilde{f}}(\tau,\xi,\bar{a},\bar{p},\bar{A},\lambda\exp(-\beta(B_T-B_{\tau}))\geq 0
\end{eqnarray*}
\item[\rm(b)] on $\left\{0<\tau<T\right\}\cap\left\{\xi\in
Bd(\Theta)\right\}$
\begin{eqnarray*}
&&\min\left(V_{\widetilde{f}}(\tau,\xi,\bar{a},\bar{p},\bar{A},v),\langle\nabla\phi(\xi),\bar{p}\rangle-\widetilde{g}(\tau,\xi,v(\tau,\xi))\right)\leq 0,\;\; \mbox{sur}\;\; ]\alpha\exp(-\beta(B_T-B_{\tau}),\lambda\exp(-\beta(B_T-B_{\tau})[\\
&&\min\left(V_{\widetilde{f}}(\tau,\xi,\bar{a},\bar{p},\bar{A},\alpha\exp(-\beta(B_T-B_{\tau})),\langle\nabla\phi(\xi),\bar{p}\rangle-\widetilde{g}(\tau,\xi,v(\tau,\xi))\right)\leq 0,\\
&&\min\left(V_{\widetilde{f}}(\tau,\xi,\bar{a},\bar{p},\bar{A},\lambda\exp(-\beta(B_T-B_{\tau})),\langle\nabla\phi(\xi),\bar{p}\rangle-\widetilde{g}(\tau,\xi,v(\tau,\xi))\right)\geq 0,
\end{eqnarray*}
\end{itemize}
\end{example}
\subsection{Probabilistic representation result for stochastic viscosity solution to variational SPDE}
The main objective of this subsection is to use the solution of VGBDSDE introduced in Section 2 in the Markovian framework to give a probabilistic representation (in stochastic viscosity sense) for the variational stochastic partial differential equations \eqref{SPVI}.

Roughly speaking, for $(t,x)\in [0,T]\times \overline{\Theta}$, let  consider the forward backward doubly SDE
\begin{eqnarray}\label{eqmarkov1}
\left\{
\begin{array}{l}
\displaystyle X_s^{t,x}=x+\int^{s}_tb(X^{t,x}_r)dr+\int_t^{s}\nabla\phi(X^{t,x}_r)dA_s^{t,x}+\int_t^{s}\sigma(r,X_r^{t,x})dW_r\\
s \mapsto A_s^{t,x} \;\; \mbox{is non-descreasing}\; \mbox{such that}\;
A^{t,x}_s =\int_t^{s\vee t}{\bf 1}_{\{X_r^{t,x}\in Bd(\Theta)\}}dA_r^{t,x},\; \;\; s\in[t,T].\\\\
(Y^{t,x}_s,U^{t,x}_s)\in \partial \varphi, \,\, d\overline{\mathbb{P}}\otimes ds, \ (Y^{t,x}_s,V^{t,x}_s)\in \partial \psi, \ \, d\overline{\mathbb{P}}\otimes dA_s,\\
\displaystyle Y_s^{t,x}+\int_s^TU^{t,x}_rdr+\int_s^TV^{t,x}_rdA^{t,x}_r=\chi(X^{t,x}_T)+\int^T_sf(r,X^{t,x}_r,Y^{t,x}_r,Z^{t,x}_r)dr\\
\displaystyle +\int_s^Tg(r,X^{t,x}_r,Y^{t,x}_r)dA_s^{t,x}+\int_s^T h(r,X_r^{t,x},Y^{t,x}_r,Z^{t,x}_r)\overleftarrow{dB}_s-\int_s^TZ^{t,x}_sdW_r, \;\; s\in[t,T].
\end{array}\right.
\end{eqnarray}
We recall that \eqref{eqmarkov1} admit a unique solution $\{(X^{t,x}_s,\,A^{t,x}_s,Y^{t,x}_s, Z^{t,x}_s, U^{t,x}_s,V^{t,x}_s); \; s\in [t,T]\}$,  (see \cite{LZ} for RSDE (resp. Theorem \ref{thm3.1} for VGBDSDEs)).

As announced in the introduction, the continuity of the different processes with respect to the initial data $(t,x)$ is very essential for the proof our result. The first concerns the forward SDE as follows.
\begin{proposition}\label{P:continuity00}
Assume $(\bf H7)$ holds. Then for any $\kappa,\; p\geq 2$, there exists a constant $ C>0$ such that for all $0\leq t<t'\leq T$
and $x,\,x'\in \overline{\Theta}$ we have,
\begin{description}
\item[(a)]
\begin{eqnarray*}
\mathbb{E}\left[\sup_{0\leq s\leq
T}\left|X^{t,x}_{s}-X^{t',x'}_{s}\right|^p+\sup_{0\leq s\leq
T}\left|A_s^{t,x}-A_{s}^{t',x'}\right|^p\right] \leq
C\left[ |t'-t|^{p/2}+|x-x'|^{p}\right]\label{continuity1}
\end{eqnarray*}
\item[(b)]
\begin{eqnarray*}
\mathbb{E}\left(\left|A_s^{t,x}\right|^p \right) \leq C(1+|(s\vee t)-t|^p).\label{bound1}
\end{eqnarray*}
\item[(c)]
\begin{eqnarray*}
\mathbb{E}\left(e^{\kappa A_s^{t,x}}\right) \leq C.\label{bound2}
\end{eqnarray*}
\item[(d)] $\displaystyle [0,T]\times\overline{\Theta}\ni(t,x)\mapsto \E\left(\int_t^Th_1(s,X_s^{t,x})ds\right)+\E\left(\int_t^Th_2(s,X_s^{t,x})dA^{t,x}_s\right)$ is continuous, for every continuous functions $h_1, \, h_2 : [0, T ] \times\overline{\Theta} \rightarrow\R$.
\end{description}
\end{proposition}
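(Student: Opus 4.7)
The plan is to establish the four assertions in the order $(b)\to(c)\to(a)\to(d)$, since each estimate is used as input for the next. For part $(b)$, I would apply It\^o's formula to $\phi(X_s^{t,x})$. Because $\nabla\phi$ is the inward unit normal on $Bd(\Theta)$ (so $|\nabla\phi|^2\equiv 1$ there) and the support of $dA^{t,x}$ is contained in $\{X_r^{t,x}\in Bd(\Theta)\}$, the local-time term simplifies: $\int_t^s|\nabla\phi(X_r^{t,x})|^2 dA_r^{t,x}=A_s^{t,x}$. Solving for $A_s^{t,x}$ gives
\begin{equation*}
A_s^{t,x}=\phi(X_s^{t,x})-\phi(x)-\int_t^s L\phi(X_r^{t,x})\,dr-\int_t^s\langle\nabla\phi(X_r^{t,x}),\sigma(r,X_r^{t,x})dW_r\rangle.
\end{equation*}
Since $\phi\in\mathcal{C}_b^{3}$ and $b,\sigma$ are bounded, taking $p$-th powers and applying Burkholder--Davis--Gundy to the martingale piece yields $(b)$. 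For $(c)$, the same representation exhibits $A_s^{t,x}$ as a bounded drift plus a continuous martingale with uniformly bounded quadratic variation; a direct Novikov-type bound on the exponential martingale then gives $\mathbb{E}[e^{\kappa A_s^{t,x}}]\le C$ uniformly in $s,t,x$.

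The core of the proof is $(a)$. Assuming $t\le t'$, I split the argument at $s=t'$ (for $s\in[t,t']$ only the $(t,x)$-process is evolving, and $(b)$ plus the Lipschitz/boundedness of $b,\sigma$ furnish the $|t'-t|^{p/2}$ contribution). For $s\ge t'$, I apply It\^o to $|X_s^{t,x}-X_s^{t',x'}|^p$. The delicate term is the reflection cross term
\begin{equation*}
\bigl\langle X_s^{t,x}-X_s^{t',x'},\;\nabla\phi(X_s^{t,x})dA_s^{t,x}-\nabla\phi(X_s^{t',x'})dA_s^{t',x'}\bigr\rangle.
\end{equation*}
Using that $\phi\ge 0$ on $\overline{\Theta}$, $\phi=0$ on $Bd(\Theta)$, and a second-order Taylor expansion of $\phi$, one gets the one-sided inequality $\langle x'-x,\nabla\phi(x)\rangle\ge -C|x-x'|^2$ for $x\in Bd(\Theta)$, $x'\in\overline{\Theta}$; applied symmetrically to the two terms on the support of $dA^{t,x}$ and $dA^{t',x'}$, the cross term is dominated by $C|X_s^{t,x}-X_s^{t',x'}|^2(dA_s^{t,x}+dA_s^{t',x'})$. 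A stochastic Gronwall lemma with the random drift measure $C(dA^{t,x}+dA^{t',x'})$, combined with BDG, then gives
\begin{equation*}
\mathbb{E}\sup_{s\le T}|X_s^{t,x}-X_s^{t',x'}|^p\le C\bigl(|t'-t|^{p/2}+|x-x'|^p\bigr)\,\mathbb{E}\bigl[e^{C(A_T^{t,x}+A_T^{t',x'})}\bigr],
\end{equation*}
and the expectation on the right is uniformly bounded by $(c)$. The corresponding bound on $\sup_s|A_s^{t,x}-A_s^{t',x'}|^p$ is then obtained by isolating the difference of the reflection integrals in the defining equation, noting that the drift, diffusion and (by what has just been shown) displacement differences are already controlled.

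For $(d)$, the $ds$-integral is handled by dominated convergence: the map $(t,x)\mapsto X^{t,x}$ is continuous in $L^{p}$ by $(a)$, hence (up to a subsequence) a.s., and $h_1$ is uniformly continuous on the compact set $[0,T]\times\overline{\Theta}$, so $h_1(s,X_s^{t,x})\to h_1(s,X_s^{t_0,x_0})$ with a uniform bound. For the $dA$-integral I decompose
\begin{equation*}
\int_t^T h_2(s,X^{t,x}_s)dA^{t,x}_s-\int_{t'}^T h_2(s,X^{t',x'}_s)dA^{t',x'}_s
\end{equation*}
into a term $\int[h_2(X^{t,x})-h_2(X^{t',x'})]dA^{t,x}$, controlled via uniform continuity of $h_2$ and the moment bound $(b)$ by Cauchy--Schwarz, plus a term $\int h_2(X^{t',x'})[dA^{t,x}-dA^{t',x'}]$, controlled by boundedness of $h_2$ and the $L^p$-estimate on $A^{t,x}-A^{t',x'}$ from $(a)$; small boundary contributions near $t,t'$ are absorbed by $(b)$. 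The main obstacle throughout is $(a)$: one must simultaneously tame the two singular reflection measures at different initial data, which is precisely what the one-sided convexity-type inequality for $\phi$ together with the exponential integrability $(c)$ accomplishes.
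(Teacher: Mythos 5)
Your proof is correct in substance and rests on the same two pillars as the paper's: the one-sided inequality $\langle x'-x,\nabla\phi(x)\rangle\ge -C|x-x'|^{2}$ for $x\in Bd(\Theta)$, $x'\in\overline{\Theta}$ (which is exactly the displayed inequality the paper quotes from Pardoux--R\u{a}\c{s}canu), and the representation $A^{t,x}_{s}=\phi(X^{t,x}_{s})-\phi(x)-\int L\phi\,dr-\int\nabla\phi\,\sigma\,dW$ obtained by applying It\^o's formula to $\phi(X^{t,x})$, from which $(b)$ and $(c)$ follow as you say. Where you diverge is in how the reflection cross term is absorbed in $(a)$: the paper applies It\^o's formula to the \emph{weighted} semimartingale $\exp[\delta(\phi(X^{t,x}_{s})+\phi(X^{t',x'}_{s}))]\,|X^{t,x}_{s}-X^{t',x'}_{s}|^{p}$, so that the $dA^{t,x}+dA^{t',x'}$ contribution coming from the differential of the exponential cancels the bad cross term outright; since $\phi$ is bounded, the weight is bounded above and below and a purely deterministic Gronwall argument finishes the job, with no exponential moments of $A$ needed for this part. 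You instead keep the cross term and invoke a stochastic Gronwall lemma with the random measure $C(dA^{t,x}+dA^{t',x'})$, paying with the factor $\E\bigl[e^{C(A_{T}^{t,x}+A_{T}^{t',x'})}\bigr]$ controlled by $(c)$. That route can be made to work, but your displayed inequality is written as if the expectation of a product factorizes into a product of expectations; it does not, and you must either use a genuine stochastic Gronwall lemma or apply H\"older/Cauchy--Schwarz, which costs an exponent (run the argument at exponent $2p$, say) --- harmless here because $(a)$ is asserted for all $p\ge 2$ and $(c)$ for all $\kappa$, but it should be said explicitly. The paper's exponential-weight trick is the cleaner of the two precisely because it sidesteps this issue. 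Your ordering $(b)\to(c)\to(a)\to(d)$ is internally consistent, and your treatment of $(d)$ is fine and in fact more detailed than the paper, which omits it entirely.
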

\begin{proof}
This proof follows the similar argument used in \cite{PZ}.  We apply Itô's
formula to the semimartingale
\begin{eqnarray*}
\exp\left[\delta\left(\phi(X^{t,x}_s)+\phi(X^{t',x'}_s)\right)\right]\left|X^{t,x}_s-X^{t',x'}_s\right|^{p},
\end{eqnarray*}
where $\delta$ is a strictly positive constant (which exists due to (\cite{PR}, Theorem 4.47) such
\begin{eqnarray*}
-\langle X^{t,x} _s- X^{t',x'}_s,\,\nabla \phi(X^{t,x}_s)dA^{t,x}_s-\nabla\phi(X^{t',x'}_s)dA^{t',x'}_s\rangle\leq \delta |X^{t,x} _s-X^{t',x'}_s|^2\left(dA^{t,x }_s+dA^{t',x'}_s\right),\;\; \mbox{ a.s.}
\end{eqnarray*} 
Hence by the standard SDE estimates we obtain
\begin{eqnarray*}
\E\left(\sup_{0\leq s\leq T}|X^{t,x}_s-X^{t',x'}_s|^p\right)\leq C\left(|t-t'|^{p/2}+|x-x'|^p+\E\int^{T}_{0}|X^{t,x}_s-X^{t',x'}_s|^pds\right).
\end{eqnarray*}
Next, by Itô formula we have
\begin{eqnarray*}
A^{t,x}_s=\phi(X^{t,x}_s)-\phi(x)-\int_t^{t\vee s}L\phi(X^{t,x}_r)dr-\int_t^{t\vee s}\nabla\phi(X^{t,x}_r)\sigma(X^{t,x}_r)dW_r,
\end{eqnarray*}
where $L$ is is a second order differential operator associated to SDE of \eqref{eqmarkov1} and defined by \eqref{Dop}. 
\end{proof} 
 The second result in this way concerns the continuity of the stochastic process $(Y^{t,x}_t)_{0\leq s \leq T}$ with respect to the initial data $(t,x)$. Since this result has already been established in \cite {PR1} when $h\equiv 0$ and $f$ and $g$ are deterministic, our proof follows their approach. However, because of the presence stochastic integral in our case, we use respectively the Meyer-Zheng and $S$-topology to drive our convergence result.   
\begin{proposition}\label{Prop}
Let $(Y^{t,x}_s, U^{t,x}_s,V^{t,x}_s,
Z^{t,x}_s)_{s\in[0,T]}$ be the unique solution of the VGBDSDE \eqref{eqmarkov1}.
Then, the random field $(t, x)\mapsto Y^{t,x}_t$
is a.s. continuous.
\end{proposition}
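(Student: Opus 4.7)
The plan is to fix an arbitrary sequence $(t_n,x_n)\to (t,x)$ in $[0,T]\times\overline{\Theta}$ and show that $Y^{t_n,x_n}_{t_n}\to Y^{t,x}_t$ in probability; a diagonal argument over a countable dense subset combined with path continuity then yields the claimed a.s.\ continuity of the random field. Writing $(Y^n,Z^n,U^n,V^n):=(Y^{t_n,x_n},Z^{t_n,x_n},U^{t_n,x_n},V^{t_n,x_n})$, the first task is to collect uniform a priori estimates: reproducing Steps 1--2 of the proof of Theorem \ref{thm3.1} with $x$ replaced by $x_n$ and using assumption $(\mathbf{H8})$ together with the uniform bounds in Proposition \ref{P:continuity00}, one obtains a constant $C$ independent of $n$ such that
\begin{eqnarray*}
\sup_{n}\E\left[\sup_{s\in[0,T]}|Y^{n}_s|^2+\int_0^T\|Z^{n}_s\|^2ds+\int_0^T|U^{n}_s|^2ds+\int_0^T|V^{n}_s|^2dA^{t_n,x_n}_s\right]\leq C.
\end{eqnarray*}

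Next I would extract a weakly convergent subsequence. By Proposition \ref{P:continuity00} (a)--(d), $X^{t_n,x_n}\to X^{t,x}$ and $A^{t_n,x_n}\to A^{t,x}$ strongly, uniformly on $[0,T]$ in $L^p$. The total-variation part of $Y^n$ (coming from $f\,ds$, $g\,dA$, $U\,ds$ and $V\,dA$) is uniformly bounded in $L^1$ by the estimate above, which is precisely the condition required to apply the Meyer--Zheng tightness criterion to $Y^n$; coupled with Jakubowski's $S$-topology for carrying the two martingale components through the limit, and with the uniform $L^2$ bounds that give weak relative compactness of $(Z^n,U^n,V^n)$ in the corresponding Hilbert spaces, one obtains a subsequence along which $(Y^n,Z^n,U^n,V^n)$ converges in law to a limit $(\bar Y,\bar Z,\bar U,\bar V)$, with $\bar Y$ admitting a continuous version.

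The third step is to identify the limit. Using the strong convergence of $X^n$ and $A^n$ and the regularity $(\mathbf{H6})$--$(\mathbf{H9})$ of $f,g,h,\chi$, the drift and boundary integrals pass to the limit directly, and the $dW$ and $\overleftarrow{dB}$ integrals converge through the $S$-topology stability of stochastic integrals with uniformly controlled characteristics. For the subdifferential inclusions, the combination of strong convergence $Y^n\to\bar Y$ with the weak convergence $U^n\rightharpoonup\bar U$, $V^n\rightharpoonup\bar V$ together with the graph-closedness of the maximal monotone operators $\partial\varphi,\partial\psi$ (exactly as in Step 4 of the proof of Theorem \ref{thm3.1}, via the arguments \eqref{ii1}--\eqref{ii3}) yields $(\bar Y_s,\bar U_s)\in\partial\varphi$ and $(\bar Y_s,\bar V_s)\in\partial\psi$. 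Hence $(\bar Y,\bar Z,\bar U,\bar V)$ is a solution of the VGBDSDE associated to $X^{t,x}$, and by the uniqueness part of Theorem \ref{thm3.1} it coincides with $(Y^{t,x},Z^{t,x},U^{t,x},V^{t,x})$. Uniqueness of the limit then promotes convergence of the whole sequence. Evaluating at $s=t_n$ and exploiting that $X^{t_n,x_n}_s=x_n$ for $s\leq t_n$ (so $Y^{t_n,x_n}_s=Y^{t_n,x_n}_{t_n}$ on that interval) together with the path continuity delivered by the limiting procedure, one concludes $Y^{t_n,x_n}_{t_n}\to Y^{t,x}_t$.

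The main obstacle will be the passage to the limit in the two stochastic integrals $\int Z^n\,dW$ and $\int h(\cdot,X^n,Y^n,Z^n)\overleftarrow{dB}$ when only weak convergence of $(Y^n,Z^n)$ is available; it is precisely this point that forces the use of Meyer--Zheng topology together with Jakubowski's $S$-topology rather than the more elementary $L^2$ arguments. An additional subtlety is that the $\overleftarrow{dB}$ integral is taken with respect to the decreasing filtration $\mathbf{F}^B$, so a time-reversal must be performed before invoking the standard semimartingale convergence tools, and the Lipschitz assumption on $g$ in the time variable from $(\mathbf{H6})$ is exactly what is needed to identify the limit of $\int_{t_n}^T g(r,X^n_r,Y^n_r)\,dA^{t_n,x_n}_r$ uniformly with respect to the shifting lower endpoint.
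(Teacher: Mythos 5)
Your proposal follows essentially the same route as the paper: uniform a priori estimates from Steps 1--2 of Theorem \ref{thm3.1}, tightness via the Meyer--Zheng criterion and Jakubowski's $S$-topology for the finite-variation and martingale parts, identification of the limit through the closedness of the maximal monotone graphs and the uniqueness statement of Theorem \ref{thm3.1}, and evaluation at $s=t_n$ using the constant extension of $X^{t_n,x_n}$ on $[0,t_n]$. The only cosmetic difference is that you package the conclusion as convergence in probability plus a diagonal argument, whereas the paper argues a.s.\ convergence along every subsequence; the substance is identical.
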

\begin{proof}
For an arbitrary $(t,x)\in[0,T]\times \overline{\Theta}$, let us  consider the sequence $(t_n,x_n)_{n\geq 1}$ of $[0,T]\times \overline{\Theta}$ such that $(t_n,x_n)\rightarrow(t,x)$ as $n\rightarrow +\infty$. To prove that $Y^{t_n,x_n}_{t_n}\rightarrow Y^{t,x}_{t}$ a.s as $n\rightarrow +\infty$, it suffice to show that any subsequence of $Y^{t_n,x_n}_{t_n}$ converges to $Y^{t,x}_{t}$. For $(t_{n_k},x_{n_k})$ be an arbitrary subsequence of $(t_n,x_n)$ that we will still note by $(t_n,x_n)$, we set $X^{n}=X^{t_{n},x_{n}},\; A^{n}=A^{t_{n},x_{n}},\;Y^{n}=Y^{t_{n},x_{n}},\; Z^{n}=Z^{t_{n},x_{n}},\; U^{n}=U^{t_{n},x_{n}},\;V^{n}=V^{t_{n},x_{n}}$. It is clear that the processes $(X^n,A^n)$ and $(Y^n,Z^n,U^n,V^n)$ satisfy respectively this two equations : 
\begin{eqnarray}
\left\{
\begin{array}{ll}
\displaystyle X_s^{n}=x_n+\int^{s\vee t_n}_{t_n} b\left(X_r^{n}\right)\,{\rm
d}r+\int^{s\vee t_n}_{t_n}\sigma\left(X_r^{n}\right)\,{\rm
d}{W}_r+\int^{s\vee t_n}_{t_n} \nabla \phi\left(X_r^{n}\right)\,{\rm
d}A_r^{n},\\
s\mapsto\ A^{n}_s\; \mbox{is non decreasing},\\
\displaystyle A^{n}_s=\int^{s\vee t_n}_{t_n}{\bf 1}_{\{X^{n}_s\in \partial\Theta\}}\,{\rm
d}A_r^{n}\quad \forall\, s\in [0,T]
\end{array}\right.
\label{rSDEbis}
\end{eqnarray}
and 
\begin{eqnarray}
Y^{n}_s+\int_s^TU^{n}_r\,{\rm
d}r+\int_s^TV^{n}_r\,{\rm
d}A^{n}_r& =&\chi(X^{n}_T)+\int_s^Tf_n(r,X^{n}_r,Y^{n}_r)\,{\rm
d}r+\int_s^Tg_n(r,X^{n}_r,Y^{n}_r)\,{\rm
d}A^{n}_r\nonumber\\
&&+\int_s^Th_n(r,X^{n}_r,Y^{n}_r)\,{\rm d}B_r-\int_s^TZ^{n}_r\,{\rm d}W_r,\ 0\leq s \leq T.
\label{eqmarkov1bis}
\end{eqnarray}
where 
\begin{eqnarray*}
f_{n}(r,x,y)={\bf 1}_{[t_{n},T]}f(r,x,y),\,g_{n}(r,x,y)={\bf 1}_{[t_{n},T]}g(r,x,y),\; h_{n}(r,x,y)={\bf 1}_{[t_{n},T]}h(r,x,y).
\end{eqnarray*}
We consider the extension $X^n_s=x_n,\; Y^n_s=Y^n_{t_n},\; A^n_s=U^n_s=V^n_s=Z^n_s=0$ if $s\in [0,\,t_n]$.
The first part of the proof study the $S$-tightness of the process $(X^n,A^n,Y^n, M^n, K^{1,n},K^{2,n})$ where for all $ s\in [0,\,T]$,
\begin{eqnarray*}
M^{n}_s=\int_{t_n}^{s\vee t_n}Z^n_rdW_r=\int_{t_n}^{s\vee t_n}\widehat{Z}^{t_n,x_n}_rdM_r^{X^n},\; K^{1,n}_s=\int_{t_n}^{s\vee t_n}U^n_rdr,\;\;\; K^{2,n}=\int_{t_n}^{s\vee t_n}V^{n}_rdA_r^{n},
\end{eqnarray*}
with $\displaystyle dM_r^{X^n}=\sigma(X^n_r)dW_r$. With the notation below, equation \eqref{eqmarkov1bis} is written
\begin{eqnarray*}
&&Y^{n}_s+(K^{1,n}_T-K^{1,n}_s)+(K^{2,n}_T-K^{2,n}_s)\nonumber\\&=&\chi(X^{n}_T)+\int_s^Tf_n(r,X^{n}_r,Y^{n}_r)\,{\rm
d}r+\int_s^Tg_n(r,X^{n}_r,Y^{n}_r)\,{\rm d}A^{n}_r\nonumber\\
&&+\int_s^Th_n(r,X^{n}_r,Y^{n}_r)\,{\rm d}B_r-(M^n_T-M^n_s),\ 0\leq s \leq T.
\end{eqnarray*}
It follows from the identical computation used in Step 1 and Step 2 of Theorem \ref{thm3.1} that
\begin{eqnarray}
&&\sup_{n\in\N}\E\left[\sup_{0\leq t\leq T}|Y^n_t|^2+\int_0^T|Y^n_s|^2A^n_s+\int_0^T|Z^n_s|^2ds\right.\nonumber\\
&&\left.+\int_0^T|U^n_s|^2ds+\int_0^T|V^n_s|^2dA^n_s\right]<+\infty.\label{bound}
\end{eqnarray}
and then there exists a constant independent of $n$ such that
\begin{eqnarray}
CV_T(Y^n)+CV_T(K^{1,n})+CV_T(K^{2,n})+CV_T(M^n)\leq C,\label{CV}
\end{eqnarray}
where for a càdlàg stochastic process $L$ such that $\E(|L_t|)<+\infty$,
\begin{eqnarray*}
CV_T(L)=\sup_{\pi}\sum_{i=0}^{N}\E\left[|\E(L_{t_{i+1}}-L_{t_{i}}|\mathcal{F}_{t_i})|\right],
\end{eqnarray*}
such that the supremum is taken over all partition $\pi: 0=t_0<t_1<\cdot\cdot\cdot<t_N=T$.  Next according to Theorem 16 in \cite{MR1}, it follows from \eqref{CV} that the sequence $(Y^n,M^n,K^{1,n},K^{2,n})$ is tight with respect to the $S$-topology. Therefore, there exists a subsequence still denoted by $(M^n,K^{1,n},K^{2,n})$ and the process $(\bar{Y}^n,\bar{M}^n,\bar{K}^{1,n},\bar{K}^{2,n})$ in $\mathcal{D}([0,T],\R)^4$ such that $(Y^n,M^n,K^{1,n},K^{2,n})$ and $(\bar{Y}^n,\bar{M}^n,\bar{K}^{1,n},\bar{K}^{2,n})$ are equal in law and $(\bar{Y}^n,\bar{M}^n,\bar{K}^{1,n},\bar{K}^{2,n})\rightarrow(\bar{Y},\bar{M},\bar{K}^1,\bar{K}^2)$ in sense of  $S$-topology. Finally adapted arguments used in the proof of Proposition 4.3 in \cite{KLA} or Lemma 5, Lemma 6 and Lemma 7 in \cite{MR1}, we  obtain
\begin{eqnarray*}
\bar{Y}=Y^{t,x},\;\bar{M}=M^{t,x},\;\bar{K}^1=K^{1,t,x},\;\bar{K}^2=K^{2,t,x}.
\end{eqnarray*}
In particular, since $Y^n_{t_n}$ and $\bar{Y}^n_{t_n}$ are $\mathcal{F}_{t_n}$-measurable, they can be regarded as a random variable defined on $\Omega_2$ and $Y^n_{t_n}=\bar{Y}^n_{t_n}$. Therefore since for the same reason as below $Y^{t,x}_t$ is a $\mathcal{F}_{t,T}$-measurable random defined on $\Omega_2,\; Y^{t_n,x_n}_{t_n}\rightarrow Y^{t,x}_t, \; \P_2$-a.s.
\end{proof}
Let us define
\begin{eqnarray}
u(t,x)=Y^{t,x}_t.
\end{eqnarray}
Then $u$ is random field such that $u(t,x)$ is $\mathcal{F}^{B}_{t,T}$-measurable for each $(t,x)\in[0,T]\times\overline{\Theta}$.

We are now ready to derive the main result of this section.
\begin{theorem}
Assume $(\bf H6)$-$(\bf H9)$ hold. Then,
the function $u$ defined above is a stochastic viscosity
solution of variational SPDE \eqref{SPVI}.
\end{theorem}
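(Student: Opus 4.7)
The plan is to follow the now-standard Doss-Sussmann reduction: convert the stochastic problem into a family of pathwise deterministic variational PDEs with random coefficients, apply the known probabilistic representation of Maticiuc and R\u{a}\c{s}canu \cite{MR} $\omega$-wise, and then lift back. Thanks to Corollary \ref{corollary4.8}, proving that $u$ is a stochastic viscosity solution of \eqref{SPVI} is equivalent to proving that the transformed random field $v(t,x):=\varepsilon(t,x,u(t,x))$ is, for $\P_2$-almost every $\omega_2\in\Omega_2$, a deterministic viscosity solution of the variational PDE with coefficients $\widetilde f$, $\widetilde g$ (as introduced in the proof of that corollary), terminal data $v(T,x)=\varepsilon(T,x,\chi(x))=\chi(x)$ (since $\eta(T,x,y)=y$), and subdifferentials of the composed convex functionals $\bar\varphi(s,x,\cdot):=\varphi\circ\eta(s,x,\cdot)$ and $\bar\psi(s,x,\cdot):=\psi\circ\eta(s,x,\cdot)$.

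First I would transform the Markovian FVGBDSDE \eqref{eqmarkov1} itself. Setting $\bar Y^{t,x}_s:=\varepsilon(s,X^{t,x}_s,Y^{t,x}_s)$ and applying an It\^o-Ventzell-type formula based on the Stratonovich equation \eqref{p1} satisfied by $\eta$, the Stratonovich correction generated by $\eta$ cancels exactly the backward It\^o term $\int h(s,X^{t,x}_s,Y^{t,x}_s)\,\overleftarrow{dB}_s$. One obtains, for $\P_2$-a.e.\ $\omega_2$, a classical variational generalized BSDE for the quadruplet $(\bar Y^{t,x},\bar Z^{t,x},\bar U^{t,x},\bar V^{t,x})$ driven only by $W$ and $A^{t,x}$, with random-but-$\omega_2$-frozen generators $\widetilde f(s,x,y;\omega_2)$ and $\widetilde g(s,x,y;\omega_2)$, and subdifferential inclusions with respect to $\bar\varphi(s,x,\cdot;\omega_2)$ and $\bar\psi(s,x,\cdot;\omega_2)$. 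Under $(\bf H6)$--$(\bf H9)$ the smoothness of $\eta$ transfers to $\widetilde f,\widetilde g$ the Lipschitz-type regularity in $(x,y)$, and the diffeomorphism property makes $\bar\varphi,\bar\psi$ proper convex lower-semicontinuous, so the $\omega_2$-frozen problem fits the framework of \cite{MR}.

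Applying the probabilistic representation theorem of Maticiuc and R\u{a}\c{s}canu pathwise in $\omega_2$ then yields that $v(t,x)=\bar Y^{t,x}_t$ is, for $\P_2$-a.e.\ $\omega_2$, a deterministic viscosity solution of the transformed variational PDE with nonlinear Neumann condition on $Bd(\Theta)$. The joint continuity of $(t,x)\mapsto v(t,x)$ required by the viscosity-solution framework follows by composing the continuity result of Proposition \ref{Prop} with the regularity of $\varepsilon$ from $(\bf H9)$. A final application of Corollary \ref{corollary4.8} in both directions translates the pathwise subsolution and supersolution inequalities \eqref{E:def2}--\eqref{E:viscosity02} and \eqref{E:def02}--\eqref{E:viscosity002} satisfied by $v$ back into the stochastic viscosity inequalities \eqref{E:def1}--\eqref{E:viscosity01} and \eqref{E:def01}--\eqref{E:viscosity001} for $u$, at any $(\tau,\xi)\in\mathcal{M}_{0,T}^B\times L^2(\mathcal{F}_{\tau,T}^B;\overline\Theta)$ and any element of $\mathcal{J}^{1,2,+}_h u(\tau,\xi)$ or $\mathcal{J}^{1,2,-}_h u(\tau,\xi)$, thereby establishing the claim.

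The main obstacle is the first step: the It\^o-Ventzell chain-rule computation must be carried out carefully because it mixes the Stratonovich dynamics of $\eta$ in $B$, the backward stochastic integral $\overleftarrow{dB}$ in \eqref{eqmarkov1}, and the boundary-reflection increment $\nabla\phi(X^{t,x}_r)\,dA^{t,x}_r$. One must also show that the multivalued inclusions $(Y^{t,x}_s,U^{t,x}_s)\in\partial\varphi$ and $(Y^{t,x}_s,V^{t,x}_s)\in\partial\psi$ transform cleanly into the analogous inclusions with $\partial_y\bar\varphi$ and $\partial_y\bar\psi$ for $(\bar Y^{t,x},\bar U^{t,x})$, $(\bar Y^{t,x},\bar V^{t,x})$; this uses that $\eta(s,x,\cdot)$ is a strictly increasing $\mathcal{C}^3$-diffeomorphism under $(\bf H9)$, so that the chain rule for subdifferentials of a convex function composed with a diffeomorphism applies and produces exactly the factors $1/D_y\eta$ seen in \eqref{E:def2}--\eqref{E:viscosity002}. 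Once this algebraic transformation is written out rigorously, the remainder of the proof is the concatenation of known results.
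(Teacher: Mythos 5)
There is a genuine gap at the heart of your plan, and it is precisely the obstruction the paper itself flags just before Corollary~\ref{corollary4.8}: if you apply the Doss--Sussmann transformation directly to the multivalued problem, the resulting equation is \emph{not} in general the multivalued PDE of Maticiuc and R\u{a}\c{s}canu \cite{MR}, because the subdifferential term does not transform into the subdifferential of a proper convex l.s.c.\ function. Your claim that ``the diffeomorphism property makes $\bar\varphi,\bar\psi$ proper convex lower-semicontinuous'' is the weak point: $\eta(s,x,\cdot)$ is a monotone $\mathcal{C}^3$ diffeomorphism but is not affine, and the composition of a convex function with a nonlinear increasing diffeomorphism is in general not convex (it also becomes $(s,x,\omega_2)$-dependent, which already takes you outside the framework of \cite{MR}). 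Consequently the $\omega_2$-frozen problem you construct does not fit the hypotheses of the representation theorem you want to invoke pathwise, and the ``chain rule for subdifferentials composed with a diffeomorphism'' you appeal to does not produce a maximal monotone operator of the required form. This is why the transformed inequalities \eqref{E:def2}--\eqref{E:viscosity002} are stated with the one-sided derivatives $\varphi'_l,\varphi'_r$ divided by $D_y\eta$, rather than with a subdifferential of a composed convex function.

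The paper circumvents this by reversing the order of operations: it first regularizes the multivalued term by Yosida approximation at the level of the Markovian GBDSDE \eqref{eqmarkovapp}, so that the approximating problems are ordinary (single-valued, Lipschitz) GBDSDEs to which the known representation of \cite{Bal} applies, giving that $v^{\delta}=\varepsilon(\cdot,\cdot,u^{\delta})$ is an $\omega_2$-wise viscosity solution of the non-multivalued SPDE \eqref{SPDE}. It then passes to the limit $\delta\to 0$ using the convergence $v^{\delta}\to v$ from Section~2, the Crandall--Ishii--Lions stability lemma \cite{CIL} to produce converging jets $(\tau_n,\xi_n,a^n_v,p^n_v,X^n_v)$, and a monotone test-point argument (multiplying by $\eta(\tau_n,\xi_n,v^{\delta_n}(\tau_n,\xi_n))-y$ and letting $y\uparrow\eta(\tau,\xi,v(\tau,\xi))$) to recover the left and right derivatives $\varphi'_l,\varphi'_r$ in \eqref{E:def2}--\eqref{E:viscosity002}, after which Corollary~\ref{corollary4.8} lifts the statement back to $u$. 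If you want to salvage your route, you would have to either prove a multivalued representation theorem for the $(s,x,\omega_2)$-dependent, non-convex composed potentials, or adopt the paper's penalize-then-pass-to-the-limit strategy; as written, the middle step of your argument does not go through.
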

\begin{proof}
First, since $u(t,x)=Y^{t,x}_t$, it follows from Proposition \ref{Prop} that $u\in C({\bf F}^B,\;[0,T]\times \overline{\Theta})$ and $u(T,x)=\chi(x)$. On the other hand, it follows from \eqref{eqmarkov1} that for all $(\tau,\xi)\in\mathcal{M}_{0,T}^B\times L^2\left(\mathcal{F}^{B}_{\tau,T};\overline{\Theta}\right)$\newline $u(\tau,\xi)\in {\rm Dom}(\varphi)$ on $\left\{\xi\in\Theta\right\}$ and $u(\tau,\xi)\in {\rm Dom}(\psi)$ on $\left\{\xi\in Bd(\Theta)\right\}$.

Thus it remains to show \eqref{E:def1}-\eqref{E:viscosity01} and \eqref{E:def01} -\eqref{E:viscosity001}. In other word, using
Corollary \ref{corollary4.8}, it suffices to prove that $v(t, x) =\varepsilon(t, x, u(t, x))$ satisfies \eqref{E:def2}-\eqref{E:viscosity02} and \eqref{E:def02}-\eqref{E:viscosity002}. In this fact, we are going to use the Yosida approximation of \eqref{eqmarkov1},
which was studied in Section 2.  For each $(t,x)\in[0,T]\times\overline{\Theta},\; \delta>0$, let $\{(Y^{t,x,\delta}_{s},Z^{t,x,\delta}_{s}),\,\ 0\leq s\leq T\}$ denote the solution of the following GBDSDE:
\begin{eqnarray}
&&Y^{t,x,\delta}_s+\int_s^T\nabla\varphi_{\delta}(Y^{t,x,\delta}_r)\,{\rm
d}r
+\int_s^T\nabla\psi_{\delta}(Y^{t,x,\delta}_r)\,{\rm
d}A_r\nonumber\\
&=&\chi(X^{t,x}_T)+\int_s^Tf(r,X^{t,x}_r,Y^{t,x,\delta}_r)\,{\rm
d}r+\int_s^Tg(r,X^{t,x}_r,Y^{t,x,\delta}_r)\,{\rm
d}A_r\nonumber\\
&&+\int_s^T h(r,X^{t,x}_r,Y^{t,x,\delta}_r) \,{\rm
d}B_r-\int_s^TZ^{t,x,\delta}_r\,{\rm d}W_r,\ t\leq s \leq
T.\label{eqmarkovapp}
\end{eqnarray}

Define $Y^{t,x,\delta}_t=u^{\delta}(t,x)$,  it is well known (see
Theorem 4.7, \cite{Bal}) that the function $v^{\delta}(t, x) = \varepsilon(t,
x, u^{\delta}(t, x))$ is an $\omega_2$-wise viscosity solution to the
following SPDE with nonlinear Dirichlet-Neumann boundary condition
\begin{eqnarray}
\left\{
\begin{array}{l}
{\rm(i)}\;\displaystyle\left(\frac{\partial v^{\delta}}{\partial
t}(t,x)-\left[
Lv^{\delta}(t,x)+\widetilde{f}_{\delta}(t,x,v^{\delta}(t,x))\right]\right)=0,\,\,\
(t,x)\in[0,T]\times\Theta,\\\\
{\rm(ii)}\;\displaystyle\frac{\partial v^{\delta}}{\partial
n}(t,x)+\widetilde{g}_{\delta}(t,x,v^{\delta}(t,x))=0,\,\,\ (t,x)\in[0,T]\times\partial\Theta, \\\\
{\rm(iii)}\; v(T,x)=\chi(x),\,\,\,\,\,\,\ x\in\overline{\Theta},
\end{array}\right.\label{SPDE}
\end{eqnarray}
where
$$\widetilde{f}_{\delta}(t,x,y,z)=\widetilde{f}(t,x,y)-\frac{\nabla\varphi_{\delta}(\eta(t,x,y))}{D_y\eta(t,x,y)}\;\;\;\mbox{and}
\;\;\;\widetilde{g}_{\delta}(t,x,y)=\widetilde{g}(t,x,y)-\frac{\nabla\psi_{\delta}(\eta(t,x,y))}{D_y\eta(t,x,y)}.$$
for all $(\tau,\xi)\in \in\mathcal{M}_{0,T}^B\times L^2\left(\mathcal{F}^{B}_{\tau,T};\overline{\Theta}\right)$, it follows from section 2 that (along a subsequence) $v^\delta(\tau,\xi)$ converge to $v(\tau,\xi)$ almost surely as $\delta $ goes to 0. Let $\omega_2\in\Omega_2$ be fixed such
\begin{eqnarray*}
|v^{\delta}(\tau(\omega_2),\xi(\omega_2))-v(\tau(\omega_2),\xi(\omega_2))|\rightarrow 0\;\;\; \mbox{as}\;\;\; \delta\rightarrow 0,
\end{eqnarray*}
and consider $(a_v,p_v,X_v)\in\mathcal{J}^{1,2,+}_{0}(v(\tau(\omega_2),\xi(\omega_2)))$. Then, it follows from Crandall-
Ishii-Lions \cite{CIL} that there exist sequences
\begin{eqnarray*}
\left\{
\begin{array}{ll}
\delta_n(\omega_2)\searrow 0,\\\\
(\tau_n(\omega_2),\xi_n(\omega_2))\in[0,T]\times\overline{\Theta},\\\\
(a_v^{n},p_v^{n},X_v^{n})\in\mathcal{J}^{1,2,+}_{0}(v^{\delta_n}(\tau_n(\omega_2),\xi_n(\omega_2)))
\end{array}
\right.
\end{eqnarray*}
such that
\begin{eqnarray*}
(\tau_n(\omega_2),\xi_n(\omega_2),a_v^{n},p_v^{n},X_v^{n},v^{\delta_n}(\tau_n(\omega_2),
\xi_n(\omega_2)))\rightarrow
(\tau(\omega_2),\xi(\omega_2),a_v,p_v,X_v,v(\tau(\omega_2),\xi(\omega_2))), \
\mbox{as} \ n\to \infty.
\end{eqnarray*}
Since $v^{\delta_n}(\omega_2,\cdot,\cdot)$ is a (deterministic) viscosity
solution to the PDE
$(\widetilde{f}_{\delta_n}(\omega_2,\cdot,\cdot,\cdot),0,\widetilde{g}_{\delta_n}(\cdot,\cdot),\chi)$,
we obtain
\begin{itemize}
\item [(a)] $(\tau_n(\omega_2),\xi_n(\omega_2))\in [0,T]\times\Theta$
\begin{eqnarray*}
&&V_{\widetilde{f}_{\delta_n}(\omega_2)}(\tau_n(\omega_2),\xi_n(\omega_2),a^n_v,X^n_v,p^n_v)+\frac{\nabla\varphi_{\delta_n}(\eta(\tau_n(\omega_2),\xi_n(\omega_2),v^{\delta_n}
(\tau_n(\omega_2),\xi_n(\omega_2))))}
{D_y\eta(\tau_n(\omega_2),\xi_n(\omega_2),v^{\delta_n}(\tau_n(\omega_2),\xi_n(\omega_2)))}\nonumber\\
&&\leq 0,\label{V01}
\end{eqnarray*}
\item [(b)]$(\tau_n(\omega_2),\xi_n(\omega_2))\in [0,T]\times\partial\Theta$
\begin{eqnarray*}
&&
\min\left(
\begin{array}{ll}V_{\widetilde{f}_{\delta_n}(\omega_2)}(\tau_n(\omega_2),\xi_n(\omega_2),a^n_v,X^n_v,p^n_v)+\frac{\nabla\varphi_{\delta_n}(\eta(\tau_n(\omega_2),\xi_n(\omega_2),v^{\delta_n(\tau_n}(\omega_2),\xi_n(\omega_2))))}{D_y\eta(\tau_n(\omega_2),\xi_n(\omega_2),v^{\delta_n}
(\tau_n(\omega_2),\xi_n(\omega_2)))},&\\\\
\langle\nabla\phi(\xi_n),\;p^n_v\rangle-\widetilde{g}_{\delta_n}(\omega_2)(\tau_n(\omega_2),\xi_n(\omega_2),v^{\delta_n}(\tau_n(\omega_2),\xi_n(\omega_2)))\\\\
+\frac{\nabla\psi_{\delta^n}(\eta(\tau_n(\omega_2),\xi_n(\omega_2),v^{\delta_n}(\tau_n(\omega_2),\xi_n(\omega))))}
{D_y\eta(\tau_n(\omega_2),\xi_n(\omega_2),v^{\delta_n}(\tau_n(\omega_2),\xi_n(\omega_2)))}&
\end{array}
\right)\nonumber\\
&&\leq
0\label{V02}.
\end{eqnarray*}
\end{itemize}
 In the sequel and for simplicity, we will omit writing $\omega_2$. Let take $y\in {\rm Dom}(\varphi)\cap {\rm Dom}(\psi)$ such
that $y\leq u(\tau,\xi)=\eta(\tau,\xi,v(\tau,\xi))$. Since $v^{\delta_n}$ converges uniformly in probability to $v$, there exists
$n_0 > 0$ such that  $y<\eta(\tau_n,\xi_n,v^{\delta_n}(\tau_n,\xi_n))$ for all $n\geq n_0$. Therefore, inequality \eqref{V01} and \eqref{V02} imply

\begin{eqnarray*}
&&(\eta(\tau_n,\xi_n,v^{\delta_n}(\tau_n,\xi_n))-y)V_{\widetilde{f}}(\tau_n,\xi_n,a_v^n,X_v^n,p_v^n)\\
&&\leq
\left(-\varphi_{\delta_n}(J_{\delta_n}(\eta(\tau_n,\xi_n,v^{\delta_n}(\tau_n,\xi_n))))+\varphi(y)\right)\frac{1}
{D_y\eta(\tau_n,\xi_n,v^{\delta_n}(\tau_n,\xi_n))},
\end{eqnarray*}
 and
\begin{eqnarray*}
\min\left(
\begin{array}{ll}
\eta(\tau_n,\xi_n,v^{\delta_n}(\tau_n,\xi_n))-y)V_{\widetilde{f}}(\tau_n,\xi_n,a_v^n,X_v^n,p_v^n)
+\frac{\varphi_{\delta_n}(J_{\delta_n}(\eta(\tau_n,
\xi_n,v^{\delta_n}(\tau_n,\xi_n))))-\varphi(y)}{D_y\eta(\tau_n,\xi_n,v^{\delta_n}
(\tau_n,\xi_n))},&\\\\
\left(\eta(\tau_n,\xi_n,v^{\delta_n}(\tau_n,\xi_n))-y\right)\left
[\langle\nabla\phi(\xi_n),p^n_v)\rangle
-\widetilde{g}(\tau_n,\xi_n,v^{\delta_n}(\tau_n,\xi_n))\right]&\\\\
+\frac{\psi_{\delta_n}(\bar{J}_{\delta_n}(\eta(\tau_n,
\xi_n,v^{\delta_n}(\tau_n,\xi_n))))-\psi(y)}{D_y\eta(\tau_n,\xi_n,v^{\delta_n}
(\tau_n,\xi_n))}&
\end{array}
\right)
\leq 0.
\end{eqnarray*}
Passing in the limit in the two below inequality, we get for all
$y\leq\eta(\tau,\xi,v(\tau,\xi))$, 
\begin{eqnarray*}
(V_{\widetilde{f}}(\tau,\xi,a_v,X_v,p_v)\leq -\frac{\varphi(\eta(\tau,\xi,v(\tau,\xi)))-\varphi(y)}{\eta(\tau,\xi,v(\tau,\xi))-y)}\frac{1}
{D_y\eta(\tau,\xi,v(\tau,\xi))},
\end{eqnarray*}
and
\begin{eqnarray*}
\min\left(
\begin{array}{ll}
V_{\widetilde{f}}(\tau,\xi,a_v,X_v,p_v)+\frac{\varphi(\eta(\tau,
\xi,v(\tau,\xi)))-\varphi(y)}{(\eta(\tau,\xi,v(\tau,\xi))-y)D_y\eta(\tau,\xi,v(\tau,\xi))},&\\\\
\left[\langle\nabla\phi(\xi), p_v)\rangle
-\widetilde{g}(\tau,\xi,v(\tau,\xi))\right]&\\\\
+\frac{\psi(\eta(\tau(\omega),
\xi,v(\tau,\xi)))-\psi(y)}{(\eta(\tau,\xi,v(\tau,\xi))-y)D_y\eta(\tau,\xi,v
(\tau,\xi))}&
\end{array}
\right)\leq 0.\label{V2}
\end{eqnarray*}

Taking the limit when $y$ goes to $\eta(\tau,\xi,v(\tau,\xi))$  we have

\begin{eqnarray*}
&&(V_{\widetilde{f}}(\tau,\xi,a_v,X_v,p_v)+\frac{\varphi'_l(\eta(\tau,\xi,v(\tau,\xi)))}
{D_y\eta(\tau,\xi,v(\tau,\xi))}\leq 0,\label{V1}
\end{eqnarray*}
and

\begin{eqnarray*}
\min\left(
\begin{array}{ll}
V_{\widetilde{f}}(\tau,\xi,a_v,X_v,p_v)+\frac{\varphi'_l(\eta(\tau,
\xi,v(\tau,\xi)))}{D_y\eta(\tau,\xi,v(\tau,\xi))}, &\\\\
\langle\nabla\phi(\xi),D_x\eta(\tau,\xi,v(\tau,\xi)p_v)\rangle
-\widetilde{g}(\tau,\xi,v(\tau,\xi))+\frac{\psi'_l(\eta(\tau,
\xi,v(\tau,\xi)))}{D_y\eta(\tau,\xi,v (\tau,\xi))}&
\end{array}
\right)\leq
0.
\end{eqnarray*}
which implies that $v$ satisfies \eqref{E:def2} and
\eqref{E:viscosity02}. Then, it follows from Corollary
\ref{corollary4.8} that $u$ is a stochastic viscosity subsolution of MSPDE \eqref{SPVI}. By
similar arguments, one can prove that $u$ is a stochastic viscosity
supersolution of MSPDE \eqref{SPVI} and completes the proof.
\end{proof}
\begin{remark}
In this work, we established existence and uniqueness result for variational generalized backward doubly stochastic differential equations under non-Lipchitz condition. Then using this result, we only gave a probabilistic representation of the viscosity solution of the variational stochastic partial differential equations with nonlinear Neumann condition. Since the study of uniqueness requires additional conditions, we promise to discuss this in a complementary article.
\end{remark}

{\bf Acknowledgements}

The work was partial done while the first author was visiting Shandong University and was finalized when the second author stayed at Anhui Normal University.
The first author would like to thank Prof. Shige Peng for providing a stimulating working environment. Next, the second one thank Prof. Yong Ren and all the administration of the Department of Mathematics for their support and  hospitality during his stay in March 2018.

\label{lastpage-01}
\end{document}